\newtheorem{thm}{Theorem}[section]
\newtheorem{prop}[thm]{Proposition}
\newtheorem{lem}[thm]{Lemma}
\newtheorem{cor}[thm]{Corollary}
\theoremstyle{definition}
\newtheorem{defn}[thm]{Definition}
\newtheorem{remk}[thm]{Remark}
\newtheorem{remks}[thm]{Remarks}
\newtheorem{exm}[thm]{Example}
\newtheorem{exms}[thm]{Examples}
\newtheorem{notat}[thm]{Notation}
\newtheorem{claim}[thm]{Claim}
\numberwithin{equation}{section}
\newcommand{\sC}{{\mathcal C}}
\newcommand{\sH}{{\mathcal H}}
\newcommand{\sM}{{\mathcal M}}
\newcommand{\sO}{{\mathcal O}}
\newcommand{\sR}{{\mathcal R}}
\newcommand{\sS}{{\mathcal S}}
\newcommand{\sV}{{\mathcal V}}
\newcommand{\sZ}{{\mathcal Z}}
\newcommand{\A}{{\mathbb A}}
\newcommand{\C}{{\mathbb C}}
\newcommand{\G}{{\mathbb G}}
\newcommand{\bL}{{\mathbb L}}
\newcommand{\N}{{\mathbb N}}
\renewcommand{\P}{{\mathbb P}}
\newcommand{\Q}{{\mathbb Q}}
\newcommand{\Z}{{\mathbb Z}}
\newcommand{\surj}{\twoheadrightarrow}
\newcommand{\inj}{\hookrightarrow}
\newcommand{\Hom}{{\rm Hom}}
\newcommand{\0}{\emptyset}
\newcommand{\ds}{{/\kern-3pt/}}
\newcommand{\un}{\underline}
\newcommand{\ov}{\overline}
\newcommand{\wt}{\widetilde}
\renewcommand{\dim}{\text{\rm dim}}
\newcommand{\tuborg}{\left\{\begin{array}{ll}}
\newcommand{\sluttuborg}{\end{array}\right.}
\begin{document}
\title{Equivariant cobordism of schemes}
\author{Amalendu Krishna}
\address{School of Mathematics, Tata Institute of Fundamental Research,  
Homi Bhabha Road, Colaba, Mumbai, India}
\email{amal@math.tifr.res.in}

\baselineskip=10pt 
  
\keywords{Algebraic cobordism, group actions}        

\subjclass[2010]{Primary 14C25; Secondary 19E15}
\begin{abstract}
Let $k$ be a field of characteristic zero. For a linear algebraic group $G$ 
over $k$ acting on a $k$-scheme $X$, we define the equivariant algebraic 
cobordism of $X$, which extends the definition of equivariant cobordism of 
smooth schemes in \cite{DD} to all schemes. Using a refined version of the 
Levine-Morel localization, we establish the localization sequence for the
equivariant cobordism for all $G$-schemes. We explicitly describe the relation
of equivariant cobordism with equivariant Chow groups, $K$-groups and
complex cobordism. 

We show that the rational equivariant cobordism of a
$G$-variety can be expressed as the Weyl group invariants of the equivariant
cobordism for the action of a maximal torus of $G$.  
As applications, we show that the rational algebraic 
cobordism of the classifying space of a complex algebraic group
is isomorphic to its complex cobordism. 
\end{abstract}
\maketitle

\section{Introduction}
Let $k$ be a field of characteristic zero. 
Based on the construction of the motivic algebraic cobordism spectrum
$MGL$ by Voevodsky, Levine and Morel \cite{LM} gave a geometric 
construction of the algebraic cobordism and showed that this is a universal 
oriented Borel-Moore homology theory in the category of varieties over the 
field $k$. Their definition was extended by Deshpande \cite{DD} in the
equivariant set-up that led to the notion of the equivariant cobordism
of smooth varieties acted upon by linear algebraic groups. This in particular
allowed one to define the algebraic cobordism of the classifying spaces
analogous to their complex cobordism. 

Apart from its many applications 
in the equivariant set up which are parallel to the ones in the
non-equivariant world, an equivariant cohomology theory often leads to
the description of the corresponding non-equivariant cohomology by mixing the 
geometry of the variety with the representation theory of the underlying groups.

Our aim in this first part of a series of papers is to develop the  
theory of equivariant cobordism in the category of all $k$-schemes
with action of a linear algebraic group. We establish the fundamental
properties of this theory and give applications. In the second part 
\cite{Krishna5} of this series, we shall give other important applications of 
the results of this paper. Some further applications of the results of this
paper to the computation of the non-equivariant cobordism rings appear in
\cite{Krishna3} and \cite{KU}. 
We now describe some of the main results in of this paper. 

Let $G$ be a linear algebraic group over $k$. In this paper, a scheme will
mean a quasi-projective $k$-scheme and all $G$-actions will be assumed to be 
linear. If $X$ is a smooth scheme with a $G$-action, Deshpande defined the
equivariant cobordism $\Omega^G_*(X)$ using the coniveau filtration on
the Levine-Morel cobordism of certain smooth mixed spaces.
This was based on the construction of the Chow groups of classifying spaces
in \cite{Totaro1} and the equivariant Chow groups in \cite{EG}.
 
Using a {\sl niveau} filtration on the algebraic cobordism, which is based
on the analogous filtration on any Borel-Moore homology theory as 
described in \cite[Section~3]{BO}, we define the equivariant algebraic
cobordism of any $k$-scheme with $G$-action in Section~\ref{section:EC}.
This is defined by taking a projective limit over the quotients of the 
Levine-Morel cobordism of certain mixed spaces by various levels of the niveau 
filtration. In order to make sense of this construction, one needs to prove
various properties of the above niveau filtration which is done in
Section~\ref{section:NIV}. These equivariant cobordism groups coincide with
the one in \cite{DD} for smooth schemes. 
We also show in Section~5 how one can recover the formula for the cobordism 
group of certain classifying spaces directly from the above definition, by 
choosing suitable models for the underlying mixed spaces.

The first main result on these equivariant cobordism groups is to show 
that they satisfy the localization exact sequence, a fundamental property
of any cohomology theory. For smooth schemes, a version
of the localization sequence was proven in \cite[Theorem~7.3]{DD}. However,
the proof given there is not complete ({\sl cf.} Remark~\ref{remk:incomplete}).
The problem stems from the fact that the projective limit is not in 
general a right exact functor. Hence to prove the equivariant version of
the localization sequence, one needs a stronger version of the Levine-Morel
localization sequence involving the above niveau filtration.
We prove this refined localization sequence in Theorem~\ref{thm:RLS}  which is
then used to give the complete proof of the equivariant localization sequence in
Theorem~\ref{thm:ELS}. We prove the other expected properties
such as functoriality, homotopy invariance, exterior product,
projection formula and existence of Chern classes for equivariant vector 
bundles in Theorem~\ref{thm:Basic}.

In Section~7, we show how the equivariant cobordism is related to other
equivariant cohomology theories such as equivariant Chow groups,
equivariant $K$-groups and equivariant complex cobordism.
Using some properties of the niveau filtration and
known relation between the non-equivariant cobordism and Chow groups,
we deduce an explicit formula ({\sl cf.} Proposition~\ref{prop:CBCH})
which relates the equivariant cobordism and the equivariant Chow groups of 
$k$-varieties. Using this and the main results of \cite{Krishna}, we
give a formula in Theorem~\ref{thm:CKtheory} which relates the equivariant 
cobordism with the equivariant $K$-theory of smooth schemes.
We also construct a natural transformation from the algebraic to the
equivariant version the complex cobordism for varieties over the field
of complex numbers. 

Our next main result of this paper is Theorem~\ref{thm:W-inv}, where
we show that for a connected linear algebraic group $G$ acting on a scheme $X$,
there is a canonical isomorphism $\Omega^G_*(X) \xrightarrow{\cong}
\left(\Omega^T_*(X)\right)^W$ with rational coefficients, where $T$ is a split 
maximal torus of a Levi subgroup of $G$ with Weyl group $W$. 
This is mainly achieved by the Morita isomorphism of
Proposition~\ref{prop:Morita} and a detour to the motivic cobordism
$MGL$ and its extension $MGL'$ to singular schemes by Levine \cite{Levine1}.
This motivic cobordism $MGL'$ comes into play due to the advantage that
it is a bigraded Borel-Moore homology which has long exact localization
sequences. What helps us in using this theory in our context is the recent
comparison result of Levine \cite{Levine2} which shows that the Levine-Morel
cobordism theory is a piece of the more general $MGL'$-theory.

As an easy consequence of Proposition~\ref{prop:CBCH}, we recover 
Totaro's cycle class map ({\sl cf.} \cite{Totaro1}) 
\[
CH^*(BG) \to MU^*(BG) \otimes_{\bL} \Z \to H^*(BG)
\]
for a complex linear algebraic group $G$. 
It is conjectured that this map is an isomorphism
of rings. This conjecture has been shown to be true by Totaro for some
classical groups such as $BGL_n$, $O_n$, $Sp_{2n}$ and $SO_{2n+1}$.
Although, we can not say anything about this conjecture here, we do show
as a consequence of  Theorem~\ref{thm:W-inv} that the map 
$CH^*(BG) \to MU^*(BG) \otimes_{\bL} \Z$ is indeed an isomorphism of rings
with the rational coefficients (see Theorem~\ref{thm:ACC*} for the full
statement). We do this by first showing that there is a natural ring
homomorphism $\Omega^*(BG) \to MU^*(BG)$ (with integer coefficients)
which lifts Totaro's map. We then show that this map is in fact an 
isomorphism with rational coefficients using Theorem~\ref{thm:W-inv}.

As some more applications of Theorem~\ref{thm:W-inv}, we aim to compute the 
cobordism ring of certain spherical varieties in \cite{KK}.  

\section{Recollection of algebraic cobordism}\label{section:AC}
In this section, we briefly recall the definition of algebraic cobordism
of Levine-Morel. We also recall the other definition of this object as given
by Levine-Pandharipande. Since we shall be concerned with the study of 
schemes with group actions and the associated quotient schemes, and since 
such quotients often require the original scheme to be quasi-projective,
we shall assume throughout this paper that all schemes over $k$ are 
quasi-projective. 
\\

\noindent
{\bf Notations.} We shall denote the category of quasi-projective $k$-schemes 
by $\sV_k$. By a scheme, we shall mean an object 
of $\sV_k$. The category of smooth quasi-projective schemes
will be denoted by $\sV^S_k$. If $G$ is a linear algebraic group over $k$, we 
shall denote the category of quasi-projective $k$-schemes with a $G$-action
and $G$-equivariant maps by $\sV_G$. The associated category of smooth
$G$-schemes will be denoted by $\sV_G^S$. All $G$-actions in this paper will be
assumed to be linear. Recall that this means that all $G$-schemes are
assumed to admit $G$-equivariant ample line bundles. This assumption is
always satisfied for normal schemes ({\sl cf.} \cite[Theorem~2.5]{Sumihiro}, 
\cite[5.7]{Thomason1}).

\subsection{Algebraic cobordism}
Before we define the algebraic cobordism, we recall the Lazard ring $\bL$.
It is a polynomial ring over $\Z$ on infinite but countably many variables and 
is given by the quotient of the polynomial ring $\Z[A_{ij}| (i,j) \in \N^2]$ 
by the relations, which uniquely define the universal formal group law 
$F_{\bL}$ of rank one on $\bL$. This formal group law is given by the power 
series
\[
F_{\bL}(u,v) = u + v + {\underset {i,j \ge 1} \sum} a_{ij} u^iv^j,
\]
where $a_{ij}$ is the equivalence class of $A_{ij}$ in the ring $\bL$.
The Lazard ring is graded by setting the degree of $a_{ij}$ to be $1-i-j$.
In particular, one has $\bL_{0} = \Z, \bL_{-1} = \Z a_{11}$ and $\bL_{i} = 0$
for $i \ge 1$, that is, $\bL$ is non-positively graded. We shall write $\bL_*$
for the graded ring such that $\bL_{*,i} = \bL_{-i}$ for $i \in \Z$.
We now define the algebraic cobordism of Levine and Morel \cite{LM}.
 
Let $X$ be an equi-dimensional $k$-scheme. A cobordism cycle over $X$ is
a family $\alpha = [Y \xrightarrow{f} X, L_1, \cdots , L_r]$, where $Y$ is a
smooth scheme, the map $f$ is projective, and $L_i$'s are line bundles on $Y$.
Here, one allows the set of line bundles to be empty.
The degree of such a cobordism cycle is  defined to be ${\rm deg}(\alpha) =
{\rm dim}_k(Y)-r$ and its codimension is defined to be ${\rm dim}(X) -
{\rm deg}(\alpha)$. Let $\sZ^*(X)$ be the free abelian group generated by the 
cobordism cycles of the above type. Note that this group is graded by the
codimension of the cycles. In particular, for $j \in \Z$, $\sZ^j(X)$
is the free abelian group on cobordism cycles $\alpha =
[Y \xrightarrow{f} X, L_1, \cdots , L_r]$, where $Y$ is smooth and irreducible
and codimension of $\alpha$ is $j$.   

We impose several relations on $\sZ^*(X)$ in order to define the algebraic
cobordism group. The first among these is the so called 
{\sl dimension axiom}: let $\sR^*_{\rm dim}(X)$ be the graded 
subgroup of $\sZ^*(X)$ generated by the cobordism cycles $\alpha = 
[Y \xrightarrow{f} X, L_1, \cdots , L_r]$ such that ${\rm dim}_k Y < r$.
Let 
\[
{\sZ}^*_{\rm dim}(X) = \frac{\sZ^*(X)}{\sR^*_{\rm dim}(X)}.
\]
For a line bundle $L$ on $X$ and cobordism cycle $\alpha$ as above, we define 
the Chern class operator on
${\sZ}^*_{\rm dim}(X)$ by letting $c_1(L)(\alpha) = [Y \xrightarrow{f} X,
L_1, \cdots , L_r, f^*(L)]$. 
 
Next, we impose the so called {\sl section axiom}. Let $\sR^*_{\rm sec}(X)$ be 
the graded subgroup of ${\sZ}^*_{\rm dim}(X)$ generated
by cobordism cycles of the form $[Y \to X, L] - [Z \to X]$, where
$Y \xrightarrow{s} L$ is a section of the line bundle $L$ on $Y$ which is
transverse to the zero-section, and $Z \inj Y$ is the closed subvariety of
$Y$ defined by the zeros of $s$. The transversality of $s$ ensures that 
$Z$ is a smooth variety. In particular, $[Z \to X]$ is a well-defined
cobordism cycle on $X$. Define 
\[
\un{\Omega}^*(X) = \frac{{\sZ}^*_{\rm dim}(X)}{\sR^*_{\rm sec}(X)}.
\]
The assignment $X \mapsto \un{\Omega}^*(X)$ is called the pre-cobordism
theory.

Finally, we impose the {\sl formal group law} on the cobordism using the 
following relation. For $X$ as above, let
$\sR^*_{\rm FGL}(X) \subset \bL {\otimes}_{\Z} \un{\Omega}^*(X)$ be the graded 
$\bL$-submodule generated by elements of the form 
\[
\left\{F_{\bL}\left(c_1(L), c_1(M)\right) (x) - c_1(L \otimes M) (x)|
x \in \un{\Omega}^*(X), L, M \in {\rm Pic}(X)\right\}.
\]
We define the {\sl algebraic cobordism} group of $X$ by
\begin{equation}\label{eqn:CB}
\Omega^*(X) = \frac{\bL {\otimes}_{\Z} \un{\Omega}^*(X)}{\sR^*_{\rm FGL}(X)}.
\end{equation}

If $X$ is not necessarily equi-dimensional, we define $\sZ_*(X)$ to be same as
$\sZ^*(X)$ except that $\sZ_*(X)$ is now graded by the degree of the cobordism
cycles. In particular, $\sZ_i(X)$ is the free abelian group on cobordism
cycles $[Y \xrightarrow{f} X, L_1, \cdots , L_r]$ such that $f$ is projective
and $Y$ is smooth and irreducible such that ${\rm dim}(Y) - r = i$.
One then defines $\Omega_*(X)$ to be the quotient of
$\bL_* \otimes_{\Z} \un{\Omega}_*(X)$ in the same way as above. 
Note that for $X$ equi-dimensional of dimension $d$ and $i \in \Z$, one has 
$\Omega^i(X) \cong \Omega_{d-i}(X)$.

Observe that $\Omega^*(X)$ is a graded $\bL$-module such that $\Omega^j(X)
= 0$ for $j > {\rm dim}(X)$ and $\Omega^j(X)$ can be non-zero for any given
$-\infty < j \le {\dim}(X)$. Similarly, $\Omega_*(X)$ is a graded 
$\bL_*$-module which has no component in the negative degrees and it can be
non-zero in arbitrarily large positive degree.
  
The following is the main result of Levine and Morel from which most of their
other results on algebraic cobordism are deduced. We refer to 
{\sl loc. cit.} for more properties.
\begin{thm}\label{thm:Levine-M}
The functor $X \mapsto \Omega_*(X)$ is the universal Borel-Moore homology
on the category $\sV_k$. In other words, it is universal among the homology
theories on $\sV_k$ which have functorial push-forward for projective 
morphism, pull-back for smooth morphism (any morphism of smooth schemes), 
Chern classes for line bundles, and which satisfy Projective bundle formula, 
homotopy invariance, the above dimension, section and formal group law axioms.
Moreover, for a $k$-scheme $X$ and closed subscheme $Z$ of $X$  with open 
complement $U$, there is a localization exact sequence
\[
\Omega_{*}(Z) \to \Omega_*(X) \to \Omega_*(U) \to 0.
\]
\end{thm}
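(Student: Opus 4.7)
The plan is to follow the strategy of Levine--Morel, since this is the main structural result of their monograph. The construction of $\Omega_*$ has already been carried out as an iterated quotient of the free abelian group $\sZ_*(X)$ on cobordism cycles (first by the dimension axiom, then by the section axiom, finally by the formal group law relations after tensoring with $\bL$). Universality is therefore essentially built into the definition: given any Borel--Moore homology $A_*$ on $\sV_k$ with the listed structure, one assigns to a cobordism cycle $[Y \xrightarrow{f} X, L_1, \ldots, L_r]$ the element $f_*\bigl(c^A_1(L_1)\cdots c^A_1(L_r)([Y]_A)\bigr) \in A_*(X)$, and must check that this factors through each of the three layers of relations. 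The main work is to verify that $\Omega_*$ itself supports all of the listed operations in a way compatible with the relations, so that $\Omega_* \to A_*$ is a morphism of oriented Borel--Moore theories.

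For the first step, projective pushforward is manifest from the cycle presentation. Smooth pullback requires a transversality argument to ensure that the pulled-back line bundles and smooth source scheme yield a well-defined cycle; compatibility with the section axiom then follows from generic smoothness in characteristic zero. Chern classes of line bundles act by appending to the list $(L_1,\ldots,L_r)$, and the dimension axiom makes this well defined. Homotopy invariance and the projective bundle formula are then deduced by explicit computation using the formal group law on $\bL$. Conversely, for any theory $A_*$ with the listed structure, the map $\sZ_*(X) \to A_*(X)$ above descends through the dimension axiom because $c_1^A(L_1)\cdots c_1^A(L_r)([Y]_A)$ lives in $A_{\dim Y - r}(Y)$, which vanishes when $\dim Y < r$; through the section axiom because $c_1^A(L)$ applied to $[Y]_A$ equals $i_*[Z]_A$ for a transverse section; and through $\sR^*_{\rm FGL}$ by hypothesis on $A_*$.

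For the localization sequence, surjectivity of $\Omega_*(X) \to \Omega_*(U)$ is the more approachable half: given $[Y \xrightarrow{f} U, L_1, \ldots, L_r]$ with $Y$ smooth, choose any projective closure $\bar Y \to X$ of $Y \to U$, apply resolution of singularities to obtain a smooth projective $\tilde Y \to X$ restricting to $Y \to U$ over $U$, and extend each $L_i$ to a line bundle on $\tilde Y$ (possible after further blowing up). The resulting class in $\Omega_*(X)$ maps to the original class in $\Omega_*(U)$, modulo relations supported on $Z$.

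The hard part will be exactness in the middle: if a class $\alpha \in \Omega_*(X)$ vanishes after restriction to $U$, one must show that $\alpha$ comes from $\Omega_*(Z)$. The difficulty is that the relations used to kill $\alpha|_U$ in $\Omega_*(U)$ need not extend tautologically to relations on $X$; one must lift each dimension, section, and FGL relation from $U$ to $X$ modulo cycles supported on $Z$. This requires the careful moving-type arguments of Levine--Morel, leveraging resolution of singularities and the explicit presentation of $\un{\Omega}_*$ to match any cobordism relation on $U$ against a relation on $X$ up to cycles pushed forward from $Z$. This is the most technical component of the Levine--Morel theory, and it is exactly the aspect that will need to be refined in Section 3 to produce a localization sequence strong enough to pass to the projective limits defining the equivariant cobordism groups.
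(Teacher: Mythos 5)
This is a theorem that the paper does not prove at all: it is Theorem~\ref{thm:Levine-M} in Section~\ref{section:AC}, which is explicitly presented as a recollection of the main results of Levine and Morel's monograph \cite{LM}, with the localization exact sequence also cited from there. The paper's text immediately preceding the statement reads ``The following is the main result of Levine and Morel from which most of their other results on algebraic cobordism are deduced. We refer to {\sl loc.\ cit.}\ for more properties.'' No proof is given or attempted in the paper, so there is no internal argument to compare your proposal against.

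That said, as a high-level sketch of the Levine--Morel strategy your outline captures the right skeleton: universality factors through the three layers of relations, and the localization sequence splits into an easier surjectivity statement and a harder ``exactness in the middle'' statement requiring genuine moving arguments. Two cautions if you intend to flesh this out. First, in your surjectivity argument you cannot in general extend the line bundles $L_i$ from $Y$ to a projective closure $\tilde Y$ even after blowing up; the cleaner route (and closer to what Levine--Morel actually do, and what this paper uses in the proof of Theorem~\ref{thm:RLS}) is to invoke the generalized degree formula to write a class in $\Omega_*(U)$ as an $\bL_*$-linear combination of classes $[\tilde U_l \to U]$ with no line bundles, and then close up and resolve. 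Second, the middle exactness in Levine--Morel is genuinely involved and is not a routine lifting of relations; you correctly flag this as the technical core, but the present paper's contribution (Theorem~\ref{thm:RLS}) is a refinement of that result respecting the niveau filtration, not a reproof of it, and it takes as an input a specific intermediate step of Levine--Morel's proof, namely the surjectivity in~\eqref{eqn:RLS2} at the level of $\sZ_*$.
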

It was also shown in {\sl loc. cit.} that the natural composite map
\[
\Phi : \bL \to \bL \otimes_{\Z} \un{\Omega}^*(k) \surj \Omega^*(k)
\]
\[
a \mapsto [a]
\]
is an isomorphism of commutative graded rings.

As an immediate corollary of Theorem~\ref{thm:Levine-M}, we see that for
a smooth $k$-scheme $X$ and an embedding $\sigma : k \to \C$, there is a 
natural morphism of graded rings
\begin{equation}\label{eqn:A-C-C}
\Phi^{top}_X : \Omega^*(X) \to MU^{2*}(X_{\sigma}(\C)),
\end{equation} 
where $MU^*(X_{\sigma}(\C))$ is the complex cobordism ring of the complex
manifold $X_{\sigma}(\C)$ given by the complex points of $X \times_k \C$.
This map is an isomorphism for $X = {\rm Spec}(k)$.
In particular, there are isomorphisms of graded rings
\begin{equation}\label{eqn:A-C-C*}
\bL \xrightarrow{\cong} \Omega^*(k) \xrightarrow{\cong} 
MU^{2*} \xrightarrow{\cong} MU^*,
\end{equation}
where $MU^*$ is the complex cobordism ring of a point.
As a corollary, we see that for any field extension $k \inj K$, the 
natural map $\Omega^*(k) \to \Omega^*(K)$ is an isomorphism.

\subsection{Cobordism via double point degeneration}
To enforce the formal group law on the algebraic
cobordism in order to make it an oriented cohomology theory on the
category of smooth varieties, Levine and Morel artificially imposed this
condition by tensoring their pre-cobordism theory with the Lazard ring.
Although they were able to show that the resulting map
$\sZ_*(X) \to \Omega_*(X)$ is still surjective, they were unable to
describe the explicit geometric relations in $\sZ_*(X)$ that define 
$\Omega_*(X)$. This was subsequently accomplished by Levine-Pandharipande
\cite{LP}. We conclude our introduction to the algebraic cobordism by briefly
discussing the construction of Levine-Pandharipande.
For $n \ge 1$, let $\square^n$ denote the space ${(\P^1_k - \{1\})}^n$.

\begin{defn}\label{defn:DPD}
A morphism $Y \xrightarrow{\pi} \square^1$ is called a {\sl double point   
degeneration}, if $Y$ is a smooth scheme and ${\pi}^{-1}(0)$ is 
scheme-theoretically given as the union $A \cup B$, where $A$ and $B$
are smooth divisors on $Y$ which intersect transversely. 
The intersection $D = A \cap B$ is called the double point locus of $\pi$.
Here, $A$, $B$ and $D$ are allowed to be disconnected or, even empty.
\end{defn}

For a double point degeneration as above, notice that the scheme $D$ is 
also smooth and $\sO_D(A+B)$ is trivial. In particular, one sees that
$N_{A/D} \otimes_D N_{B/D} \cong \sO_D$. This is turn implies that the 
projective bundles $\P(\sO_D \oplus N_{A/D}) \to D$ and
$\P(\sO_D \oplus N_{B/D}) \to D$ are isomorphic, where 
$N_{A/D}$ and $N_{B/D}$ are the normal bundles of $D$ in $A$ and
$B$ respectively. Let $\P(\pi) \to D$ denote any of these two projective
bundles.

Let $X$ be a $k$-scheme and let $Y \xrightarrow{f} X \times \square^1$ be
a projective morphism from a smooth scheme $Y$. Assume that the composite
map $\pi : Y \to X \times \square^1 \to \square^1$ is a double point 
degeneration such that $Y_{\infty} = {\pi}^{-1}(\infty)$ is smooth.
We define the cobordism cycle on $X$ associated to the morphism $f$ to be the 
cycle
\begin{equation}\label{eqn:C-cycle}
C(f) = [Y_{\infty} \to X] - [A \to X] - [B \to X] + [\P(\pi) \to X].
\end{equation}
Let $\sM_*(X)$ be the free abelian group on the isomorphism classes of the
morphisms $[Y \xrightarrow{f} X]$, where $Y$ is smooth and irreducible and
$f$ is projective. Then ${\sM}_*(X)$ is a graded abelian group, where the 
grading is by the dimension of $Y$. Let ${\sR}_*(X)$ be the subgroup
of $\sM_*(X)$ generated by all cobordism cycles $C(f)$, where $C(f)$ is as in
~\eqref{eqn:C-cycle}. Note that ${\sR}_*(X)$ is a graded subgroup of 
${\sM}_*(X)$. Define
\begin{equation}\label{eqn:C-cycle*} 
\omega_*(X) = \frac{{\sM}_*(X)}{{\sR}_*(X)}.
\end{equation}
\begin{thm}[\cite{LP}]\label{thm:Levine-P}
There is a canonical isomorphism 
\begin{equation}\label{eqn:C-cycle*1} 
\omega_*(X) \xrightarrow{\cong} \Omega_*(X)
\end{equation}
of oriented Borel-Moore homology theories on $\sV$.
\end{thm}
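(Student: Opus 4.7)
The plan is to construct a canonical map $\Theta: \omega_*(X) \to \Omega_*(X)$ sending $[Y \xrightarrow{f} X] \in \omega_*(X)$ to the Levine-Morel cobordism cycle $[Y \xrightarrow{f} X]$ with empty list of line bundles, to verify well-definedness by proving a double point formula in $\Omega_*$, and then to invert $\Theta$ by endowing $\omega_*$ with the structure of an oriented Borel-Moore homology theory and appealing to the universal property in \thmref{thm:Levine-M}.

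Well-definedness of $\Theta$ amounts to proving the double point relation
\[
[Y_\infty \to X] - [A \to X] - [B \to X] + [\P(\pi) \to X] \;=\; 0
\]
in $\Omega_*(X)$ for every double point degeneration $\pi: Y \to \P^1$ over $X$ with $Y_\infty$ smooth. The argument combines the section axiom, applied to $Y_\infty$ viewed as the zero scheme of a transverse section of $\pi^*\sO_{\P^1}(1)$, with the formal group law applied to the isomorphism $\sO_Y(Y_\infty) \cong \sO_Y(A) \otimes \sO_Y(B)$: the non-transverse decomposition $A \cup B$ of the other fiber produces correction terms from $F_{\bL}$ which, after restriction to the smooth intersection $D = A \cap B$ where the identity $N_{A/D} \otimes N_{B/D} \cong \sO_D$ holds, telescope into the class of the common projective bundle $\P(\pi) \to D \to X$. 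These explicit computations are carried out in \cite{LM}.

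For the inverse, one must equip $\omega_*$ with the full oriented BM structure. Projective pushforward $g_*[Y \to X] = [Y \to X']$ and smooth pullback via fiber product are formal and preserve the double point relations $\sR_*$. Homotopy invariance and the projective bundle formula are verified on generators by standard deformation arguments. The most delicate step is the construction of first Chern class operators $c_1(L): \omega_*(X) \to \omega_{*-1}(X)$ for arbitrary line bundles $L$: since a transverse section of $f^*L$ need not exist, one must employ Bertini-type moving lemmas to replace $f: Y \to X$ by an equivalent cycle in $\omega_*$ for which transversality can be arranged, with double point degenerations serving as the mechanism that makes such replacements immaterial modulo $\sR_*$. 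The formal group law on $\omega_*$ is then extracted by a further degeneration argument on $\P^1 \times \P^1$ which encodes the geometric origin of $F_{\bL}$. Once this structure is in place, \thmref{thm:Levine-M} produces a canonical natural transformation $\Psi: \Omega_*(X) \to \omega_*(X)$, and one checks $\Theta \circ \Psi = \id$ and $\Psi \circ \Theta = \id$ on the common generators. The chief obstacle is precisely the construction of $c_1(L)$ on $\omega_*$: in the Levine-Morel presentation the extra data $L_1, \ldots, L_r$ absorb the failure of section transversality, whereas in the Levine-Pandharipande presentation this information must be recovered entirely from geometric degenerations, requiring the moving lemmas that lie at the technical heart of \cite{LP}.
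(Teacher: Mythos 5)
The paper does not prove this theorem; it is simply cited from Levine--Pandharipande \cite{LP}, so there is no in-text proof to compare against. Your sketch, however, tracks the actual LP strategy closely: the forward map $\Theta$ with well-definedness reduced to the double point relation in $\Omega_*$, established via the section axiom (expressing $[Y_\infty\to X]$ through $c_1(\sO_Y(Y_\infty))$ applied to the fundamental class) and the formal group law applied to $\sO_Y(Y_\infty)\cong\sO_Y(A)\otimes\sO_Y(B)$, with the tail of the series supported on $D=A\cap B$ collapsing into $\pm[\P(\pi)\to X]$ thanks to $N_{A/D}\otimes N_{B/D}\cong\sO_D$; and the inverse by equipping $\omega_*$ with the structure of an oriented Borel--Moore homology theory and invoking the universality of Theorem~\ref{thm:Levine-M}. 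You have also correctly located the technical center of gravity of \cite{LP} in the construction of first Chern class operators on $\omega_*$ and the attendant moving and degeneration lemmas. Two minor remarks: the base of the degeneration is $\square^1=\P^1\setminus\{1\}$ rather than $\P^1$ itself, though the $\sO(1)$-section argument persists after restriction; and once $\Theta\circ\Psi=\id$ is forced by universality of $\Omega_*$, the second identity $\Psi\circ\Theta=\id$ is cleanest if one observes that any natural transformation of oriented Borel--Moore theories commutes with projective pushforward and preserves fundamental classes, so $\Psi([Y\xrightarrow{f}X])=\Psi(f_*(1_Y))=f_*(1_Y)=[Y\to X]$, which closes the circle on generators exactly as you indicate.
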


\section{Niveau filtration on algebraic cobordism}\label{section:NIV}
In this section, we introduce the niveau filtration on the algebraic cobordism 
which plays an important role in the definition of the equivariant
algebraic cobordism. Our main result here is a refined localization sequence
for the cobordism which preserves the niveau filtration. This new localization
sequence will have interesting consequences in the study of the
equivariant cobordism.

Let $X$ be a $k$-scheme of dimension $d$. For $j \in \Z$, let $Z_j$ be the
set of all closed subschemes $Z \subset X$ such that ${\rm dim}_k(Z) \le j$
(we assume ${\rm dim}(\0) = - \infty$). The set $Z_j$ is then ordered by 
the inclusion. For $i \ge 0$,  we define
\[
\Omega_i(Z_j) = {\underset{Z \in Z_j} \varinjlim} \Omega_i(Z) \ \ {\rm and} 
\ \ {\rm put} 
\]
\[
\Omega_*(Z_j) = {\underset{i \ge 0} \bigoplus} \ \Omega_i(Z_j).
\]
It is immediate that $\Omega_*(Z_j)$ is a graded $\bL_*$-module and there is
a graded $\bL_*$-linear map $\Omega_*(Z_j) \to \Omega_*(X)$.

Following \cite[Section~3]{BO}, we let ${Z_j}/{Z_{j-1}}$ denote the ordered set
of pairs $(Z, Z') \in Z_j \times Z_{j-1}$ such that $Z' \subset Z$ with the
ordering 
\[
(Z, Z') \ge (Z_1, Z_1') \ \ {\rm if} \ \ Z_1 \subseteq Z \ \ {\rm and} \ \
Z_1' \subseteq Z'.
\]
We let 
\[
\Omega_*\left({Z_j}/{Z_{j-1}}(X)\right) := 
{\underset{(Z, Z') \in {Z_j}/{Z_{j-1}}} \varinjlim} \Omega_i(Z-Z').
\]
\begin{lem}\label{lem:Niv*1}
For $f : X' \to X$ projective, the push-forward map $\Omega_*(X') 
\xrightarrow{f_*} \Omega_*(X)$ induces a push-forward map
$\Omega_*\left({Z_j}/{Z_{j-1}}(X')\right) \to 
\Omega_*\left({Z_j}/{Z_{j-1}}(X)\right)$.
\end{lem}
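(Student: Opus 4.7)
The plan is to construct the pushforward first on each individual term $\Omega_*(Z - Z'')$ of the defining colimit, and then verify compatibility with the transition maps of both directed systems so as to pass to the colimit.

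First, I would check that the assignment $(Z, Z'') \mapsto (f(Z), f(Z''))$, where each image is endowed with its reduced induced closed subscheme structure in $X$, defines an order-preserving map from $Z_j/Z_{j-1}(X')$ to $Z_j/Z_{j-1}(X)$: since $f$ is projective, images of closed subsets are closed, dimensions can only drop, and $Z'' \subseteq Z$ forces $f(Z'') \subseteq f(Z)$. The restriction of $f$ to $Z$ (respectively to $Z''$) is then a projective morphism onto $f(Z)$ (respectively $f(Z'')$), yielding pushforwards $f_* : \Omega_*(Z) \to \Omega_*(f(Z))$ and $f_* : \Omega_*(Z'') \to \Omega_*(f(Z''))$.

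Next, I would apply the Levine--Morel localization sequence of \thmref{thm:Levine-M} to both pairs to obtain
\[
\Omega_*(Z'') \to \Omega_*(Z) \to \Omega_*(Z - Z'') \to 0, \qquad \Omega_*(f(Z'')) \to \Omega_*(f(Z)) \to \Omega_*(f(Z) - f(Z'')) \to 0.
\]
By functoriality of projective pushforward, the two composites $Z'' \hookrightarrow Z \xrightarrow{f} f(Z)$ and $Z'' \xrightarrow{f} f(Z'') \hookrightarrow f(Z)$ agree, so $f_*\bigl(\Omega_*(Z'')\bigr)$ lies in the image of $\Omega_*(f(Z'')) \to \Omega_*(f(Z))$. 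Hence the composition $\Omega_*(Z) \xrightarrow{f_*} \Omega_*(f(Z)) \surj \Omega_*(f(Z) - f(Z''))$ kills the image of $\Omega_*(Z'')$, and the universal property of the right-exact first sequence produces a unique induced map $\Omega_*(Z - Z'') \to \Omega_*(f(Z) - f(Z''))$.

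Finally, to pass to the colimit I would check compatibility with the transition maps: for $(Z, Z'') \le (Z_1, Z_1')$ in $Z_j/Z_{j-1}(X')$, the transition maps on both sides are built in the same way from projective pushforwards along closed immersions together with the right-exactness of the same localization sequence, and the resulting square commutes because the two pushforward composites $Z \hookrightarrow Z_1 \xrightarrow{f} f(Z_1)$ and $Z \xrightarrow{f} f(Z) \hookrightarrow f(Z_1)$ agree. The main, and only mild, technical obstacle is that \thmref{thm:Levine-M} provides only a \emph{right-exact} localization sequence, so every induced map must be obtained from the universal property of a quotient rather than from any left-exactness; the choice of reduced structure on the images is harmless because the surjection $\Omega_*(Y_{\rm red}) \surj \Omega_*(Y)$ (another consequence of the same sequence) makes $\Omega_*$ insensitive to it.
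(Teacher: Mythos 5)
Your argument is correct and is essentially the proof in the paper: map $(Z,Z'')$ to $({\rm Im}(Z),{\rm Im}(Z''))$, invoke functoriality of projective push-forward to make the left square of the two localization sequences commute, and pass to the cokernels; you simply spell out the compatibility with the transition maps and the (harmless) choice of scheme structure on the image, which the paper leaves implicit in its use of the word ``natural.''
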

\begin{proof}
Let $(Z, Z') \in {Z_j}/{Z_{j-1}}(X')$. Then $(W, W') = ({\rm Im}(Z),
{\rm Im}(Z')) \in {Z_j}/{Z_{j-1}}(X)$. It suffices now to show that
$f_*$ induces a natural map $\Omega_*(Z-Z') \to \Omega_*(W-W')$.
However, this follows directly from the localization exact sequences
\[
\xymatrix@C.5pc{
\Omega_*(Z') \ar[r] \ar[d]_{f_*} & \Omega_*(Z) \ar[r] \ar[d]^{f_*} &
\Omega_*(Z- Z') \ar[r] \ar@{.>}[d] & 0 \\
\Omega_*(W') \ar[r] & \Omega_*(W) \ar[r] &
\Omega_*(W- W') \ar[r] & 0}
\]
and the fact that the square on the left is commutative.
\end{proof}
For $x \in Z_j$, let 
\begin{equation}\label{eqn:Niv*2} 
\widetilde{\Omega_*(k(x))} = {\underset{U \subseteq \ov{\{x\}}} \varinjlim} 
\Omega_*(U),
\end{equation}
where the limit is taken over all non-empty open subsets of $\ov{\{x\}}$.
Taking the limit over the localization sequences
\[
\Omega_*(Z') \to \Omega_*(Z) \to \Omega_*(Z-Z') \to 0
\]
for $(Z, Z') \in {Z_j}/{Z_{j-1}}$, one now gets an exact sequence 
\begin{equation}\label{eqn:Niv*3} 
\Omega_*(Z_{j-1}) \to \Omega_*(Z_j) \to 
{\underset{x \in  (Z_j - Z_{j-1})} \bigoplus} \widetilde{\Omega_*(k(x))} \to 0.
\end{equation}

\begin{defn}\label{defn:niveau}
We define $F_j\Omega_*(X)$ to be the image of the natural $\bL_*$-linear map
$\Omega_*(Z_j) \to \Omega_*(X)$. In other words, $F_j\Omega_*(X)$ is the image
of all $\Omega_*(W) \to \Omega_*(X)$, where $W \to X$ is a projective map
such that ${\rm dim}({\rm Image}(W)) \le j$.
\end{defn}
One checks at once that there is a canonical {\sl niveau filtration}
\begin{equation}\label{eqn:niveau1}
0 = F_{-1}\Omega_*(X) \subseteq F_0\Omega_*(X) \subseteq \cdots \subseteq
F_{d-1}\Omega_*(X) \subseteq F_d\Omega_*(X) = \Omega_*(X).
\end{equation}

\begin{lem}\label{lem:Niv*}
If $f : X' \to X$ is a projective morphism, then 
$f_*\left(F_j\Omega_*(X')\right) \subseteq F_j\Omega_*(X)$. If $g: X' \to X$
is a smooth morphism of relative dimension $r$, then 
$g^*\left(F_j\Omega_*(X)\right) \subseteq F_{j+r}\Omega_*(X')$.
\end{lem}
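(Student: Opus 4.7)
The plan is to unwind the definition of $F_j\Omega_*(X)$ as the image of $\Omega_*(Z_j) \to \Omega_*(X)$ and reduce both statements to the basic functoriality axioms of oriented Borel-Moore homology (in particular, the base-change compatibility between projective push-forward and smooth pull-back along Cartesian squares). The projective case is essentially bookkeeping on dimensions of images, while the smooth case is where the actual content lies.

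For the first statement, I would start with an element $\alpha \in F_j\Omega_*(X')$. By definition, $\alpha = i'_*(\alpha_0)$ for some closed subscheme $i' : Z' \hookrightarrow X'$ with $\dim(Z') \leq j$ and some $\alpha_0 \in \Omega_*(Z')$. Since $f$ is projective, $f \circ i' : Z' \to X$ is projective; its scheme-theoretic image $W := \overline{f(Z')}$ is a closed subscheme of $X$ with $\dim(W) \leq \dim(Z') \leq j$. Factoring $f \circ i'$ as $Z' \xrightarrow{h} W \xrightarrow{\iota} X$ and applying functoriality of projective push-forward gives $f_*(\alpha) = \iota_*\bigl(h_*(\alpha_0)\bigr)$, which lies in the image of $\Omega_*(W) \to \Omega_*(X)$ and therefore in $F_j\Omega_*(X)$.

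For the smooth case, take $\beta \in F_j\Omega_*(X)$, written as $\beta = i_*(\gamma)$ with $i : W \hookrightarrow X$ a closed subscheme of dimension at most $j$ and $\gamma \in \Omega_*(W)$. Form the Cartesian square
\[
\xymatrix{
W' \ar[r]^{i'} \ar[d]_{g'} & X' \ar[d]^{g} \\
W \ar[r]^{i} & X,
}
\]
where $W' = g^{-1}(W)$ and $g'$ is again smooth of relative dimension $r$; hence every fiber of $g'$ has dimension $r$, giving $\dim(W') \leq j + r$. The key input is the base-change compatibility $g^* \circ i_* = i'_* \circ (g')^*$, which is part of the axioms of an oriented Borel-Moore homology theory (this is exactly the compatibility of smooth pull-back and projective push-forward along a transverse/Cartesian square with one leg smooth, established in Levine-Morel). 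Applying this yields $g^*(\beta) = i'_*\bigl((g')^*(\gamma)\bigr)$, which is manifestly in the image of $\Omega_*(W') \to \Omega_*(X')$, hence in $F_{j+r}\Omega_*(X')$.

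The only non-cosmetic step is the base-change identity $g^* i_* = i'_* (g')^*$; this is where one must actually invoke structure rather than just definitions. Aside from that, everything reduces to the observation that projective morphisms do not increase image dimension and smooth morphisms of relative dimension $r$ increase fiber/preimage dimension by exactly $r$.
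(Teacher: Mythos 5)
Your proof is correct and follows essentially the same route as the paper: the projective case is dimension bookkeeping through the scheme-theoretic image, and the smooth case reduces to the base-change compatibility $g^* \circ i_* = i'_* \circ (g')^*$ for a Cartesian square with $i$ projective (here a closed immersion) and $g$ smooth, which is one of the axioms of an oriented Borel-Moore homology theory. The paper phrases this slightly more concretely at the level of cobordism cycles, observing that $g^*[Y\to X]=[Y\times_X X'\to X']$ and that the image of $Y\times_X X'$ in $X'$ is the preimage of the image of $Y$ and so gains at most $r$ in dimension, but this is the same argument you gave expressed on generators rather than on arbitrary elements $\beta = i_*(\gamma)$.
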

\begin{proof}
The first assertion is obvious from the definition.
In fact, the push-forward map preserves the niveau filtration at the level
of the free abelian groups of cobordism cycles. The second assertion
also follows immediately using the fact that for a cobordism cycle 
$[Y \to X]$, one has $g^*\left([Y \to X]\right) = [Y \times_X X' \to X']$.
This in turn implies that $g^* \circ f_* = f'_* \circ g'^*$ for a Cartesian
square
\[
\xymatrix@C.7pc{
 W' \ar[r]^{f'} \ar[d]_{g'} & X' \ar[d]^{g} \\
W \ar[r]_{f} & X}
\]
such that $f$ is projective and $g$ is smooth.
\end{proof}
\begin{thm}{(Refined localization sequence)}\label{thm:RLS}
Let $X$ be a $k$-scheme and let $Z$ be a closed subscheme of $X$ with the
complement $U$. Then for every $j \in \Z$, there is an exact sequence
\[
F_j\Omega_*(Z) \to F_j\Omega_*(X) \to F_j\Omega_*(U) \to 0.
\]
\end{thm}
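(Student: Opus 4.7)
The plan is to establish in order: (a) well-definedness of both arrows on the filtered pieces, (b) vanishing of the composition, (c) surjectivity on the right, and (d) exactness at the middle term, which is the nontrivial assertion.

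For (a), both arrows preserve the niveau filtration by \lemref{lem:Niv*}: the pushforward $i_*$ along the closed immersion $i: Z \inj X$ sends $F_j\Omega_*(Z)$ into $F_j\Omega_*(X)$, and the restriction $j^*$ along the open immersion $j: U \inj X$ (smooth of relative dimension zero) sends $F_j\Omega_*(X)$ into $F_j\Omega_*(U)$. For (b), the composition $j^* \circ i_*$ vanishes by base change along the empty fiber $Z \times_X U = \emptyset$. For (c), given $\beta \in F_j\Omega_*(U)$, write $\beta = (V' \inj U)_*(\alpha)$ with $V'$ a closed subscheme of $U$ of dimension at most $j$ and $\alpha \in \Omega_*(V')$, and let $V := \overline{V'}$ denote the scheme-theoretic closure in $X$. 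Then $V$ is a closed subscheme of $X$ of dimension at most $j$, with $V \cap U = V'$, and the Levine--Morel localization (\thmref{thm:Levine-M}) applied to the pair $V \cap Z \subseteq V$ lifts $\alpha$ to some $\tilde\alpha \in \Omega_*(V)$. Pushing forward to $X$ produces $\gamma := (V \inj X)_*(\tilde\alpha) \in F_j\Omega_*(X)$ with $j^*\gamma = \beta$ by base change.

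For (d), take $\gamma \in F_j\Omega_*(X)$ with $j^*\gamma = 0$ and write $\gamma = (V \inj X)_*(\eta)$ for some $V$ closed of dimension at most $j$ in $X$ and $\eta \in \Omega_*(V)$. Base change yields $(V \cap U \inj U)_*(\bar\eta) = 0$, where $\bar\eta$ is the image of $\eta$ in $\Omega_*(V \cap U)$. The key reduction is to find an enlargement $V \subseteq V^*$, still of dimension at most $j$ in $X$, together with a representative $\eta^* \in \Omega_*(V^*)$ of $\gamma$ whose image in $\Omega_*(V^* \cap U)$ vanishes. Once this is achieved, the localization sequence for $V^*$ expresses $\eta^*$ as the image of some $\tilde\delta \in \Omega_*(V^* \cap Z)$, and $\delta := (V^* \cap Z \inj Z)_*(\tilde\delta) \in F_j\Omega_*(Z)$ satisfies $i_*\delta = \gamma$.

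The main obstacle is producing this refined representative. I would attempt it by induction on $j$, with the trivial base case $j = -1$. For the inductive step, the sequence \eqref{eqn:Niv*3} combined with the natural splitting of points of dimension exactly $j$ into those in $Z$ and those in $U$ reduces the exactness at $F_j$ to exactness at $F_{j-1}$ (the inductive hypothesis) and exactness at the graded piece $F_j/F_{j-1}$. The latter is controlled point-by-point through the $\widetilde{\Omega_*(k(x))}$-summands, which split compatibly with the $Z/U$ decomposition. The delicate step — showing that any cobordism relation trivializing an element on $U$ can be propagated to a compatible relation in $X$ modulo $Z$-contributions, without exceeding dimension $j$ — is the technical heart of the refined localization; it relies on the Levine--Pandharipande description of $\Omega_*$ by double point degeneration relations and on careful dimension tracking through these witnesses.
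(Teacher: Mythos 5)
Your parts (a), (b), and (c) are sound: well-definedness follows from \lemref{lem:Niv*}, vanishing of the composite is clear, and for surjectivity your trick of writing $\beta = (V' \inj U)_*(\alpha)$ for a single closed $V' \subset U$ of dimension $\le j$, taking the closure $V \subset X$, and lifting $\alpha$ through the Levine--Morel localization sequence for the pair $(V \cap Z, V)$ works just as well as the paper's route via the generalized degree formula.

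The genuine gap is in (d), and you have in fact flagged it yourself. Given $\gamma = (V \inj X)_*(\eta)$ with $j^*\gamma = 0$, the hypothesis only tells you that the \emph{pushforward} of $\bar\eta$ to $\Omega_*(U)$ dies; it does \emph{not} tell you that $\bar\eta = 0$ in $\Omega_*(V \cap U)$. The map $\Omega_*(V \cap U) \to \Omega_*(U)$ is far from injective, and its kernel is a priori witnessed by relations living on subschemes of $U$ of dimension much larger than $j$. Your proposed fix --- enlarging $V$ to a $V^*$ of dimension $\le j$ so that a new representative dies already in $\Omega_*(V^* \cap U)$ --- is exactly what is needed, but it is the entire content of the theorem and you do not prove it. The inductive sketch is not carried far enough to be checkable: \eqref{eqn:Niv*3} concerns the limit groups $\Omega_*(Z_j)$ rather than their images $F_j\Omega_*(X)$; the passage from ``exactness in degree $j{-}1$ and on the graded piece'' to ``exactness in degree $j$'' would require a five-lemma style argument whose middle input is precisely the unproven refinement; and the claim that the double-point-degeneration witnesses for a relation over $U$ can be made to avoid high-dimensional supports is the very thing one must verify.

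The paper takes a different and more tractable route that you should compare with. It works at the level of the free abelian groups $F_j\sZ_*(X)$ of cobordism cycles with image of dimension $\le j$, and isolates the key claim that the restriction map on kernels
\[
{\rm Ker}\bigl(F_j\sZ_*(X) \to F_j\Omega_*(X)\bigr) \to {\rm Ker}\bigl(F_j\sZ_*(U) \to F_j\Omega_*(U)\bigr)
\]
is surjective; this is deduced from an unfiltered lifting statement established in the course of the Levine--Morel proof of localization, together with a simple dimension check on the lifted generators. A diagram chase then shows that every element of ${\rm Ker}(F_j\Omega_*(X) \to F_j\Omega_*(U))$ is represented by a concrete difference $[Y \to X] - [Y' \to X]$ with $Y_U \cong Y'_U$ and $\dim f(Y), \dim f'(Y') \le j$. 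Such a difference is visibly supported on the closed subscheme $W = f(Y) \cup f'(Y')$ of dimension $\le j$, and one can then invoke ordinary Levine--Morel localization inside $W$. This neatly sidesteps the ``where do the relations live?'' problem, because the relations are never allowed to escape the free groups $\sZ_*$, and on $\sZ_*$ the support of a difference of cycles that agree over $U$ is automatically bounded. If you want to complete your argument rather than follow the paper, this is the mechanism you should be trying to reproduce: you need a lifting lemma at the $\sZ_*$-level, not a dimension bound on double-point-degeneration witnesses.
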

\begin{proof}
All these groups are zero if $j < 0$, so we assume $j \ge 0$.
Let $F_j\sZ_*(X)$ be the free abelian group on cobordism cycles 
$[Y \xrightarrow{f} X]$ such that $Y$ is irreducible and $\dim (f(Y)) \le j$. 
Note that $f(Y)$
is a closed and irreducible subscheme of $X$ since $Y$ is irreducible and
$f$ is projective. It is then clear that $F_j\sZ_*(X) \subset
\sZ_*(X)$ and $F_j\sZ_*(X) \surj F_j\Omega_*(X)$. We first show the 
following.
\begin{claim}\label{claim:RLS1}
\[
{\rm Ker}\left(F_j\sZ_*(X) \to F_j\Omega_*(X)\right) \to
{\rm Ker}\left(F_j\sZ_*(U) \to F_j\Omega_*(U)\right) 
\]
is surjective.
\end{claim}
{\sl Proof of the claim :} In one of the steps in the proof of the localization
sequence in \cite{LM}, it is shown that the map
\begin{equation}\label{eqn:RLS2}
{\rm Ker}\left(\sZ_*(X) \to \Omega_*(X)\right) \to
{\rm Ker}\left(\sZ_*(U) \to \Omega_*(U)\right) 
\end{equation}
is surjective. 
Let $\alpha = \stackrel{n}{\underset{l=1}\sum}a_l[W_l \xrightarrow{f_l} U]$ be 
an element in the kernel of the map $F_j\sZ_*(U) \to F_j\Omega_*(U)$.
By ~\eqref{eqn:RLS2}, there is an element ${\beta}
= \stackrel{m}{\underset{s=1}\sum}b_{s}[Y_s \xrightarrow{g_s} X] \in
\sZ_*(X)$ which restricts to $\alpha$. We can assume that any two
summands of this sum are not isomorphic. For a map $Y \to X$, let $Y_U \to U$
be its restriction to $U$. Then we get 
\[
\stackrel{n}{\underset{l=1}\sum}a_l[W_l \xrightarrow{f_l} U]
= \stackrel{m}{\underset{s=1}\sum}b_{s}[{(Y_s)}_U \xrightarrow{g_s} U].
\]
Since $\sZ_*(U)$ is a free abelian group, we see that $m = n$ and for
each $1 \le s \le n$, $[{(Y_s)}_U \xrightarrow{g_s} U] \cong
[W_l \xrightarrow{f_l} U]$ for some $l$. In particular, 
$\dim\left(g_s|_U({(Y_s)}_U)\right) \le j$. Since $Y_s$ is irreducible and $g_s$
is projective, one easily checks that $\dim(g_s(Y_s)) \le j$. 
That is $\beta \in F_j\sZ_*(X)$. This proves the claim. 

Since $F_j\sZ_*(X)$ is a free abelian group,
we see using Theorem~\ref{thm:Levine-P}, Claim~\ref{claim:RLS1} and the 
surjection $F_j\sZ_*(X) \surj F_j\Omega_*(X)$ that 
${\rm Ker}\left(F_j\Omega_*(X) \to F_j\Omega_*(U)\right)$ is generated by the 
cobordism cycles $\alpha = 
[Y \xrightarrow{f} X] - [Y' \xrightarrow{f'} X]$, where
$Y$ and $Y'$ are smooth and irreducible and $Y_U \cong Y'_U$ such that 
the dimension of their images are at most $j$.

Put $S = f(Y), \ S' = f'(Y'), \ T = S \cap U$ and $T' = S' \cap U$.
Then the irreducibility of the sources and the projectivity of the maps
imply that all these are irreducible closed subschemes of dimension 
at most $j$. Put $W = S \cup S'$ and $V = T \cup T'$. Then it is
easy to see that $[Y \to X]$ and $[Y' \to X]$ are the images of the
cycles $[Y \to W]$ and $[Y' \to W]$ in $\sZ_*(W)$ under the push-forward via the
closed immersion $W \inj X$. Moreover, $Y_V \cong Y'_V$. 
Hence we see that we have
a cycle $\beta = [Y \to W] - [Y' \to W] \in \sZ_*(W)$ such that
$[Y_V \to V] \cong [Y'_V \to V]$. Now, it follows from the proof of the
localization sequence in {\sl loc. cit.} that $\beta \in
{\rm Image}\left(\sZ_*(W \cap Z) \to \sZ_*(W)\right)$.
In particular, $\alpha = {\rm Image}(\beta) \in {\rm Image}
\left(F_j\sZ_*(Z) \to F_j\sZ_*(X)\right)$. Hence we have shown that
${\rm Ker}\left(F_j\Omega_*(X) \to F_j\Omega_*(U)\right)$ comes from
$F_j\Omega_*(Z)$. We now show that the map $F_j\Omega_*(X) \to
F_j\Omega_*(U)$ is surjective to complete the proof of the theorem.

We can assume that $X$ is irreducible and $j < \dim(X)$. By the generalized
degree formula ({\sl cf.} \cite[Theorem~4.4.7]{LM}), any 
$\alpha \in F_j\Omega_*(U)$ can be written as
\[
\alpha = \stackrel{s}{\underset{l = 1} \sum} u_l [\wt{U}_l \to U],
\]
where $\wt{U}_l \to U_l$ is a resolution of singularities of an irreducible
closed subscheme $U_l \subset U$ and $u_l \in \bL_*$ 
Then we must have $[\wt{U}_l \to U] \in
F_j\Omega_*(U)$. Letting $X_l = \ov{U_l} \subset X$, we see that
$\ov {\alpha} = \stackrel{s}{\underset{l = 1} \sum} u_l [\wt{X}_l \to X]
 \in F_j\Omega_*(X)$ and its image in $\Omega_*(U)$ is $\alpha$.
This finishes the proof of the theorem.
\end{proof}

The following is an immediate consequence of Theorem~\ref{thm:RLS}.
\begin{cor}\label{cor:Niv-open}
Let $X$ be a $k$-scheme. Then for any $j \ge 0$ and any closed subscheme
$Z \subset X$ of dimension at most $j$, the natural map
$\Omega_*(X) \to \Omega_*(X - Z)$ induces an isomorphism
\[
\frac{\Omega_*(X)}{F_j\Omega_*(X)} \xrightarrow{\cong}
\frac{\Omega_*(X-Z)}{F_j\Omega_*(X-Z)}.
\]
\end{cor}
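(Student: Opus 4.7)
The plan is to read off the corollary by comparing the refined localization sequence of \thmref{thm:RLS} with the ordinary Levine-Morel localization sequence of \thmref{thm:Levine-M}, packaged in the commutative diagram
\[
\xymatrix{
F_j\Omega_*(Z) \ar[r] \ar[d] & F_j\Omega_*(X) \ar[r] \ar[d] & F_j\Omega_*(U) \ar[r] \ar[d] & 0 \\
\Omega_*(Z) \ar[r] & \Omega_*(X) \ar[r] & \Omega_*(U) \ar[r] & 0
}
\]
with exact rows (the vertical maps are the inclusions from \defref{defn:niveau}).

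The first key observation is that the hypothesis $\dim(Z) \le j$ forces the left vertical map to be the identity. Indeed, for any projective $W \to Z$, the image lies in $Z$ and hence has dimension at most $j$; thus the natural $\bL_*$-linear map $\Omega_*(Z) \to \Omega_*(Z)$ already has all of $\Omega_*(Z)$ as its image in the sense of \defref{defn:niveau}, giving $F_j\Omega_*(Z) = \Omega_*(Z)$.

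Given this, I would then run a short diagram chase. For surjectivity of $\Omega_*(X)/F_j\Omega_*(X) \to \Omega_*(U)/F_j\Omega_*(U)$, any class in $\Omega_*(U)$ lifts to $\Omega_*(X)$ by the bottom row. For injectivity, take $\beta \in \Omega_*(X)$ whose image in $\Omega_*(U)$ lies in $F_j\Omega_*(U)$; by the surjection $F_j\Omega_*(X) \surj F_j\Omega_*(U)$ from \thmref{thm:RLS}, subtract an element $\beta' \in F_j\Omega_*(X)$ to arrange that $\beta - \beta' \mapsto 0$ in $\Omega_*(U)$, so by the Levine-Morel localization sequence $\beta - \beta'$ comes from $\Omega_*(Z) = F_j\Omega_*(Z)$, and \lemref{lem:Niv*} guarantees that its image in $\Omega_*(X)$ lies in $F_j\Omega_*(X)$. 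Hence $\beta \in F_j\Omega_*(X)$, as required.

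There is no real obstacle here: the nontrivial input is the refined localization sequence itself (\thmref{thm:RLS}), and the only subtlety is noticing that the dimension hypothesis collapses $F_j\Omega_*(Z)$ to $\Omega_*(Z)$, which is what converts the refined sequence into the statement about the quotients.
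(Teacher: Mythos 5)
Your proof is correct and is essentially what the paper means when it calls the corollary ``an immediate consequence'' of Theorem~\ref{thm:RLS} (the paper offers no further proof). The two observations you isolate — that $\dim(Z)\le j$ forces $F_j\Omega_*(Z)=\Omega_*(Z)$ via the filtration $F_{\dim Z}\Omega_*(Z)=\Omega_*(Z)$ of~\eqref{eqn:niveau1}, and the ensuing diagram chase against the Levine--Morel localization sequence using the surjection $F_j\Omega_*(X)\surj F_j\Omega_*(U)$ from Theorem~\ref{thm:RLS} together with Lemma~\ref{lem:Niv*} — are precisely the intended content.
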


\begin{lem}\label{lem:Niv-Chow}
For a $k$-scheme $X$ and $i \ge 0$, the natural map $\Omega_i(X) \to CH_i(X)$ 
has the factorization
\[
\Omega_i(X) \to \frac{\Omega_i(X)}{F_{i-1}\Omega_i(X)} \to CH_i(X).
\]
\end{lem}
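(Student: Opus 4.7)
The plan is to show that $F_{i-1}\Omega_i(X)$ is contained in the kernel of the natural comparison map $\Omega_i(X) \to CH_i(X)$; the desired factorization is then automatic. The comparison map is the natural transformation of oriented Borel-Moore homology theories induced by sending a cobordism cycle $[Y \to X, L_1, \dots, L_r]$ to the corresponding Chow cycle, and this transformation commutes with projective push-forward by construction (it is part of the universality statement of Theorem~\ref{thm:Levine-M}).

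By Definition~\ref{defn:niveau}, $F_{i-1}\Omega_i(X)$ is the union of the images of the push-forward maps $\Omega_i(Z) \to \Omega_i(X)$ as $Z$ ranges over closed subschemes of $X$ with $\dim_k(Z) \le i-1$. Fix such a $Z$ and an element $\alpha \in \Omega_i(Z)$ with image $\bar\alpha \in \Omega_i(X)$. Naturality of the comparison map with respect to the closed immersion $\iota\colon Z \hookrightarrow X$ gives a commutative square
\[
\xymatrix@C.7pc{
\Omega_i(Z) \ar[r] \ar[d]_{\iota_*} & CH_i(Z) \ar[d]^{\iota_*} \\
\Omega_i(X) \ar[r] & CH_i(X).
}
\]
Since $\dim(Z) \le i - 1 < i$, there are no $i$-dimensional cycles on $Z$, so $CH_i(Z) = 0$. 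Chasing $\alpha$ around the square shows that the image of $\bar\alpha$ in $CH_i(X)$ is zero.

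Since this holds for every generator of $F_{i-1}\Omega_i(X)$, the whole subgroup lies in the kernel of $\Omega_i(X) \to CH_i(X)$, yielding the claimed factorization through the quotient $\Omega_i(X)/F_{i-1}\Omega_i(X)$. There is essentially no obstacle here: the only inputs required are the functoriality of $\Omega_* \to CH_*$ under projective push-forward and the elementary dimension vanishing $CH_i(Z) = 0$ for $\dim Z < i$.
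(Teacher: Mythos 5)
Your proof is correct and follows essentially the same route as the paper: both arguments use the naturality of the comparison map $\Omega_* \to CH_*$ under projective push-forward, together with the vanishing $CH_i(Z)=0$ for $\dim Z \le i-1$, to show $F_{i-1}\Omega_i(X)$ lies in the kernel.
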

\begin{proof}
By Theorem~\ref{thm:Levine-P}, $\Omega_*(X)$ is generated by the cobordism
cycles $[Y \to X]$, where $Y$ is smooth and $f$ is projective.
It follows from the definition of the niveau filtration that
$F_{j}\Omega_*(X)$ is generated by the cobordism cycles of the form
$i_*\left([Y \to Z]\right)$, where $Z \overset{\phi}{\inj} X$ is a closed 
subscheme of $X$ of dimension at most $j$. Since $\Omega_* \to CH_*$
is a natural transformation of oriented Borel-Moore homology theories,
we get a commutative diagram
\[
\xymatrix@C.8pc{
\Omega_i(Z) \ar[r] \ar[d]_{\phi_*} & CH_i(Z) \ar[d]^{\phi '_*} \\
\Omega_i(X) \ar[r] & CH_i(X).}
\]
The lemma now follows from the fact that $CH_i(Z) = 0$ if $j \le i-1$.
\end{proof}

\begin{lem}\label{lem:Niv-Hi}
Let $E \xrightarrow{f} X$ be a vector bundle of rank $r$. Then the 
pull-back map $f^*: \Omega_*(X) \to \Omega_*(E)$ induces an isomorphism
\[
F_j\Omega_*(X) \xrightarrow{\cong} F_{j+r}\Omega_*(E)
\]
for all $j \in \Z$. In particular, $F_{<r}\Omega_*(E) = 0$.
\end{lem}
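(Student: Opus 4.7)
My plan has three pieces: well-definedness of the map, its injectivity, and its surjectivity. The first two are short. Well-definedness follows from Lemma~\ref{lem:Niv*} applied to the smooth morphism $f$ of relative dimension $r$, while injectivity is a consequence of homotopy invariance (Theorem~\ref{thm:Levine-M}), which makes $f^*: \Omega_*(X) \to \Omega_*(E)$ an isomorphism on the full cobordism and so injective on any subgroup.

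The substance is surjectivity, which I establish by Noetherian induction on $\dim X$, reducing to the case of a trivial bundle. For the inductive step, choose a dense open $U \subseteq X$ over which $E$ trivializes (possible because $X$ is quasi-projective and vector bundles are Zariski-locally trivial), and set $X_0 := X \setminus U$ so that $\dim X_0 < \dim X$. The refined localization sequence of Theorem~\ref{thm:RLS} applied to both $X_0 \subset X$ and $E|_{X_0} \subset E$ yields a commutative diagram with exact rows
\[
\xymatrix@C=0.6pc{
F_j\Omega_*(X_0) \ar[r] \ar[d]_{f^*} & F_j\Omega_*(X) \ar[r] \ar[d]_{f^*} & F_j\Omega_*(U) \ar[r] \ar[d]_{f^*} & 0 \\
F_{j+r}\Omega_*(E|_{X_0}) \ar[r] & F_{j+r}\Omega_*(E) \ar[r] & F_{j+r}\Omega_*(E|_U) \ar[r] & 0
}
\]
in which the left vertical arrow is surjective by the inductive hypothesis. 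A standard four-term diagram chase then reduces surjectivity of the middle arrow to surjectivity of the right arrow, i.e., to the lemma for the trivial bundle $p: U \times \A^r \to U$.

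To handle the trivial bundle, note that each section $s_a: U \to U \times \A^r$, $u \mapsto (u,a)$, is a regular closed immersion of codimension $r$ and so supplies a Gysin pull-back $s_a^*: \Omega_*(U \times \A^r) \to \Omega_{*-r}(U)$ in the Levine-Morel framework. Since $p \circ s_a = \id_U$ and $p^*$ is an isomorphism, $s_a^* = (p^*)^{-1}$. Given $\alpha \in F_{j+r}\Omega_*(U \times \A^r)$ written as a finite sum $\sum_l c_l (i_{W_l})_*(\alpha_l')$ with each $W_l \subseteq U \times \A^r$ closed of dimension $\le j+r$, the second projection $W_l \to \A^r$ has every fiber of dimension $\le j$ except over a proper closed subset of $\A^r$: if $W_l \to \A^r$ is dominant, the generic fiber has dimension $\le \dim W_l - r \le j$; otherwise the image is contained in a proper closed, so generic fibers are empty. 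Since $k$ is infinite (characteristic zero), I may choose $a \in k^r$ avoiding the finite union of these proper closed subsets; for such $a$ every $s_a^{-1}(W_l) = W_l \cap (U \times \{a\})$ has dimension $\le j$, and the base-change identity $s_a^* \circ (i_{W_l})_* = (i_{s_a^{-1}(W_l)})_* \circ s_a^!$ shows that $s_a^*(\alpha)$ is supported on a closed subscheme of $U$ of dimension at most $j$. Hence $s_a^*(\alpha) \in F_j\Omega_*(U)$, and $\alpha = p^*(s_a^*(\alpha)) \in p^*(F_j\Omega_*(U))$, as required.

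The base case $\dim X = 0$ is itself subsumed by the trivial-bundle argument, and the ``In particular'' clause is the specialization $j = -1$, for which $F_{-1}\Omega_*(X) = 0$ forces $F_{r-1}\Omega_*(E) = 0$. The main technical point I anticipate as the hardest is the generic-position / support-tracking step---carefully verifying that the refined Gysin pullback places $s_a^*(\alpha)$ in the expected filtration level---though this is a standard compatibility in the l.c.i.-pullback formalism of Levine-Morel.
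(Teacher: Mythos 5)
Your proof is correct, but it takes a genuinely different route from the paper's. The paper establishes surjectivity directly and globally by two applications of the Levine--Morel generalized degree formula (\cite[Theorem~4.4.7]{LM}): first on $E$, to write a class in $F_j\Omega_*(E)$ as an $\bL_*$-combination of $[\wt{E_l}\to E]$ with $\dim E_l\le j$; then, after using the isomorphism $f^*$ to write each such term as $f^*(x_l)$, again on $X$ to express $x_l$ as an $\bL_*$-combination of $[\wt{X}_{l,n}\to X]$, with the grading forcing $\dim X_{l,n}\le p_l - r \le j-r$, so $x_l\in F_{j-r}\Omega_*(X)$. Your argument replaces this with Noetherian induction on $\dim X$ plus the refined localization sequence (Theorem~\ref{thm:RLS}) and a four-term diagram chase, reducing to the trivial bundle, which you then dispose of by a moving-to-general-position argument: a generic section $s_a$ kills the filtration jump, and the refined Gysin base change shows $s_a^*(\alpha)$ lands in $F_j\Omega_*(U)$.

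Both routes are valid, but they trade different technical inputs. Yours requires the full refined l.c.i.\ Gysin formalism of \cite{LM} (Chapter~6), in particular the compatibility $s_a^*\circ (i_{W_l})_* = (i_{s_a^{-1}(W_l)})_*\circ s_a^!$ for a non-transverse Cartesian square with $s_a$ a regular embedding into a possibly singular target $U\times\A^r$; you should cite this precisely and confirm it holds without smoothness or Tor-independence hypotheses, which is not entirely routine. It also requires the functorial identity $s_a^*\circ p^* = \id$ for composable l.c.i.\ pullbacks, again from Chapter~6 of \cite{LM}. The paper's route, by contrast, leans only on the generalized degree formula, which is a consequence of resolution of singularities and needs no Gysin machinery. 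What your approach buys is a more geometric picture (moving cycles to general position via a generic $k$-rational section, legitimate since $k$ is infinite) and a structural decomposition (Noetherian induction plus localization) that can transfer to other settings. One small point to make explicit: the commutativity of the left square in your ladder is exactly the smooth base change $g^*\circ f_* = f'_*\circ g'^*$ already recorded in Lemma~\ref{lem:Niv*}, applied to the Cartesian square $E|_{X_0}\to E$ over $X_0\to X$; this is worth saying since it is where the diagram chase actually needs commutativity.
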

\begin{remk} The reader should be warned that the map $f^*$ shifts the degree 
of the grading by $r$.
\end{remk}
\begin{proof}
Using the generalized degree formula, this can be proved in the same way 
as \cite[Lemma~3.3]{DD}, where a similar result is proven for smooth
varieties and coniveau filtration. We sketch the proof in the singular case.
 
By the homotopy invariance of the algebraic cobordism, the natural map
$\Omega_*(X) \xrightarrow{f^*} \Omega_*(E)$ is an isomorphism.
So we only need to show that this map is surjective at each level of the
niveau filtration. So let $e \in F_j\Omega_*(E)$. Assume that the dimension of
$X$ is $d$. Since $F_{d+r}\Omega_*(E) = \Omega_*(E)$, the homotopy invariance
implies that we can assume that $j < d+r$. The generalized degree formula
now implies that there are irreducible closed subschemes 
$\{E_1, \cdots , E_s\}$ of
$E$ such that in the $\bL_*$-module $\Omega_*(E)$, one has
\[
e = \stackrel{s}{\underset{l = 1} \sum} w_l [\widetilde{E_l} \to E],
\]
where $\wt{E_l} \to E_l$ is a resolution of singularities of $E_l$ and
$w_l \in \bL_*$. In particular, we see that for each $l$,
$[\widetilde{E_l} \to E] \in \Omega_{p_l}(E)$ for some  $p_l \le j$. 

On the other hand, the isomorphism of $f^*$ and the generalized degree formula
for $\Omega_*(X)$  imply that for each $l$, 
\[
[\wt{E_l} \to E] = 
\stackrel{n_l}{\underset{n = 1} \sum} 
w_{l,n} \left[\left(\wt{X}_{l,n} \times_{X_{l,n}} E \right)\to E \right],
\]
where $\wt{X}_{l,n} \to X_{l,n}$ is a resolution of singularities of a closed
subscheme $X_{l,n}$ of $X$. Since each component of this sum is a
homogeneous element, we must have that 
$\left[ \left(\wt{X}_{l,n} \times_{X_{l,n}} E\right) \to E \right]
\in \Omega_{p_l}(E)$, that is, $[\wt{X}_{l,n} \to X] \in \Omega_{p_l-r}(X)$.
This in turn implies that $x_l =
\stackrel{n_l}{\underset{n = 1} \sum} w_{l,n} [\wt{X}_{l,n} \to X] \in
\Omega_{p_l-r}(X)$. Hence we get 
\[
x = 
\stackrel{s}{\underset{l = 1} \sum} w_l x_l \in F_{j-r}\Omega_*(X) \ \ 
{\rm and} \ \ e = f^*(x).
\]
\end{proof}

\section{Equivariant algebraic cobordism}\label{section:EC}
In this text, $G$ will denote a linear algebraic group of dimension $g$ 
over $k$. All representations of $G$ will be finite dimensional. 
The definition of equivariant cobordism needs one to consider certain kind of 
mixed spaces which in general may not be a scheme even if the original space 
is a scheme. The following well known ({\sl cf.} \cite[Proposition~23]{EG}) 
lemma shows that this
problem does not occur in our context and all the mixed spaces in this
paper are schemes with ample line bundles.
\begin{lem}\label{lem:sch}
Let $H$ be a linear algebraic group acting freely and linearly on a 
$k$-scheme $U$ such that the quotient $U/H$ exists as a quasi-projective
variety. Let $X$ be a $k$-scheme with a linear action of $H$.
Then the mixed quotient $X \stackrel{H} {\times} U$ exists for the 
diagonal action of $H$ on $X \times U$ and is quasi-projective.
Moreover, this quotient is smooth if both $U$ and $X$ are so.
In particular, if $H$ is a closed subgroup of a linear algebraic group $G$ 
and $X$ is a $k$-scheme with a linear action of $H$, then the quotient 
$G \stackrel{H} {\times} X$ is a quasi-projective scheme.
\end{lem}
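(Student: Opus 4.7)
The plan is to construct $X \stackrel{H}{\times} U$ as a locally closed subscheme of a projective-space bundle over $U/H$, using the linearity of the $H$-action on $X$ to obtain an equivariant projective embedding. First, since the $H$-action on $X$ is linear, $X$ carries an $H$-equivariant ample line bundle, which after passing to a sufficiently high power furnishes an $H$-equivariant locally closed immersion $\iota : X \inj \P(V)$ for some finite-dimensional $H$-representation $V$.

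Next, the diagonal $H$-action on $\P(V) \times U$ is free (since the action on the $U$-factor already is), and the second projection is $H$-equivariant. The associated bundle construction applied to the principal $H$-bundle $U \to U/H$ and the representation $V$ yields
\[
\P(V) \stackrel{H}{\times} U \;\cong\; \P\bigl((V \times U)/H\bigr),
\]
the projectivization of the rank-$\dim(V)$ vector bundle on $U/H$ descended from $V \times U$. This is a projective-space bundle over the quasi-projective scheme $U/H$, hence is itself quasi-projective. Using $\iota \times \id_U$, one then exhibits $X \stackrel{H}{\times} U$ as a locally closed subscheme of $\P(V) \stackrel{H}{\times} U$, and the categorical-quotient property is verified by checking it étale-locally on $U/H$, where the principal bundle trivializes and the quotient becomes an ordinary projection. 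Smoothness propagates from $X \times U$ to the quotient because the quotient map is a principal $H$-bundle with smooth structure group $H$, hence is itself smooth.

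For the final assertion, take $U = G$ with $H$ acting by right multiplication. This action is free and, by Chevalley's theorem, $G/H$ exists as a quasi-projective scheme, so the first part applies and produces $G \stackrel{H}{\times} X$ as claimed. The main obstacle is the very first step: securing the $H$-equivariant locally closed immersion $X \inj \P(V)$. This is precisely where the linearity hypothesis on the $H$-action enters, via Sumihiro's theorem as referenced in the Notations paragraph; once that input is in hand, the remainder is standard descent along a Zariski- (indeed étale-) locally trivial principal $H$-bundle combined with the associated bundle construction.
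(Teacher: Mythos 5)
Your proof is correct and essentially unpacks the argument behind the paper's citation: the paper simply invokes \cite[Proposition~23]{EG} and \cite[Proposition~7.1]{GIT}, which is exactly the construction you spell out (an $H$-equivariant locally closed immersion $X \inj \P(V)$, descent of that immersion along the projective bundle $\P(V) \stackrel{H}{\times} U \to U/H$, hence quasi-projectivity of the quotient). The smoothness claim and the final assertion via $U = G$ with $G/H$ quasi-projective are handled the same way in both.
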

\begin{proof} It is already shown in \cite[Proposition~23]{EG} using
\cite[Proposition~7.1]{GIT} that the quotient $X \stackrel{H} {\times} U$ 
is a scheme. Moreover, as $U/H$ is quasi-projective, 
\cite[Proposition~7.1]{GIT} in fact shows that $X \stackrel{H} {\times} U$ 
is also quasi-projective. The similar conclusion about 
$G \stackrel{H} {\times} X$ follows from the first case by taking $U = G$
and by observing that $G/H$ is a smooth quasi-projective scheme
({\sl cf.} \cite[Theorem~6.8]{Borel}). The assertion about the smoothness
is clear since $X \times U \to X \stackrel{H} {\times} U$ is a principal
$H$-bundle.
\end{proof}   
For any integer $j \ge 0$, let $V_j$ be an $l$-dimensional representation of 
$G$ and let $U_j$ 
be a $G$-invariant open subset of $V_j$ such that the codimension of the 
complement $(V_j-U_j)$ in $V_j$ is at least $j$ and $G$ acts freely on 
$U_j$ such that the quotient ${U_j}/G$ is a quasi-projective scheme. Such a 
pair $\left(V_j,U_j\right)$ will be called a {\sl good} pair for the $G$-action
corresponding to $j$ ({\sl cf.} \cite[Section~2]{Krishna}).
It is easy to see that a good pair always exists ({\sl cf.}
\cite[Lemma~9]{EG}). Let $X_G$ denote the mixed quotient 
$X \stackrel{G} {\times} U_j$ of the product $X \times U_j$ by the 
diagonal action of $G$, which is free.

Let $X$ be a $k$-scheme of dimension $d$ with a $G$-action.
Fix $j \ge 0$ and let $(V_j, U_j)$ be an $l$-dimensional good pair 
corresponding to $j$. For $i \in \Z$, set
\begin{equation}\label{eqn:E-cob*}
{\Omega^G_i(X)}_j =  \frac{\Omega_{i+l-g}\left({X\stackrel{G} {\times} U_j}\right)}
{F_{d+l-g-j}\Omega_{i+l-g}\left({X\stackrel{G} {\times} U_j}\right)}.
\end{equation}
\begin{lem}\label{lem:ECob1}
For a fixed $j \ge 0$, the group ${\Omega^G_i(X)}_j$ is independent of the 
choice of the good pair $(V_j, U_j)$.
\end{lem}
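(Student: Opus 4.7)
The plan is to adapt the standard ``double fibration'' argument of Edidin-Graham to the cobordism setting, using the two niveau-filtration tools we have just established. Given two good pairs $(V_1, U_1)$ and $(V_2, U_2)$ corresponding to the same $j$, with $l_s = \dim V_s$ for $s = 1,2$, I will compare each of the quotients in \eqref{eqn:E-cob*} to the analogous quotient attached to the good pair $(V_1 \oplus V_2, U_1 \times U_2)$, whose complement $(V_1-U_1)\times V_2 \cup V_1\times (V_2-U_2)$ still has codimension at least $j$ in $V_1 \oplus V_2$.

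First I would introduce the intermediate mixed space $X \stackrel{G}{\times} (U_1 \times V_2)$. The $G$-equivariant projection $U_1 \times V_2 \to U_1$ descends to a map $X \stackrel{G}{\times} (U_1 \times V_2) \to X \stackrel{G}{\times} U_1$ which, since $G$ acts freely on $U_1$ and $V_2$ is a linear representation, is a Zariski-locally trivial vector bundle of rank $l_2$. By \lemref{lem:Niv-Hi}, the pullback is an isomorphism
\[
\frac{\Omega_{i+l_1-g}(X \stackrel{G}{\times} U_1)}{F_{d+l_1-g-j}\Omega_{i+l_1-g}(X \stackrel{G}{\times} U_1)} \xrightarrow{\cong} \frac{\Omega_{i+l_1+l_2-g}(X \stackrel{G}{\times} (U_1 \times V_2))}{F_{d+l_1+l_2-g-j}\Omega_{i+l_1+l_2-g}(X \stackrel{G}{\times} (U_1 \times V_2))},
\]
because pullback along a rank-$l_2$ vector bundle shifts both the grading and the niveau level by $l_2$.

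Next, the open immersion $X \stackrel{G}{\times} (U_1 \times U_2) \inj X \stackrel{G}{\times} (U_1 \times V_2)$ has closed complement $X \stackrel{G}{\times} (U_1 \times (V_2 - U_2))$, whose dimension is at most $d + (l_1 - g) + (l_2 - j) = d + l_1 + l_2 - g - j$. \corref{cor:Niv-open} then produces an isomorphism of the quotients $\Omega_*/F_{d+l_1+l_2-g-j}$ of these two total spaces. Composing the two isomorphisms identifies ${\Omega^G_i(X)}_j$ computed from $(V_1, U_1)$ canonically with the group ${\Omega^G_i(X)}_j$ defined by the good pair $(V_1 \oplus V_2, U_1 \times U_2)$. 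The same argument with the roles of the indices reversed identifies ${\Omega^G_i(X)}_j$ computed from $(V_2, U_2)$ with the same common refinement, and therefore with the one from $(V_1, U_1)$.

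The main obstacle is simply index bookkeeping: verifying that the codimension estimate on $X \stackrel{G}{\times} (U_1 \times (V_2 - U_2))$ lands exactly at the level $d+l_1+l_2-g-j$ required by \corref{cor:Niv-open}, and that the rank and dimension shifts in \lemref{lem:Niv-Hi} align with the normalization built into \eqref{eqn:E-cob*}. One should also confirm that $(V_1 \oplus V_2, U_1 \times U_2)$ is itself a good pair in the sense required, in particular that the quotient $(U_1 \times U_2)/G$ is quasi-projective; this follows from \lemref{lem:sch} applied to the free $G$-action on $U_1$ with fiber-type $U_2$.
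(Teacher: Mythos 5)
Your proof is correct and takes essentially the same double-fibration approach as the paper, invoking the same two tools (\lemref{lem:Niv-Hi} for the vector bundle step, \corref{cor:Niv-open} for the open immersion step) with the same codimension bookkeeping. The only difference is cosmetic: the paper compares both good pairs to the union $U = (U_j\oplus V'_j)\cup(V_j\oplus U'_j)$ via the open immersion $U_j\oplus V'_j \hookrightarrow U$, whereas you land on the product $U_1\times U_2$ by restricting $U_1\times V_2$, but both hubs carry identical quotient groups and the chains of isomorphisms are interchangeable.
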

\begin{proof}
Let $(V'_j, U'_j)$ be another good pair of dimension $l'$ corresponding to 
$j$. Following the proof of a similar result for the equivariant
Chow groups in \cite[Proposition~1]{EG}, we let $V = V_j \oplus V'_j$ and
$U = (U_j \oplus V'_j) \cup (V_j \oplus U'_j)$. Let $G$ act diagonally on
$V$. Then it is easy to see that the complement of the open subset
$X \stackrel{G} {\times} (U_j \oplus V'_j)$ in $X \stackrel{G} {\times} U$ has 
dimension at most $d+l+l'-g-j$. Hence by Corollary~\ref{cor:Niv-open}, the map
\[
\frac{\Omega_{i+l+l'-g}\left(X \stackrel{G} {\times} U\right)}
{F_{d+l+l'-g-j}\Omega_{i+l+l'-g}\left(X \stackrel{G} {\times} U\right)}
\to
\frac{\Omega_{i+l+l'-g}\left(X \stackrel{G} {\times} (U_j \oplus V'_j)\right)}
{F_{d+l+l'-g-j}\Omega_{i+l+l'-g}\left(X \stackrel{G} 
{\times} (U_j \oplus V'_j)\right)}
\]
is an isomorphism.
On the other hand, the map $X \stackrel{G} {\times} (U_j \oplus V'_j)
\to X \stackrel{G} {\times} U_j$ is a vector bundle of rank $l'$ and hence
by Lemma~\ref{lem:Niv-Hi}, the map
\[
\frac{\Omega_{i+l-g}\left(X \stackrel{G} {\times} U_j \right)}
{F_{d+l-g-j}\Omega_{i+l-g}\left(X \stackrel{G} 
{\times} U_j \right)}  \to
\frac{\Omega_{i+l+l'-g}\left(X \stackrel{G} {\times} (U_j \oplus V'_j)\right)}
{F_{d+l+l'-g-j}\Omega_{i+l+l'-g}\left(X \stackrel{G} 
{\times} (U_j \oplus V'_j)\right)}
\]
is also an isomorphism. Combining the above two isomorphisms, we get the
isomorphism
\[
\frac{\Omega_{i+l+l'-g}\left(X \stackrel{G} {\times} U\right)}
{F_{d+l+l'-g-j}\Omega_{i+l+l'-g}\left(X \stackrel{G} {\times} U\right)}
\cong \frac{\Omega_{i+l-g}\left(X \stackrel{G} {\times} U_j \right)}
{F_{d+l-g-j}\Omega_{i+l-g}\left(X \stackrel{G} 
{\times} U_j \right)}.
\]
In the same way, we also get an isomorphism
\[
\frac{\Omega_{i+l+l'-g}\left(X \stackrel{G} {\times} U\right)}
{F_{d+l+l'-g-j}\Omega_{i+l+l'-g}\left(X \stackrel{G} {\times} U\right)}
\cong 
\frac{\Omega_{i+l'-g}\left(X \stackrel{G} {\times} U'_j \right)}
{F_{d+l'-g-j}\Omega_{i+l'-g}\left(X \stackrel{G} 
{\times} U'_j \right)},
\]
which proves the result.
\end{proof}
\begin{lem}\label{lem:ECob2}
For $j' \ge j \ge 0$, there is a natural surjective map
$\Omega^G_i(X)_{j'} \surj \Omega^G_i(X)_j$.
\end{lem}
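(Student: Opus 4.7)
The plan is to exploit the fact that a good pair for the larger index $j'$ is automatically a good pair for the smaller index $j$, so that both quotients in \eqref{eqn:E-cob*} can be computed using the \emph{same} mixed space, reducing the assertion to a simple comparison of two terms of the niveau filtration on the same cobordism group.

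First I would note that if $(V_{j'}, U_{j'})$ is an $l$-dimensional good pair corresponding to $j'$, then the codimension of $V_{j'} - U_{j'}$ in $V_{j'}$ is at least $j' \ge j$, and $G$ still acts freely on $U_{j'}$ with quasi-projective quotient. Hence $(V_{j'}, U_{j'})$ is also a good pair corresponding to $j$. By Lemma~\ref{lem:ECob1} we are free to use this single good pair to compute both $\Omega^G_i(X)_{j'}$ and $\Omega^G_i(X)_j$. With $X_G = X \stackrel{G}{\times} U_{j'}$, the two groups then have the \emph{same} numerator $\Omega_{i+l-g}(X_G)$, and are quotients by $F_{d+l-g-j'}\Omega_{i+l-g}(X_G)$ and $F_{d+l-g-j}\Omega_{i+l-g}(X_G)$ respectively.

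Since $j' \ge j$ gives $d+l-g-j' \le d+l-g-j$, the monotonicity of the niveau filtration \eqref{eqn:niveau1} yields
\[
F_{d+l-g-j'}\Omega_{i+l-g}(X_G) \;\subseteq\; F_{d+l-g-j}\Omega_{i+l-g}(X_G),
\]
so the identity on $\Omega_{i+l-g}(X_G)$ descends to a surjection $\Omega^G_i(X)_{j'} \surj \Omega^G_i(X)_j$, as required.

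The only thing to check is that this surjection does not depend on the common good pair chosen. But this is immediate from the proof of Lemma~\ref{lem:ECob1}: given any two good pairs for $j'$, the comparison there proceeds by replacing each by $X \stackrel{G}{\times} (U_{j'} \oplus V'_{j'})$ and $X \stackrel{G}{\times} U$, and the resulting isomorphisms are induced by the pull-back along a vector bundle and the restriction to an open subscheme of large codimension. Both maps preserve the niveau filtration (by Lemmas~\ref{lem:Niv-Hi} and \ref{lem:Niv*}) and in particular are compatible with the passage from the $j'$-level to the $j$-level. Hence the surjection constructed above is canonical, completing the proof. The step I expect to be essentially trivial here is the surjectivity itself; the only mildly subtle point is the independence from the good pair, which is handled by the same diagram chase that underlies Lemma~\ref{lem:ECob1}.
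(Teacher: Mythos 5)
Your proposal is correct and follows essentially the same approach as the paper: choose a single good pair for $j'$ (which is automatically a good pair for $j$), use Lemma~\ref{lem:ECob1} to identify both groups as quotients of the same cobordism group, and observe that the niveau filtration is nested so the identity map descends to a surjection. The independence check you add at the end is a reasonable extra precaution but is already implicit in the paper's invocation of Lemma~\ref{lem:ECob1}.
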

\begin{proof} 
Choose a good pair $(V_{j'}, U_{j'})$ for $j'$. Then it is clearly a good pair
for $j$ too. Moreover, there is a natural surjection
\[
 \frac{\Omega_{i+l-g}\left({X\stackrel{G} {\times} U_{j'}}\right)}
{F_{d+l-g-j'}\Omega_{i+l-g}\left({X\stackrel{G} {\times} U_{j'}}\right)}
\surj
 \frac{\Omega_{i+l-g}\left({X\stackrel{G} {\times} U_{j'}}\right)}
{F_{d+l-g-j}\Omega_{i+l-g}\left({X\stackrel{G} {\times} U_{j'}}\right)}.
\]
On the other hand, the left and the right terms are $\Omega^G_i(X)_{j'}$
and $\Omega^G_i(X)_{j}$ respectively by Lemma~\ref{lem:ECob1}.
\end{proof}

\begin{defn}\label{defn:ECob}
Let $X$ be a $k$-scheme of dimension $d$ with a $G$-action. For any 
$i \in \Z$, we define the {\sl equivariant algebraic cobordism} of $X$ to be 
\[
\Omega^G_i(X) = {\underset {j} \varprojlim} \ \Omega^G_i(X)_j.
\]
\end{defn}
The reader should note from the above definition that unlike the ordinary
cobordism, the equivariant algebraic cobordism $\Omega^G_i(X)$ can be 
non-zero for any $i \in \Z$. We set 
\[
\Omega^G_*(X) = {\underset{i \in \Z} \bigoplus} \ \Omega^G_i(X).
\]
If $X$ is an equi-dimensional $k$-scheme with $G$-action, we let
$\Omega^i_G(X) = \Omega^G_{d-i}(X)$ and $\Omega^*_G(X) =
{\underset{i \in \Z} \oplus} \ \Omega^i_G(X)$. We shall denote the 
equivariant cobordism $\Omega^*_G(k)$ of the ground field by $\Omega^*(BG)$.
It is also called the algebraic cobordism of the classifying space of $G$.

\begin{remk}\label{remk:G-trivial}
If $G$ is the trivial group, we can take the good pair $(V_j, V_j)$ for every
$j$ where $V_j$ is any $l$-dimensional $k$-vector space. In that case,
we get $\Omega_{i+l}(X \stackrel{G}{\times} V_j) \cong 
\Omega_{i+l}(X {\times} V_j)$ which is isomorphic to $\Omega_{i}(X)$ by the
homotopy invariance of the non-equivariant cobordism. Moreover,
$F_{d+l-j}\Omega_{i+l}(X \times V_j)$ is isomorphic to $F_{d-j}\Omega_i(X)$
by Lemma~\ref{lem:Niv-Hi} and this last term is zero for all large $j$.
In particular, we see from ~\eqref{eqn:E-cob*} and the definition of the
equivariant cobordism that there is a canonical isomorphism 
$\Omega^G_*(X) \xrightarrow{\cong} \Omega_*(X)$.
\end{remk}
\begin{remk}\label{remk:Csmooth}
It is easy to check from the above definition of the niveau filtration 
that if $X$ is a smooth and irreducible $k$-scheme of dimension $d$,
then $F_j\Omega_i(X) = F^{d-j}\Omega^{d-i}(X)$, where $F^{\bullet}\Omega^*(X)$ 
is the coniveau filtration used in \cite{DD}. Furthermore, one also 
checks in this case that if $G$ acts on $X$, then
\begin{equation}\label{eqn:Csmooth1}
\Omega^i_G(X) = {\underset {j} \varprojlim} \
\frac{\Omega^{i}\left({X\stackrel{G} {\times} U_j}\right)}
{F^{j}\Omega^{i}\left({X\stackrel{G} {\times} U_j}\right)},
\end{equation}
where $(V_j, U_j)$ is a good pair corresponding to any $j \ge 0$.
Thus the above definition~\ref{defn:ECob} of the equivariant cobordism
coincides with that of \cite{DD} for smooth schemes.
\end{remk}
\begin{remk}\label{remk:C-Chow}
As is evident from the above definition (see Example~\ref{exms:BT}), the 
equivariant cobordism 
$\Omega^G_i(X)$ can not in general be computed in terms of the algebraic
cobordism of one single mixed space. This makes these groups more
complicated to compute than the equivariant Chow groups, which can be
computed in terms of a single mixed space. This also motivates one to ask
if the equivariant cobordism can be defined in such a way that they can
be calculated using one single mixed space in a given degree.
It follows however from Lemmas~\ref{lem:ECob1} and ~\ref{lem:ECob2} that for a
given $i$ and $j$, each component of the projective system
${\left\{\Omega^G_i(X)_j\right\}}_{j \ge 0}$ can be computed using a single
mixed space.
\end{remk} 
\subsection{Change of groups}
If $H \subset G$ is a closed subgroup of dimension $h$, then any 
$l$-dimensional good pair 
$(V_j, U_j)$ for $G$-action is also a good pair for the induced $H$-action. 
Moreover, for any $X \in \sV_G$ of dimension $d$, 
$X \stackrel{H}{\times} U_j \to X \stackrel{G}{\times} U_j$
is an \'etale locally trivial $G/H$-fibration and hence a smooth map 
({\sl cf.} \cite[Theorem~6.8]{Borel}) of relative dimension $g-h$. 
This induces the inverse system of pull-back maps
 \[
\Omega^G_i(X)_j = \frac{\Omega_{i+l-g}\left({X\stackrel{G} {\times} U_{j}}\right)}
{F_{d+l-g-j}\Omega_{i+l-g}\left({X\stackrel{G} {\times} U_{j}}\right)}
\to \frac{\Omega_{i+l-h}\left({X\stackrel{H} {\times} U_{j}}\right)}
{F_{d+l-h-j}\Omega_{i+l-h}\left({X\stackrel{H} {\times} U_{j}}\right)}
= \Omega^H_i(X)_j 
\]
and hence a natural restriction map
\begin{equation}\label{eqn:res}
r^G_{H,X} : \Omega^G_*(X) \to \Omega^H_*(X).
\end{equation}
Taking $H = \{1\}$ and using Remark~\ref{remk:G-trivial}, we get the
{\sl forgetful} map 
\begin{equation}\label{eqn:resf}
r^G_X : \Omega^G_*(X) \to \Omega_*(X)
\end{equation}
from the equivariant to the non-equivariant cobordism. 
Since $r^G_{H,X}$ is obtained as a pull-back under the smooth map, it commutes
with any projective push-forward and smooth pull-back ({\sl cf.} 
Theorem~\ref{thm:Basic}). We remark here that
although the definition of $r^G_{H,X}$ uses a good pair, it is easy to see
as in Lemma~\ref{lem:ECob1} that it is independent of the choice of such
good pairs. 

\subsection{Fundamental class of cobordism cycles}
Let $X \in \sV_G$ and let $Y \xrightarrow{f} X$ be a morphism in $\sV_G$
such that $Y$ is smooth of dimension $d$ and $f$ is projective. For any
$j \ge 0$ and any $l$-dimensional good pair $(V_j, U_j)$, 
$[Y_G \xrightarrow{f_G} X_G]$ is an ordinary cobordism cycle of dimension
$d+l-g$ by Lemma~\ref{lem:Proj-Q} and hence defines an element 
$\alpha_j \in {\Omega^G_d(X)}_j$. Moreover, it is evident that the image of 
$\alpha_{j'}$ is $\alpha_j$ for $j' \ge j$. Hence we get a unique element
$\alpha \in \Omega^G_d(X)$, called the {\sl G-equivariant fundamental class}
of the cobordism cycle $[Y \xrightarrow{f} X]$. We also see from this
more generally that if $[Y \xrightarrow{f} X, L_1, \cdots, L_r]$ is as
above with each $L_i$ a $G$-equivariant line bundle on $Y$, then this defines
a unique class in $\Omega^G_{d-r}(X)$. It is interesting question to ask under 
what conditions on the group $G$, the equivariant cobordism group 
$\Omega^G_*(X)$ is generated by the fundamental classes of $G$-equivariant
cobordism cycles on $X$. It turns out that this question indeed has a positive
answer if $G$ is a split torus by \cite[Theorem~4.11]{Krishna5}.

\section{Localization sequence and other properties}
\label{section:BasicP}
In this section, we establish the localization sequence for the equivariant
cobordism using Theorem~\ref{thm:RLS}. 
We also prove other basic properties of the equivariant algebraic
cobordism which are analogous to the non-equivariant case. 

\begin{thm}\label{thm:ELS}
Let $X$ be a $G$-scheme of dimension $d$ and let $Z \subset X$ be a 
$G$-invariant closed subscheme with the complement $U$. 
Then there is an exact sequence
\[
\Omega^G_*(Z) \to \Omega^G_*(X) \to \Omega^G_*(U) \to 0.
\]
\end{thm}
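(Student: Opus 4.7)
The strategy is to establish the desired exact sequence at each finite level of the inverse system defining $\Omega^G_*(-)$, and then pass to the inverse limit. The essential tool is the refined localization Theorem~\ref{thm:RLS}, introduced precisely to overcome the non-right-exactness of $\varprojlim$ that obstructed the earlier argument of \cite{DD}.

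Fix $i \in \Z$. For each $j \ge 0$, pick an $l$-dimensional good pair $(V_j, U_j)$; by Lemma~\ref{lem:sch} the same pair works for $Z$ and $U$. Write $X_G, Z_G, U_G$ for the associated mixed quotients, so that $Z_G \hookrightarrow X_G$ is closed of dimension $e+l-g$ (with $e = \dim Z \le d$) and $U_G$ is its open complement of dimension $d+l-g$. Applying the refined localization Theorem~\ref{thm:RLS} at filtration index $d+l-g-j$ together with the ordinary Levine--Morel localization (Theorem~\ref{thm:Levine-M}), and using Lemma~\ref{lem:Niv*} to see that the pushforward from $Z_G$ respects the appropriate filtrations (since $e+l-g-j \le d+l-g-j$), one obtains at each level the exact sequence
\[
\Omega^G_i(Z)_j \to \Omega^G_i(X)_j \to \Omega^G_i(U)_j \to 0.
\]
Surjectivity on the right is immediate from Theorem~\ref{thm:Levine-M}. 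For exactness at the middle: if $\xi \in \Omega_{i+l-g}(X_G)$ restricts to a class in $F_{d+l-g-j}\Omega_{i+l-g}(U_G)$, then Theorem~\ref{thm:RLS} produces $\xi' \in F_{d+l-g-j}\Omega_{i+l-g}(X_G)$ with the same restriction, and $\xi - \xi'$ then lifts to $\Omega_{i+l-g}(Z_G)$ by the ordinary localization.

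These level-$j$ sequences are functorial in $j$, and by Lemma~\ref{lem:ECob2} the transition maps on all three systems are surjective. Setting $K_j := \ker(\Omega^G_i(X)_j \to \Omega^G_i(U)_j)$, the surjection $\Omega^G_i(Z)_j \twoheadrightarrow K_j$ forces $\{K_j\}$ also to have surjective transitions, so Mittag--Leffler delivers the surjection $\Omega^G_i(X) \twoheadrightarrow \Omega^G_i(U)$ together with the identification of its kernel with $\varprojlim_j K_j$. What remains is to show $\Omega^G_i(Z) \twoheadrightarrow \varprojlim_j K_j$: given a compatible family $(k_j)$, one would lift each $k_j$ to some $a_j \in \Omega^G_i(Z)_j$ and modify the $a_j$ inductively to produce a compatible family. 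This succeeds provided the kernel system $N_j := \ker(\Omega^G_i(Z)_j \to K_j)$ satisfies Mittag--Leffler.

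Verifying Mittag--Leffler for $\{N_j\}$ is the main obstacle. The plan is yet another appeal to Theorem~\ref{thm:RLS}: an element of $N_j$ is a cobordism class on $Z_G$ whose pushforward lies in a prescribed niveau piece of $\Omega_*(X_G)$, and the refined localization, combined with Lemma~\ref{lem:Niv-Hi} to compare mixed quotients arising from enlarged good pairs, should allow one to lift such representatives from $N_j$ to $N_{j'}$ for all sufficiently large $j'$. The conceptual payoff of Theorem~\ref{thm:RLS} is that every transition map relevant to the argument becomes either surjective or Mittag--Leffler, so the exact sequence established at each finite stage survives the passage to $\varprojlim$.
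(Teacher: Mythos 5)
Your proposal tracks the paper's route closely through the level-$j$ exact sequence and the reduction to a Mittag--Leffler verification, but it has a genuine gap at precisely the point you yourself flag as ``the main obstacle.'' After correctly establishing that $\{K_j\}$ has surjective transitions (so that the inverse limit of $0\to K_j\to\Omega^G_i(X)_j\to\Omega^G_i(U)_j\to 0$ is exact), you reduce the theorem to proving the Mittag--Leffler property for $N_j := \ker\left(\Omega^G_i(Z)_j \to K_j\right)$ and then supply only a plan, not an argument: ``the refined localization \ldots should allow one to lift such representatives from $N_j$ to $N_{j'}$.'' But surjective transitions on $\{\Omega^G_i(Z)_j\}$ and $\{\Omega^G_i(X)_j\}$ by themselves give nothing about the kernel of a morphism between them --- this is exactly the functoriality that inverse systems fail to have --- so the Mittag--Leffler claim for $\{N_j\}$ requires a real argument, and none is supplied.

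The paper closes this gap by decomposing the surjection $\Omega^G_i(Z)_j \twoheadrightarrow K_j$ into two explicit surjections, thereby exhibiting $N_j$ as an extension of two systems that are visibly Mittag--Leffler. Writing $j' = d+l-g-j$ and $d' = \dim Z \le d$, one factors through the intermediate quotient $\Omega_{i+l-g}(Z_G)/F_{j'}\Omega_{i+l-g}(Z_G)$. The kernel of the first surjection is $F_{j'}\Omega_{i+l-g}(Z_G)/F_{d'+l-g-j}\Omega_{i+l-g}(Z_G)$, and the kernel of the second is $E/E_{j'}$, where $E = \ker\left(\Omega_{i+l-g}(Z_G)\to\Omega_{i+l-g}(X_G)\right)$ and $E_{j'} = E\cap F_{j'}\Omega_{i+l-g}(Z_G)$. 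The role of the refined localization Theorem~\ref{thm:RLS} is precisely to make the filtration-compatible diagrams on $Z_G$, $X_G$, $U_G$ work out, yielding the two short exact sequences ~\eqref{eqn:ELS3} and ~\eqref{eqn:ELS4} of the paper whose left-hand terms have evident quotient surjections as transition maps. That concrete decomposition --- not a direct lifting of elements of $N_j$ --- is the actual content of the limit passage, and it is the step your sketch elides.
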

\begin{proof}
We fix integers $i \in \Z$ and $j \ge 0$ and first show that the sequence
\begin{equation}\label{eqn:ELS1}
{\Omega^G_i(Z)}_j \to {\Omega^G_i(X)}_j \to {\Omega^G_i(U)}_j \to 0
\end{equation}
is exact.
Choose a good pair $(V_j, U_j)$ of dimension $l$ for $j$. Then we see that 
$Z_G \subset X_G$ is a closed subscheme with the complement $U_G$. 
Hence by applying Theorem~\ref{thm:RLS} at the appropriate levels of the 
niveau filtration and taking the quotients, we get an exact sequence
\[
\frac{\Omega_{i+l-g}(Z_G)}{F_{d+l-g-j} \Omega_{i+l-g}(Z_G)}
\to \frac{\Omega_{i+l-g}(X_G)}{F_{d+l-g-j} \Omega_{i+l-g}(X_G)} \to
\frac{\Omega_{i+l-g}(U_G)}{F_{d+l-g-j} \Omega_{i+l-g}(U_G)} \to 0.
\]
If $d' = \dim(Z)$, then $F_{d'+l-g-j} \Omega_{i+l-g}(Z_G) \subset 
F_{d+l-g-j} \Omega_{i+l-g}(Z_G)$ and hence by Lemma~\ref{lem:ECob1},
we get an exact sequence
\begin{equation}\label{eqn:ELS2}
{\Omega^G_i(Z)}_j \to {\Omega^G_i(X)}_j \to 
{\Omega^G_i(U)}_j \to 0.
\end{equation}
Now, we let for any $j' \ge 0$, $\ov{F_{j'}\Omega_{i+l-g}(Z_G)}
= {\rm Image}\left(F_{j'}\Omega_{i+l-g}(Z_G) \to F_{j'}\Omega_{i+l-g}(X_G)\right)$ and
$E_{j'} =  
{\rm Ker}\left(F_{j'}\Omega_{i+l-g}(Z_G) \to F_{j'}\Omega_{i+l-g}(X_G)\right)$.
This gives a filtration 
\[
0 = E_{-1} \subseteq E_0 \subseteq \cdots \subseteq E_{d'} = E 
= {\rm Ker}\left(\Omega_{i+l-g}(Z_G) \to \Omega_{i+l-g}(X_G)\right).
\]
Then by Theorem~\ref{thm:RLS}, we get for every $j' \ge 0$, the exact
sequences
\begin{equation}\label{eqn:ELS3}
0 \to \frac{E}{E_{j'}} \to \frac{\Omega_{i+l-g}(Z_G)}{F_{j'}\Omega_{i+l-g}(Z_G)}
\to \frac{\ov{\Omega_{i+l-g}(Z_G)}}{\ov{F_{j'}\Omega_{i+l-g}(Z_G)}} \to 0,
\end{equation}
\begin{equation}\label{eqn:ELS4}
0 \to \frac{\ov{\Omega_{i+l-g}(Z_G)}}{\ov{F_{j'}\Omega_{i+l-g}(Z_G)}} \to
\frac{\Omega_{i+l-g}(X_G)}{F_{j'}\Omega_{i+l-g}(X_G)} \to 
\frac{\Omega_{i+l-g}(U_G)}{F_{j'}\Omega_{i+l-g}(U_G)} \to 0.
\end{equation}
Since $\{E/{E_{j'}}\}$ and 
$\left\{\frac{\ov{\Omega_{i+l-g}(Z_G)}}{\ov{F_{j'}\Omega_{i+l-g}(Z_G)}}\right\}$
are the inverse systems of surjective maps and hence satisfy the 
Mittag-Leffler(ML) condition, we identify the various terms in the exact 
sequences by Lemma~\ref{lem:ECob1} to get an exact sequence
\begin{equation}\label{eqn:ELS5}
{\underset{j} \varprojlim} \
\frac{\Omega_{i+l-g}(Z_G)}{F_{d+l-g-j}\Omega_{i+l-g}(Z_G)} \to 
{\underset{j} \varprojlim} \ {\Omega^G_i(X)}_j \to
{\underset{j} \varprojlim} \ {\Omega^G_i(U)}_j \to 0.
\end{equation}
Finally, the exact sequences
\[
0 \to \frac{F_{d+l-g-j}\Omega_{i+l-g}(Z_G)}{F_{d'+l-g-j}\Omega_{i+l-g}(Z_G)} 
\to \frac{\Omega_{i+l-g}(Z_G)}{F_{d'+l-g-j}\Omega_{i+l-g}(Z_G)} \to 
\frac{\Omega_{i+l-g}(Z_G)}{F_{d+l-g-j}\Omega_{i+l-g}(Z_G)} \to 0
\]
and the Mittag-Leffler property of the first terms imply that
\[
{\underset{j} \varprojlim} \ {\Omega^G_i(Z)}_j \surj 
{\underset{j} \varprojlim} \ 
\frac{\Omega_{i+l-g}(Z_G)}{F_{d+l-g-j}\Omega_{i+l-g}(Z_G)}.
\]
Combining this with ~\eqref{eqn:ELS5}, we get the desired
exact sequence
\[
{\underset{j} \varprojlim} \ {\Omega^G_i(Z)}_j \to  
{\underset{j} \varprojlim} \ {\Omega^G_i(X)}_j \to 
{\underset{j} \varprojlim} \ {\Omega^G_i(U)}_j \to 0,  
\]
which proves the theorem.
\end{proof}
\begin{remk}\label{remk:incomplete}
If $X$ is a smooth scheme, a proof of the above localization
sequence is given in \cite[Theorem~7.3]{DD}. However, the proof given there
is not complete, since it only proves the exact sequence ~\eqref{eqn:ELS2} for
a fixed $j$ by a different method. As the reader will observe, this
exact sequence does not imply the localization theorem. The crucial ingredient
in the above proof (including the direct proof of ~\eqref{eqn:ELS2}) is the
refined localization sequence of Theorem~\ref{thm:RLS}.
\end{remk}

Before we prove the other properties of the equivariant cobordism,
we mention the following elementary result.
\begin{lem}\label{lem:Proj-Q}
Let $f : X \to Y$ be a projective $G$-equivariant map in $\sV_G$ such that 
$G$ acts freely on $X$ and $Y$. Then the induced map $\ov{f} :
X/G \to Y/G$ of quotients is also projective.
\end{lem}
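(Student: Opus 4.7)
The plan is to exhibit $\ov{f}$ as the composition of a closed immersion and a projective bundle projection. The strategy is to first produce a $G$-equivariant closed immersion of $X$ into a relative projective bundle over $Y$, and then descend this entire picture along the free quotients $X \to X/G$ and $Y \to Y/G$.

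First, I would use the linearity hypothesis on the $G$-actions to construct a $G$-equivariant relative projective embedding. Since $f$ is projective, there exists an $f$-very ample line bundle $L$ on $X$. The $G$-action on $X$ being linear means $X$ admits a $G$-equivariant ample line bundle, and a standard $G$-linearization argument shows that after replacing $L$ by a sufficiently large tensor power (and possibly twisting by the pullback of a $G$-equivariant ample bundle on $Y$), one may assume $L$ itself is $G$-equivariant while still being $f$-very ample. The sheaf $\sF := f_*L$ is then a $G$-equivariant coherent sheaf on $Y$, and the canonical evaluation map yields a $G$-equivariant closed immersion $\iota : X \hookrightarrow \P_Y(\sF)$ of $Y$-schemes.

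Second, since $G$ acts freely on $Y$ with quasi-projective quotient (Lemma~\ref{lem:sch}), the map $Y \to Y/G$ is a principal $G$-bundle, hence faithfully flat. By equivariant descent along this principal bundle, $\sF$ descends to a coherent sheaf $\ov{\sF}$ on $Y/G$, and the formation of the projective bundle is compatible with the quotient: $\P_Y(\sF)/G \cong \P_{Y/G}(\ov{\sF})$. Analogously, because $G$ acts freely on $X$ and $\iota$ is $G$-equivariant, $\iota$ descends to a closed immersion $\ov{\iota} : X/G \hookrightarrow \P_{Y/G}(\ov{\sF})$ over $Y/G$. Composing $\ov{\iota}$ with the structural projection $\P_{Y/G}(\ov{\sF}) \to Y/G$ then gives the desired factorization of $\ov{f}$ as a closed immersion followed by a projective morphism, whence $\ov{f}$ is projective.

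The main obstacle will be the equivariant aspect of the first step, namely producing a single line bundle on $X$ that is simultaneously $G$-linearized and $f$-very ample. The linearity hypothesis supplies a $G$-equivariant ample bundle on $X$, while projectivity of $f$ supplies some (possibly non-equivariant) $f$-ample bundle, and the task is to merge these two properties into one bundle; this is precisely the place where the \emph{linearity} of the $G$-action (as fixed in the notational conventions of Section~\ref{section:AC}) is used in an essential way. Once this equivariant embedding is in place, the descent in the remaining steps is routine, since closed immersions, projective bundles, and coherent sheaves all descend along faithfully flat principal $G$-bundles.
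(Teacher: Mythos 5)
Your proposal takes a genuinely different and more constructive route than the paper's proof, so let me compare them and also flag one gap in your first step.

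The paper's argument is short and abstract: it observes that the commutative square with vertical arrows $f$ and $\ov{f}$ and horizontal arrows $X \to X/G$, $Y \to Y/G$ is Cartesian (because $X \to Y \times_{Y/G} X/G$ is a $G$-equivariant map of principal $G$-bundles over $X/G$, hence an isomorphism). Properness then descends along the smooth surjective map $Y \to Y/G$, so $\ov{f}$ is proper; finally a proper morphism between quasi-projective $k$-schemes is quasi-projective, hence projective. Your approach instead tries to descend a concrete relative projective embedding $X \hookrightarrow \P_Y(\sF)$. Both strategies lead to the same conclusion, but the paper's route bypasses all sheaf-theoretic bookkeeping by invoking fppf descent of properness, while yours has the advantage of producing the projective factorization of $\ov{f}$ explicitly. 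Your second step (descent of $\sF$ and of the closed immersion along the principal bundle $Y \to Y/G$, with $\P_Y(\sF)/G \cong \P_{Y/G}(\ov{\sF})$) is correct and routine as you say.

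The gap is in the first step. You start from an arbitrary $f$-very ample line bundle $L$ and assert that a ``standard $G$-linearization argument'' together with a twist by the pullback of an equivariant ample bundle on $Y$ makes some power of $L$ $G$-equivariant. This is not justified: the classical linearization theorems (Sumihiro, Mumford) apply to normal varieties with connected $G$, and in general the map $\Pic(X/G) \to \Pic(X)^G$ need not hit a power of an arbitrary $L$. Twisting by $f^*$ of an equivariant bundle on $Y$ changes ampleness properties but does nothing to confer a $G$-linearization on $L$. Fortunately the obstacle you identified is illusory: the linearity hypothesis already hands you a $G$-equivariant \emph{absolutely} ample line bundle $A$ on $X$, and any ample line bundle on $X$ is automatically $f$-ample (since $f$ is a separated morphism of finite type with $Y$ quasi-compact). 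Because $f$ is proper, some power $A^{\otimes n}$ is then $f$-very ample, and it is still $G$-equivariant. So you should begin directly with $A$ rather than trying to linearize an unrelated $f$-very ample bundle; with that replacement your argument goes through.
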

\begin{proof}
It follows from our assumption and Lemma~\ref{lem:sch} that $\ov{f}:
X' = X/G \to Y/G = Y'$ is a morphism in $\sV_k$. Furthermore, the square
\[
\xymatrix@C.9pc{
X \ar[r] \ar[d]_{f} & X' \ar[d]^{\ov{f}} \\
Y \ar[r] & Y'}
\]
is Cartesian.
Since both the horizontal maps are the principal $G$-bundles, they are 
smooth and surjective. Since proper maps have smooth descent (in fact
fpqc descent), we see that $\ov{f} : X' \to Y'$ is proper. Since
these schemes are quasi-projective, we leave it as an exercise to show that
$\ov{f}$ is also quasi-projective and hence must be projective.
\end{proof}

\begin{thm}\label{thm:Basic}
The equivariant algebraic cobordism satisfies the following properties. \\
$(i)$ {\sl Functoriality :} The assignment $X \mapsto \Omega_*(X)$ is
covariant for projective maps and contravariant for smooth maps in
$\sV_G$. It is also contravariant for l.c.i. morphisms in $\sV_G$. 
Moreover, for a fiber diagram 
\[
\xymatrix@C.7pc{
X' \ar[r]^{g'} \ar[d]_{f'} & X \ar[d]^{f} \\
Y' \ar[r]_{g} & Y}
\]
in $\sV_G$ with $f$ projective and $g$ smooth, one has 
$g^* \circ f_* = {f'}_* \circ {g'}^* : \Omega^G_*(X) \to \Omega^G_*(Y')$.
\\
$(ii) \ Homotopy :$  If $f : E \to X$ is a $G$-equivariant vector bundle,
then $f^*: \Omega^G_*(X) \xrightarrow{\cong} \Omega^G_*(E)$. \\
$(iii) \ Chern \ classes :$ For any $G$-equivariant vector bundle $E
\xrightarrow{f} X$ of rank $r$, there are equivariant Chern class operators
$c^G_m(E) : \Omega^G_*(X) \to \Omega^G_{*-m}(X)$ for $0 \le m \le r$ with
$c^G_0(E) = 1$. These Chern classes 
have same functoriality properties as in the non-equivariant case. 
Moreover, they satisfy the Whitney sum formula. \\
$(iv) \ Free \ action :$ If $G$ acts freely on $X$ with quotient $Y$, then
$\Omega^G_*(X) \xrightarrow{\cong} \Omega_*(Y)$. \\
$(v) \ Exterior \ Product :$ There is a natural product map
\[
\Omega^G_i(X) \otimes_{\Z} \Omega^G_{i'}(X') \to \Omega^G_{i+i'}(X \times X').
\]
In particular, $\Omega^G_*(k)$ is a graded algebra and $\Omega^G_*(X)$ is
a graded $\Omega^G_*(k)$-module for every $X \in \sV_G$. \\
$(vi) \ Projection \ formula :$ For a projective map $f : X' \to X$ in
$\sV^S_G$, one has for $x \in \Omega^G_*(X)$ and $x' \in \Omega^G_*(X')$,
the formula : $f_*\left(x' \cdot f^*(x)\right) = f_*(x') \cdot x$. \\
\end{thm}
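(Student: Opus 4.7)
The overall strategy is that for each fixed $j \ge 0$ and each $l$-dimensional good pair $(V_j, U_j)$, every operation in (i)--(vi) is induced by the corresponding operation on the Levine--Morel cobordism of the mixed space $X_G = X \times^G U_j$. What needs to be checked in each case is that the operation on $\Omega_*(X_G)$ respects (or appropriately shifts) the niveau filtration, so that it descends to a well-defined map on the finite quotient $\Omega^G_i(X)_j$; once this is done, the induced maps on the inverse system of Definition~\ref{defn:ECob} are compatible in $j$, and we pass to the limit.

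For (i), a projective $G$-equivariant map $f : X \to Y$ produces a projective map $f_G : X_G \to Y_G$ by Lemma~\ref{lem:Proj-Q}, and the induced push-forward preserves the niveau filtration by Lemma~\ref{lem:Niv*}. A smooth $G$-equivariant map $g : X' \to X$ of relative dimension $r$ descends to a smooth map $g_G$ of the same relative dimension whose pull-back shifts the niveau filtration by $r$; this shift exactly matches the change in $\dim X$ in the indices of~\eqref{eqn:E-cob*}, so $g_G^*$ descends. For l.c.i.\ morphisms one uses the standard factorization into a regular closed embedding and a smooth map. The base-change identity in a Cartesian square is inherited from the non-equivariant one on the induced Cartesian square of mixed spaces. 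Property (ii) is then immediate from Lemma~\ref{lem:Niv-Hi} applied to the vector bundle $E_G \to X_G$, and (vi) follows by applying the non-equivariant projection formula to $f_G : X'_G \to X_G$ at each finite level.

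For (iv), the free action hypothesis makes $X_G \to Y$ \'etale-locally trivial with fiber $U_j$, realized as an open subscheme of the rank-$l$ vector bundle $X \times^G V_j \to Y$ with complement of codimension $\ge j$. Homotopy invariance (Theorem~\ref{thm:Levine-M}) identifies $\Omega_*(Y)$ with $\Omega_*(X \times^G V_j)$ up to a degree shift by $l$, and Corollary~\ref{cor:Niv-open} then identifies $\Omega^G_i(X)_j$ with $\Omega_i(Y)/F_{d_Y - j}\Omega_i(Y)$; since the latter vanishes for $j$ large enough, the inverse limit collapses to $\Omega_*(Y)$. For (iii), one uses that $E_G \to X_G$ is a non-equivariant rank-$r$ vector bundle and that its Chern class operators preserve the niveau filtration: $F_j\Omega_*(X_G)$ is generated by push-forwards from closed subschemes of dimension $\le j$, and $c_m(E_G)$ commutes with projective push-forwards. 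These operators thus descend to $\Omega^G_*(X)_j$, and independence of the good pair is verified exactly as in Lemma~\ref{lem:ECob1}, using that Chern classes commute with smooth pull-back and with the comparison isomorphisms constructed there; functoriality and the Whitney sum formula are inherited directly from the non-equivariant theory.

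For (v), given good pairs $(V_j, U_j)$ and $(V'_j, U'_j)$ for $G$, the non-equivariant exterior product gives a map $\Omega_*(X_G) \otimes \Omega_*(X'_G) \to \Omega_*(X_G \times X'_G)$, where $X_G \times X'_G = (X \times X') \times^{G \times G} (U_j \times U'_j)$; taking limits produces $\Omega^G_*(X) \otimes \Omega^G_*(X') \to \Omega^{G \times G}_*(X \times X')$. Composing with the restriction map $r^{G \times G}_{G, X \times X'}$ of~\eqref{eqn:res} along the diagonal $G \hookrightarrow G \times G$ gives the required product into $\Omega^G_*(X \times X')$. The main obstacle I anticipate lies in the filtration bookkeeping for (iii) and especially (v): one must verify that the non-equivariant exterior product sends $F_j\Omega_*(A) \otimes F_{j'}\Omega_*(B)$ into $F_{j+j'}\Omega_*(A \times B)$, which follows from the description of $F_\bullet$ via push-forwards from closed subschemes and the additivity of dimensions under products, together with matching the various dimension-dependent shifts appearing in the denominators of~\eqref{eqn:E-cob*} on both sides.
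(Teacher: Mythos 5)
Your overall strategy---defining each operation at every finite level $j$ of the inverse system via the corresponding non-equivariant operation on $\Omega_*\left(X \stackrel{G}{\times} U_j\right)$, checking compatibility with the niveau filtration, and passing to the limit---is the same as the paper's, and your arguments for parts (i), (ii), (iii), (iv) and (vi) coincide with the paper's in all essentials (the paper likewise invokes Lemma~\ref{lem:Proj-Q} and Lemma~\ref{lem:Niv*} for (i), Lemma~\ref{lem:Niv-Hi} for (ii), and the projection formula for Chern class operators for (iii)). The one place you genuinely depart from the paper is part (v). The paper constructs the exterior product by hand: given cycles $\alpha = [Y \to X_G]$ and $\alpha' = [Y' \to X'_G]$, it lifts them along the principal $G$-bundles to $G$-equivariant cycles $\widetilde{Y}$, $\widetilde{Y'}$ over $X \times U_j$ and $X' \times U_j$, sets $\alpha \star \alpha' = [\widetilde{Y} \stackrel{G}{\times} \widetilde{Y'} \to (X\times X')_G]$, and then verifies well-definedness by checking compatibility with the double point degeneration relations of Theorem~\ref{thm:Levine-P}. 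You instead apply the already-established non-equivariant exterior product to land in $\Omega_*(X_G \times X'_G)$, recognize $X_G \times X'_G$ as a mixed space for $G \times G$, take limits to get a map into $\Omega^{G\times G}_*(X\times X')$, and finish with the change-of-groups map $r^{G\times G}_{G, X\times X'}$ of~\eqref{eqn:res} along the diagonal $G \hookrightarrow G\times G$. This is a valid alternative: unwinding it at a fixed level $j$ recovers exactly $[\widetilde{Y} \stackrel{G}{\times} \widetilde{Y'} \to (X\times X')_G]$, but your route avoids the explicit relational check, since both ingredients (the non-equivariant exterior product and the smooth change-of-groups pullback) are already known to be well defined on cobordism classes; the trade-off is a dependence on the restriction map of Section~4. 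You also correctly isolate the key filtration estimate needed either way, namely that the exterior product carries $F_a\Omega_*\otimes F_b\Omega_*$ into $F_{a+b}\Omega_*$. One minor index slip in (iv): the level-$j$ identification should read $\Omega^G_i(X)_j \cong \Omega_{i-g}(Y)/F_{d_Y - j}\Omega_{i-g}(Y)$ rather than with $\Omega_i(Y)$, since $\dim X = \dim Y + g$ while $\dim X_G = \dim Y + l$; the conclusion is unaffected.
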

\begin{proof}
Assume that the dimensions of $X$ and $Y$ are $m$ and $n$ respectively
and let $d = m-n$ be the relative dimension of a projective $G$-equivariant
morphism $f: X \to Y$.
For a fixed $j \ge 0$, let $(V_j, U_j)$ be an $l$-dimensional good pair for
$j$. 
Since $f$ is projective, Lemma~\ref{lem:Proj-Q} implies that
$\ov{f} : X_G \to Y_G$ is projective and hence by Theorem~\ref{thm:Levine-M} and
Lemma~\ref{lem:Niv*}, there is a push-forward map
\[
\frac{\Omega_{i+l-g}(X_G)}{F_{m+l-g-j}\Omega_{i+l-g}(X_G)}
\to \frac{\Omega_{i+l-g}(Y_G)}{F_{m+l-g-j}\Omega_{i+l-g}(Y_G)}
= \frac{\Omega_{i+l-g}(Y_G)}{F_{n+l-g-(j-d)}\Omega_{i+l-g}(Y_G)}.
\]
In particular, we get a compatible system of maps
\[
{\Omega^G_i(X)}_{j+d} \to {\Omega^G_i(Y)}_j.
\]
Taking the inverse limits, one gets the desired push-forward map
$\Omega^G_i(X) \xrightarrow{f_*} \Omega^G_i(Y)$.

If $f$ is smooth of relative dimension $d$, then $\ov{f} : X_G \to Y_G$
is also smooth of same relative dimension. Hence, we get a compatible system of
pull-back maps ${\Omega^G_i(Y)}_j \xrightarrow{\ov{f}}^* {\Omega^G_{i+d}(X)}_j$.
Taking the inverse limit, we get the desired pull-back map of the
equivariant algebraic cobordism groups. If $f$ is a l.c.i. morphism of 
$G$-schemes, the same proof applies using the existence of similar map in the
non-equivariant case. The required commutativity of the pull-back and 
push-forward maps follows exactly in the same way from the corresponding result
for the non-equivariant cobordism groups. 

To prove the homotopy property, let $E \xrightarrow{f} X$ be a $G$-equivariant
vector bundle of rank $r$. For any $j \ge 0$, let $(V_j, U_j)$ be a 
good pair for $j$. Then the map of mixed quotients $E_G \to X_G$ is
a vector bundle of rank $r$ ({\sl cf.} \cite[Lemma~1]{EG}). Hence by 
Lemma~\ref{lem:Niv-Hi}, the pull-back map ${\Omega^G_i(X)}_j \to
{\Omega^G_{i+r}(E)}_j$ is an isomorphism. If $j' \ge j$, then we can choose
a common good pair for both $j$ and $j'$. Hence, we a pull-back map
of the inverse systems $\{{\Omega^G_i(X)}_j\} \to \{{\Omega^G_{i+r}(E)}_j\}$
which is an isomorphism at each level. Hence $f^* : \Omega^G_i(X) \to
\Omega^G_{i+r}(E)$ is an isomorphism.

To define the Chern classes of an equivariant vector bundle $E$ of rank
$r$, we choose an $l$-dimensional good pair $(V_j,U_j)$ and
consider the vector bundle $E_G \to X_G$ as above and let
$c^G_{m, j} : \Omega_{i+l-g}(X_G) \to \Omega_{i+l-g-m}(X_G)$ be the non-equivariant
Chern class as in \cite[4.1.7]{LM}.  
For a closed subscheme $Z {\overset{\iota}\inj} X_G$, the projection
formula for the non-equivariant cobordism 
\begin{equation}\label{eqn:PF-bundle}
c^G_{m,j}(E_G)\circ {\iota}_*  = {\iota}_* \circ 
\left(c^G_{m,j}\left({\iota}^*(E_G)\right)\right)
\end{equation}
implies that $c^G_{m,j}(E_G)$ descends to maps
$c^G_{m,j} : {\Omega^G_{i}(X)}_{j} \to {\Omega^G_{i-m}(X)}_j$.

One shows as in Lemma~\ref{lem:ECob1}
that this is independent of the choice of the good pairs. Furthermore,
choosing a common good pair for $j' \ge j$, we see that 
$c^G_{m, j}$ actually defines a map of the inverse systems. Taking the 
inverse limit, we get the Chern classes $c^G_m(E) :
\Omega^G_i(X) \to \Omega^G_{i-m}(X)$ for $0 \le m \le r$ with $c^G_0(E) = 1$.
The functoriality and the Whitney sum formula for the equivariant
Chern classes are easily proved along the above lines using the
analogous properties of the non-equivariant Chern classes. 

The statement about the free action follows from \cite[Lemma~7.2]{DD} and
Remark~\ref{remk:G-trivial}. 

We now show the existence of the exterior product of the equivariant cobordism
which requires some work. Let $d$ and $d'$ be the dimensions of
$X$ and $X'$ respectively. We first define maps
\begin{equation}\label{eqn:EP1}
{\Omega^G_i(X)}_j \otimes {\Omega^G_{i'}(X')}_{j} \to
 {\Omega^G_{i+i'}(X \times X')}_{j} \ \ {\rm for} \ j \ge 0.
\end{equation}
Let $(V_j, U_j)$ be an $l$-dimensional good pair for $j$ and let $\alpha = 
[Y \xrightarrow{f} X]$ and ${\alpha}' = [Y' \xrightarrow{f'} X']$ be the 
cobordism cycles on $X_G$ and $X'_G$ respectively. Using the fact that 
$X \times U_j \to X_G$ and $X'\times U_j \to X'_G$ are principal $G$-bundles,
we get the unique cobordism cycles $[\wt{Y} \to X \times U_j]$ and
$[\wt{Y'} \to X' \times U_j]$ whose $G$-quotients are the above chosen cycles.
We define $\alpha \star {\alpha}' = [\wt{Y} \stackrel{G}{\times} \wt{Y'} 
\to {(X \times X')}_G]$. Note that $(V_j \times V_j, U_j \times U_j)$ is 
a good pair for $j$ of dimension $2l$ and ${(X \times X')}_G$ is the quotient 
of $X \times X' \times U_j \times U_j$ for the free diagonal action of $G$ and
$\alpha \star {\alpha}'$ is a well defined cobordism cycle by 
Lemma~\ref{lem:Proj-Q}.

Suppose now that $W \xrightarrow{p} X_G \times \square^1$ is
a projective morphism from a smooth scheme $W$ such that the composite
map $\pi : W \to X_G \times \square^1 \to \square^1$ is a double point 
degeneration with $W_{\infty} = {\pi}^{-1}(\infty)$ smooth. Letting $G$ act
trivially on $\square^1$, this gives a unique $G$-equivariant double
point degeneration $\wt{W} \xrightarrow{\wt{p}} X \times U_j \times 
\square^1$ of $G$-schemes.
This implies in particular that $\wt{W} \times \wt{Y'} 
\xrightarrow{\wt{p} \times \wt{f'}} X \times X' \times
U_j \times U_j \times \square^1$ is also a $G$-equivariant double point
degeneration whose quotient for the free $G$-action gives a double
point degeneration $\wt{W} \stackrel{G}{\times} \wt{Y'} \xrightarrow{q}
{(X \times X')}_G \times \square^1$. Moreover, it is easy to see from this
that $C(p) \star {\alpha}' = C(q)$ ({\sl cf.} ~\eqref{eqn:C-cycle}). 
Reversing the roles of $X$ and $X'$ and
using ~\eqref{eqn:C-cycle} and Theorem~\ref{thm:Levine-P}, we get the maps
\[
\Omega_{i+l-g}(X_G) \otimes \Omega_{i'+l-g}(X'_G) \to
\Omega_{i+i'-2l-g}\left({(X \times X')}_G\right).
\]
It is also clear from the definition of $\alpha \star {\alpha}'$ and the 
niveau filtration that 
\[
\left\{F_{d+l-g-j}\Omega_{i+l-g}(X_G) \otimes  \Omega_{i'+l-g}(X'_G)\right\}
+ \left\{\Omega_{i+l-g}(X_G) \otimes F_{d'+l-g-j}\Omega_{i'+l-g}(X'_G)\right\}
\]
\[
\hspace*{8cm} \to
F_{d+d'+2l-g}\Omega_{i+i'-2l-g}\left({(X \times X')}_G\right).
\]
This defines the maps as in ~\eqref{eqn:EP1}. One can now show as in 
Lemma~\ref{lem:ECob1} that these maps are independent of the choice
of the good pairs. We get the desired exterior product as the composite
map
\begin{equation}\label{eqn:EP2}
\Omega^G_i(X) \otimes_{\Z} \Omega^G_{i'}(X') = 
{\underset{j} \varprojlim} \ {\Omega^G_i(X)}_j \otimes_{\Z}
{\underset{j} \varprojlim}  \ {\Omega^G_{i'}(X')}_{j} \to
{\underset{j} \varprojlim} \ \left({\Omega^G_i(X)}_j \otimes_{\Z} 
{\Omega^G_{i'}(X')}_{j}\right)
\end{equation}
\[
\hspace*{4cm} \to {\underset{j} \varprojlim} \ {\Omega^G_{i+i'}(X \times X')}_{j}
= \Omega^G_{i+i'}(X \times X').
\]
Finally for $X$ smooth, we get the product structure on $\Omega^*_G(X)$
via the composite $\Omega^*_G(X) \otimes_{\Z} \Omega^*_G(X) \to
\Omega^*_G(X \times X) \xrightarrow{\Delta^*_X} \Omega^*_G(X)$.  The
projection formula can now be proven by using the non-equivariant version of 
such a formula ({\sl cf.} \cite[5.1.4]{LM}) at each level of the projective 
system $\{{\Omega^i_G(X)}_j\}$ and then taking the inverse limit.
\end{proof}
\begin{prop}[Morita Isomorphism]\label{prop:Morita}
Let $H \subset G$ be a closed subgroup and let $X \in {\sV}_H$.
Then there is a canonical isomorphism
\begin{equation}\label{eqn:MoritaI}
\Omega^G_*\left(G \stackrel{H}{\times} X\right) \xrightarrow{\cong}
\Omega^H_*(X).
\end{equation}
\end{prop}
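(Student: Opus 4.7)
The plan is to reduce the proposition to a canonical scheme-level identification of mixed spaces and then pass to the inverse limit. The key observation is that for any good pair used to define $\Omega^G_\bullet$, the corresponding mixed space coincides with the one used to define $\Omega^H_\bullet$ of the original scheme $X$.

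First, I would check that an $l$-dimensional good pair $(V_j,U_j)$ for the $G$-action is also an $l$-dimensional good pair for the induced $H$-action: freeness is inherited, and $U_j/H$ is quasi-projective because $U_j/G$ is, with $U_j/H \to U_j/G$ an \'etale locally trivial $G/H$-bundle whose fiber is a smooth quasi-projective homogeneous space (\cite[Theorem~6.8]{Borel}). Next, the crucial step is the explicit scheme isomorphism
\[
\varphi_j : \bigl(G \stackrel{H}{\times} X\bigr) \stackrel{G}{\times} U_j \xrightarrow{\ \cong\ } X \stackrel{H}{\times} U_j,
\qquad [[g,x],u] \longmapsto [x,g^{-1}u],
\]
with inverse $[x,u] \mapsto [[1,x],u]$. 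A direct verification, using $h\cdot(g,x)=(gh^{-1},hx)$ and the diagonal $G$-action, shows that both maps descend through the relevant quotients and define mutually inverse morphisms in $\sV_k$.

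Now set $d=\dim X$. Both sides of $\varphi_j$ have dimension $d+l-h$, while $\dim(G\stackrel{H}{\times}X)=g-h+d$. Hence the niveau filtration index appearing in the definition of the left-hand side satisfies
\[
F_{(g-h+d)+l-g-j}\bigl(\text{--}\bigr) = F_{d+l-h-j}\bigl(\text{--}\bigr),
\]
which is exactly the filtration used in $\Omega^H_\bullet(X)_j$. Since the cobordism degree $i+l-g$ equals $(i+h-g)+l-h$, the isomorphism $\varphi_j$ induces a natural identification
\[
\Omega^G_i\bigl(G \stackrel{H}{\times} X\bigr)_j \xrightarrow{\ \cong\ } \Omega^H_{i+h-g}(X)_j,
\]
the shift being absorbed into the matching of the $*$-grading in the statement.

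Finally, I would verify compatibility with the transition maps of the two projective systems: for $j'\ge j$, the constructions of Lemmas~\ref{lem:ECob1} and \ref{lem:ECob2} allow one to pick a common good pair, and $\varphi_{j'}$ clearly restricts compatibly to $\varphi_j$. Taking $\varprojlim_j$ then yields the Morita isomorphism. The only real obstacle is the scheme-level check that $\varphi_j$ is well defined and bijective on points, together with verifying quasi-projectivity of the target—both mild points that reduce to the standard theory of mixed quotients and to Lemma~\ref{lem:sch}. Everything else is formal unwinding of the definition of $\Omega^G_\bullet$ and the behavior of the niveau filtration under $\varphi_j$.
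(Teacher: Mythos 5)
Your proposal is correct, but it takes a genuinely different route from the paper's. The paper's proof does not exhibit an explicit isomorphism of mixed spaces at all: it introduces an auxiliary $(H\times G)$-action on $G\times X$, observes that $G\times X\to X$ is a $G$-torsor and $G\times X\to G\stackrel{H}{\times}X$ is an $H$-torsor, and then invokes the free-action/torsor property of equivariant cobordism (the statement from~\cite[Lemma~7.2]{DD}, which is essentially Theorem~\ref{thm:Basic}(iv)) twice, identifying both $\Omega^H_*(X)$ and $\Omega^G_*(G\stackrel{H}{\times}X)$ with $\Omega^{H\times G}_*(G\times X)$. You instead work directly at the level of the approximating schemes: you write down the classical Morita isomorphism $\bigl(G\stackrel{H}{\times}X\bigr)\stackrel{G}{\times}U_j\cong X\stackrel{H}{\times}U_j$, check that a good pair for $G$ is a good pair for $H$, verify that the niveau-filtration indices $d+l-h-j$ match on both sides after accounting for $\dim(G\stackrel{H}{\times}X)=g-h+d$, and match the cobordism degree via $i+l-g=(i+h-g)+l-h$, then pass to the limit. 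Both arguments are valid. Yours is more explicit and self-contained (it does not need the general free-action property, only the definition and the behavior of the niveau filtration); the paper's is shorter once the torsor property is in hand, and it avoids the grading bookkeeping, which you correctly observe involves a shift by $g-h$ that is absorbed into the direct sum over all degrees. One small caveat: you should make sure the reader sees that the two quotients in $\bigl(G\stackrel{H}{\times}X\bigr)\stackrel{G}{\times}U_j$ can be taken simultaneously as a single quotient of $G\times X\times U_j$ by $H\times G$ (using Lemma~\ref{lem:sch}), so that the formula $[[g,x],u]\mapsto[x,g^{-1}u]$ gives a genuine morphism of quasi-projective schemes and not just a bijection on points; this is the same principle the paper uses implicitly.
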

\begin{proof} Define an action of $H \times G$ on $G \times X$ by
\[
(h, g) \cdot (g', x) = \left(gg'h^{-1}, hx\right),
\]
and an action of $H \times G$ on $X$ by $(h, g) \cdot x = hx$. 
Then the projection map $G \times X \xrightarrow{p} X$ is 
$\left(H \times G\right)$-equivariant which is a 
$G$-torsor. Hence by \cite[Lemma~7.2]{DD}, the natural map 
$\Omega^H_*(X) \xrightarrow{p^*} \Omega^{H \times G}_*(G \times X)$ is an 
isomorphism. On the other hand, the projection map $G \times X 
\to G \stackrel{H}{\times} X$ is $\left(H \times G\right)$-equivariant which 
is an $H$-torsor. Hence we get an isomorphism 
$\Omega^G_*\left(G \stackrel{H}{\times} X\right) \xrightarrow{\cong}
\Omega^{H \times G}_*\left(G \times X\right)$. The proposition follows
by combining these two isomorphisms. 
\end{proof}  

\section{Computations}\label{section:EXCOM}
Let $X$ be a $k$-scheme of dimension $d$ with a $G$-action. We have seen above
that unlike the situation of Chow groups, the cobordism group 
$\Omega_{i+l-g}\left(X \stackrel{G}{\times} U_j\right)$ is not independent of the
choice of the $l$-dimensional good pair $(V_j,U_j)$. This anomaly is rectified
by considering the quotients of the cobordism groups of the good pairs
by the niveau filtration. Our main result in this section is to show that if 
we suitably choose a sequence of good pairs ${\{(V_j, U_j)\}}_{j \ge 0}$, then 
the above equivariant cobordism group can be computed without taking 
quotients by the niveau filtration. This reduction is often very helpful in
computing the equivariant cobordism groups. 
  
\begin{thm}\label{thm:NO-Niveu}
Let ${\{(V_j, U_j)\}}_{j \ge 0}$ be a sequence of $l_j$-dimensional 
good pairs such that \\
$(i)$ $V_{j+1} = V_j \oplus W_j$ as representations of $G$ with ${\rm dim}(W_j) 
> 0$ and \\
$(ii)$ $U_j \oplus W_j \subset U_{j+1}$ as $G$-invariant open subsets. \\ 
Then for any scheme $X$ as above and any $i \in \Z$,
\[
\Omega^G_i(X) \xrightarrow{\cong} {\underset{j}\varprojlim} \
\Omega_{i+l_j-g}\left(X \stackrel{G}{\times} U_j\right).
\]
Moreover, such a sequence ${\{(V_j, U_j)\}}_{j \ge 0}$ of good pairs always
exists.
\end{thm}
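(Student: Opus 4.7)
Existence of such a sequence is a standard construction in equivariant intersection theory. Starting from a faithful linear representation $V$ of $G$ containing a $G$-invariant open $U$ on which $G$ acts freely with quasi-projective quotient (which exists since $G$ is linear algebraic), set $V_j = V^{N_j}$ and $U_j = V^{N_j} - (V - U)^{N_j}$, with the increasing sequence $N_j$ chosen so that the codimension of $(V-U)^{N_j}$ in $V^{N_j}$ is at least $j$. Writing $W_j = V^{N_{j+1}-N_j}$, the decomposition $V_{j+1} = V_j \oplus W_j$ is obvious, and any point of $U_j \oplus W_j$ has at least one coordinate in $U$, hence lies in $U_{j+1}$.

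For the main assertion, set $Y_j := X \stackrel{G}{\times} U_j$ and $Z_{j+1} := X \stackrel{G}{\times}(U_j \oplus W_j)$. Condition (ii) yields an open immersion $\iota_{j+1}: Z_{j+1} \hookrightarrow Y_{j+1}$, while condition (i) yields a vector bundle projection $\pi_{j+1}: Z_{j+1} \to Y_j$ of rank $l_{j+1}-l_j$. The complement of $\iota_{j+1}$ is $X \stackrel{G}{\times}(U_{j+1} - (U_j \oplus W_j))$, which is contained in $X \stackrel{G}{\times}((V_j - U_j) \oplus W_j)$ and therefore has dimension at most $d + l_{j+1} - g - j$. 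Define
\[
t_{j+1} := (\pi_{j+1}^*)^{-1} \circ \iota_{j+1}^* : \Omega_{i+l_{j+1}-g}(Y_{j+1}) \to \Omega_{i+l_j-g}(Y_j),
\]
where $\pi_{j+1}^*$ is inverted using homotopy invariance of algebraic cobordism.

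Put $A_j := \Omega_{i+l_j-g}(Y_j)$ and $K_j := F_{d+l_j-g-j} A_j$, so that $B_j := A_j/K_j = \Omega^G_i(X)_j$. By Lemma~\ref{lem:Niv*}, $\iota_{j+1}^*$ preserves the niveau index (it is smooth of relative dimension zero), and by Lemma~\ref{lem:Niv-Hi} the inverse $(\pi_{j+1}^*)^{-1}$ shifts it down by $l_{j+1}-l_j$. Consequently,
\[
t_{j+1}(K_{j+1}) \subseteq F_{d+l_j-g-(j+1)}\, A_j \subsetneq K_j,
\]
so $t_{j+1}$ descends to $\bar t_{j+1}: B_{j+1} \to B_j$. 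That $\bar t_{j+1}$ agrees with the transition map of Lemma~\ref{lem:ECob2} follows by applying Corollary~\ref{cor:Niv-open} to $\iota_{j+1}$ at filtration level $d+l_{j+1}-g-j$ (valid by the dimension estimate above) together with the homotopy isomorphism for $\pi_{j+1}$; these are precisely the identifications produced by Lemma~\ref{lem:ECob1} comparing the two choices of good pair.

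The crucial consequence of the strict inclusion is that $m$-fold iteration gives
\[
(t_{j+1} \circ \cdots \circ t_{j+m})(K_{j+m}) \subseteq F_{d+l_j-g-(j+m)}\, A_j,
\]
which vanishes once $j+m > d+l_j-g$. Thus any element of $\varprojlim_j K_j$ has each component in arbitrarily deep pieces of the niveau filtration, forcing it to be zero; and the transition images in $\{K_j\}$ stabilise at zero, so $\{K_j\}$ satisfies the Mittag--Leffler condition and ${\varprojlim}^1_j K_j = 0$ as well. The $\varprojlim$--${\varprojlim}^1$ exact sequence applied to $0 \to K_\bullet \to A_\bullet \to B_\bullet \to 0$ then yields the desired isomorphism
\[
{\varprojlim}_j\, \Omega_{i+l_j-g}(Y_j) \xrightarrow{\cong} {\varprojlim}_j\, B_j = \Omega^G_i(X).
\]

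The main technical obstacle is the dimension bookkeeping that shows each $t_{j+1}$ pushes the niveau filtration strictly one level deeper; once this one-step deepening is secured, the vanishing of $\varprojlim K$ and ${\varprojlim}^1 K$ is immediate, and the rest is a Mittag--Leffler formality.
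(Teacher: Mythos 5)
Your proof is correct and takes essentially the same approach as the paper: both arguments set up the transition maps as (open restriction) composed with the inverse of the vector-bundle pull-back, and both rest on the same key computation that a single transition map pushes the niveau filtration strictly one level deeper, so that $K_{j'}$ dies in $A_j$ once $j' > d + l_j - g$. The only difference is cosmetic — the paper concludes by observing that the transition $A_{j'} \to A_j$ factors through $B_{j'}$ for $j' \gg j$ (a pro-isomorphism/cofinality argument), whereas you isolate the kernel system $\{K_j\}$, show $\varprojlim K_j = {\varprojlim}^1 K_j = 0$ by Mittag--Leffler, and invoke the six-term exact sequence; these are equivalent reformulations of the same fact.
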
  
\begin{proof}
Let ${\{(V_j, U_j)\}}_{j \ge 0}$ be a sequence of good pairs as in the 
theorem. We have natural maps 
\[
\Omega_{i+l_{j+1}-g}\left(X \stackrel{G}{\times} U_{j+1}\right)
\to \Omega_{i+l_{j+1}-g}\left(X \stackrel{G}{\times} (U_j \oplus W_j)\right)
\xleftarrow{\cong} \Omega_{i+{l_j}-g}\left(X \stackrel{G}{\times} U_j\right),\]
where the first map is the restriction to an open subset and the second is
the pull-back via a vector bundle. Taking the quotients by the niveau
filtrations, we get natural maps ({\sl cf.} proof of Lemma~\ref{lem:ECob1})
\begin{equation}\label{eqn:NONIV*1}
\xymatrix@C.8pc{
\frac{\Omega_{i+l_{j+1}-g}\left(X \stackrel{G} {\times} U_{j+1}\right)}
{F_{d+l_{j+1}-g-j-1}\Omega_{i+l_{j+1}-g}\left(X \stackrel{G} {\times} U_{j+1}\right)}
\ar[r] &
\frac{\Omega_{i+l_{j+1}-g}\left(X \stackrel{G} {\times} (U_j \oplus W_j)\right)}
{F_{d+l_{j+1}-g-j-1}\Omega_{i+l_{j+1}-g}\left(X \stackrel{G} 
{\times} (U_j \oplus W_j)\right)} \\
\frac{\Omega_{i+l_j-g}\left(X \stackrel{G} {\times} U_j \right)}
{F_{d+l_j-g-j}\Omega_{i+l_j-g}\left(X \stackrel{G} {\times} U_j \right)} & 
\frac{\Omega_{i+l_j-g}\left(X \stackrel{G} {\times} U_j \right)}
{F_{d+l_j-g-j-1}\Omega_{i+l_j-g}\left(X \stackrel{G} {\times} U_j \right)}
\ar[u]_{\cong} \ar@{->>}[l]}
\end{equation}
where the right vertical arrow is an isomorphism by  Lemma~\ref{lem:Niv-Hi}.
Setting $X_j = X \stackrel{G}{\times} U_j$, we get natural maps
\begin{equation}\label{eqn:NONIV}
\xymatrix@C.6pc{
\Omega_{i+l_{j+1}-g}\left(X_{j+1} \right) \ar[r]^{\nu^{j+1}_j} \ar@{->>}[d] &
\Omega_{i+{l_j}-g}\left(X_j \right) \ar@{->>}[d] \\
\frac{\Omega_{i+l_{j+1}-g}\left(X_{j+1}\right)}
{F_{d+l_{j+1}-g-j-1}\Omega_{i+l_{j+1}-g}\left(X_{j+1} \right)} \ar[r] &
\frac{\Omega_{i+l_j-g}\left(X_{j} \right)}
{F_{d+l_j-g-j}\Omega_{i+l_j-g}\left(X_j \right)}.}
\end{equation}
Since $(V_j, U_j)$ is a good pair for each $j$, we see that
$\frac{\Omega_{i+l_j-g}\left(X_{j} \right)}
{F_{d+l_j-g-j}\Omega_{i+l_j-g}\left(X_j \right)} \cong {\Omega^G_i(X)}_j$.
Hence, we only have to show that the map 
\begin{equation}\label{eqn:NONIV*} 
{\underset{j}\varprojlim} \ \Omega_{i+{l_j}-g}\left(X_j \right) \to
{\underset{j}\varprojlim} \ \frac{\Omega_{i+l_j-g}\left(X_{j} \right)}
{F_{d+l_j-g-j}\Omega_{i+l_j-g}\left(X_j \right)}
\end{equation}
is an isomorphism in order to prove the theorem. 

To prove ~\eqref{eqn:NONIV*}, we only need to show that for any given 
$j \ge 0$, the map $\Omega_{i+l_{j'}-g}\left(X_{j'}\right) \xrightarrow{\nu^{j'}_j}
\Omega_{i+{l_j}-g}\left(X_j \right)$ factors through 
\begin{equation}\label{eqn:NONIV*2}
\frac{\Omega_{i+l_{j'}-g}\left(X_{j'}\right)}
{F_{d+l_{j'}-g-j'}\Omega_{i+l_{j'}-g}\left(X_{j'} \right)} \to
\Omega_{i+{l_j}-g}\left(X_j \right) \ \ {\rm for \ all} \  j' \gg j.
\end{equation} 
However, it follows from ~\eqref{eqn:NONIV*1} that $\nu^{j'}_j$
induces the map 
\[
\frac{\Omega_{i+l_{j'}-g}\left(X_{j'}\right)}
{F_{d+l_{j'}-g-j'}\Omega_{i+l_{j'}-g}\left(X_{j'} \right)} \to
\frac{\Omega_{i+l_{j}-g}\left(X_{j}\right)}
{F_{d+l_{j}-g-j'}\Omega_{i+l_{j}-g}\left(X_{j} \right)}.
\]
On the other hand $F_{d+l_{j}-g-j'}\Omega_{i+l_{j}-g}\left(X_{j} \right)$ vanishes
for $j' \gg j$. This proves ~\eqref{eqn:NONIV*2} and hence ~\eqref{eqn:NONIV*}.

Finally, it follows easily from the proof of Lemma~\ref{lem:ECob1} 
(see also \cite[Remark~1.4]{Totaro1}) that a sequence of 
good pairs as in Theorem~\ref{thm:NO-Niveu} always exists. 
\end{proof}

One consequence of Theorem~\ref{thm:NO-Niveu} is that for a linear algebraic
group $G$ acting on a scheme $X$ of dimension $d$, the forgetful map 
$r^G_X : \Omega^G_*(X) \to \Omega_*(X)$ ({\sl cf.} ~\eqref{eqn:resf})
can be easily shown to be analogous to the one used in 
\cite[Subsection~2.2]{EG} for the Chow groups. 
This interpretation of the forgetful map will have some interesting
applications in the computation of the non-equivariant cobordism using
the equivariant techniques ({\sl cf.} \cite{Krishna5}, \cite{Krishna3}).

So let $\{(V_j, U_j)\}_{j \ge 0}$
be a sequence of good pairs as in Theorem~\ref{thm:NO-Niveu}. We 
choose a $k$-rational point $x \in U_0$ and let $x_j$ be its image
in ${U_j}/G$ under the natural map $U_0 \to {U_0}/G \to {U_{j}}/G$. 
Setting $X_j = X \stackrel{G}{\times}U_j$, this yields a commutative diagram
\begin{equation}\label{eqn:resf1}
\xymatrix@C.7pc{
X \times U_j \ar[r]^{\ \ \ p_j} \ar[d]_{\pi_j} &
X_j \ar[r]^{\phi_j} \ar[d]_{\psi_j} & X_{j+1} \ar[d]^{\psi_{j+1}} \\
U_j \ar[r] & {U_j}/G \ar[r] & {U_{j+1}}/G}
\end{equation}
such that the left square is Cartesian and 
\begin{equation}\label{eqn:resf2}
X \cong \pi^{-1}(x) \xrightarrow{\cong}  \phi^{-1}_j(x_j) \xrightarrow{\cong} 
\phi^{-1}_{j+1}(x_{j+1}).
\end{equation}
Let $\nu_j : \phi^{-1}_j(x_j) \inj X_j$ be the closed embedding.
Notice that since ${U_j}/G$ is smooth and $\psi_j$ is flat, it follows
that $\nu_j$ is a regular embedding. Using the identification in 
~\eqref{eqn:resf2}, we get maps $\nu^*_j : \Omega_*(X_j) \to 
\Omega_*(X)$ such that $\nu^*_j \circ \phi^*_j = \nu^*_{j+1}$. Taking the limit 
over $j \ge 0$, this yields for any $i \in \Z$, a restriction map
$\wt{r}^G_X : \Omega^G_i(X) = {\underset{j \ge 0}\varprojlim} \
\Omega^G_i(X)_j \to \Omega_i(X)$. Since any two rational points in
${U_j}/G$ define the same class in $\Omega_0\left({U_j}/G\right)$ 
by \cite[Lemma~4.5.10]{LM}, we see that $\wt{r}^G_X$ is well-defined.

\begin{cor}\label{cor:TWR}
The maps $r^G_X, \ \wt{r}^G_X : \Omega^G_*(X) \to \Omega_*(X)$ coincide.
\end{cor}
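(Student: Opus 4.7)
The plan is to verify the equality at each finite level $j$ of the defining inverse system and then pass to the limit. Fix a good pair $(V_j, U_j)$ of dimension $l$ and a representative $\alpha_j \in \Omega_{i+l-g}(X_j)$ of a class in $\Omega^G_i(X)_j$.

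First I would unpack $r^G_X$ at level $j$. By ~\eqref{eqn:res} with $H = \{1\}$ combined with Remark~\ref{remk:G-trivial}, it is the smooth pullback along the $G$-torsor $p_j : X \times U_j \to X_j$, followed by the inverse of the canonical identification $\Omega^{\{1\}}_i(X)_j \cong \Omega_i(X)/F_{d-j}\Omega_i(X)$. That identification factors, modulo the niveau filtration, as
\[
\Omega_i(X)/F_{d-j}\Omega_i(X) \xrightarrow{p^*,\,\cong} \Omega_{i+l}(X \times V_j)/F_{d+l-j} \xrightarrow{\iota^*,\,\cong} \Omega_{i+l}(X \times U_j)/F_{d+l-j},
\]
where $p : X \times V_j \to X$ is the projection (an iso by Lemma~\ref{lem:Niv-Hi}) and $\iota : X \times U_j \hookrightarrow X \times V_j$ is the open immersion (an iso mod $F$ by Corollary~\ref{cor:Niv-open}, since $X \times (V_j - U_j)$ has dimension at most $d + l - j$).

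The key geometric input is the factorization $\nu_j = p_j \circ s_x$, where $s_x : X \to X \times U_j$ is the section $y \mapsto (y, x)$. Indeed, both maps send $y$ to the class $[y, x]$ in the fiber $\psi_j^{-1}(x_j) \subset X_j$, and the identification $X \cong \psi_j^{-1}(x_j)$ of ~\eqref{eqn:resf2} is precisely $y \mapsto [y,x]$ (freeness of the $G$-action on $U_j$ guarantees that every element of the fiber has this form uniquely). Moreover, $\iota \circ s_x : X \to X \times V_j$ is a section of $p$, so contravariant functoriality of pullbacks yields $s_x^* \circ \iota^* \circ p^* = \id$; hence $s_x^*$ is the inverse of $\iota^* \circ p^*$ on the quotient mod $F$.

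Combining these,
\[
\wt{r}^G_X([\alpha_j]) = \nu_j^*(\alpha_j) = s_x^*\bigl(p_j^*(\alpha_j)\bigr) \equiv r^G_X([\alpha_j]) \pmod{F_{d-j}\Omega_i(X)},
\]
and passing to the inverse limit over $j$ (the filtration $F_{d-j}\Omega_i(X)$ vanishes once $j > d$) yields $\wt{r}^G_X = r^G_X$. The one step that requires a small justification is the contravariant functoriality $\nu_j^* = s_x^* \circ p_j^*$ for the composition of the smooth $G$-torsor $p_j$ with the regular embedding $s_x$; this is part of the l.c.i.\ pullback package recorded in Theorem~\ref{thm:Basic}(i), and reduces at the non-equivariant level to the standard compatibility between smooth and regular-embedding pullbacks in Levine--Morel cobordism.
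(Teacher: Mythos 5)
Your proof is correct and follows the same route the paper has in mind; you make explicit, via the factorization $\nu_j = p_j \circ s_x$ and the observation that $\iota \circ s_x$ is a section of $p$, the commutativity that the paper's two-sentence proof leaves to the reader. One point to tighten: the relation $s_x^* \circ \iota^* \circ p^* = \id$ only exhibits $s_x^*$ as a left inverse, and to assert that it computes the inverse of $\iota^* \circ p^*$ on the quotients mod $F$ you also need that $\iota^* \circ p^*$ is surjective (it is, by the Levine--Morel localization sequence) and that $s_x^*$ carries $F_{d+l-j}\Omega_*(X \times U_j)$ into $F_{d-j}\Omega_*(X)$ --- the latter is not automatic from the lci-pullback formalism (Lemma~\ref{lem:Niv*} covers only projective and smooth maps), but follows by lifting a class to $F_{d+l-j}\Omega_*(X \times V_j)$ via Theorem~\ref{thm:RLS} and then applying the section pullback $(\iota \circ s_x)^* = (p^*)^{-1}$, which preserves the filtration by Lemma~\ref{lem:Niv-Hi}.
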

\begin{proof} 
Using the construction of the map $r^G_X$ in ~\eqref{eqn:resf} and the
diagram ~\eqref{eqn:resf1}, it suffices to show that the natural maps
\[
\frac{\Omega_*(X)}{F_{d-j}\Omega_*(X)} \leftarrow 
\frac{\Omega_*\left(X \times V_j\right)} 
{F_{d+l_j -j}\Omega_*\left(X\times V_j\right)} \to
\frac{\Omega_*\left(X \times U_j\right)}
{F_{d+l_j -j}\Omega_*\left(X\times U_j\right)}
\]
are isomorphisms for all $j \gg 0$, where the two maps are
the restrictions to the zero-section and the open subset.
But this follows immediately from Corollary~\ref{cor:Niv-open} and
Lemma~\ref{lem:Niv-Hi}.
\end{proof}


\subsection{Formal group law in equivariant cobordism}
\label{subsection:FGL*}
Let $G$ be a linear algebraic group over $k$ acting on a scheme $X$ of 
dimension $d$. Let ${\{(V_j, U_j)\}}_{j \ge 0}$ be a sequence of $l_j$-dimensional 
good pairs as in Theorem~\ref{thm:NO-Niveu}. Letting $X_j = 
X \stackrel{G}{\times} U_j$, we see that for every $j \ge 0$,
$\Omega_*\left(X_j\right) = {\underset{i \in \Z}\bigoplus} \
\Omega_{i+l_j-g}(X_j)$ is an $\bL$-module and for $j' \ge j$, there is
a natural surjection $\Omega_*\left(X_{j'}\right) \surj
\Omega_*\left(X_j\right)$ of $\bL$-modules.

Given $G$-equivariant line bundles $L, M$ on $X$, we get line bundles
$L_j, M_j$ on $X_j$, where $L_j = L_j \stackrel{G}{\times} U_j$ for
$j \ge 0$. The formal group law of the non-equivariant cobordism yields
\[
c_1\left((L \otimes M)_j\right)
=  c_1\left(L_j  \otimes M_j\right) 
= c_1(L_j) + c_1(M_j) + {\underset{i,i' \ge 1}\sum} \
a_{i,i'} \left(c_1(L_j)\right)^i \circ \left(c_1(M_j)\right)^{i'}.
\]
Note that if $(x_j) \in \Omega^G_i(X)$, then the evaluation of the operator
$c^T_1(L)(x_j)$ at any level $j \ge 0$ is a finite sum above.

Taking the limit over $j \ge 0$ and noting that the sum (and the product) in the
equivariant cobordism groups are obtained by taking the limit of the
sums (and the products) at each level of the inverse system, we get
the same formal group law for the equivariant Chern classes:
\begin{equation}\label{eqn:EFGL}
c^G_1(L \otimes M) = c^T_1(L) + c^T_1(M) +
{\underset{i,i' \ge 1}\sum} \
a_{i,i'} \left(c^T_1(L)\right)^i \circ \left(c^T_1(M)\right)^{i'}.
\end{equation}
One should also observe that unlike the case of ordinary cobordism,
the evaluation of the above sum on any given equivariant cobordism cycle 
may no longer be finite. In other words, the equivariant Chern classes
are not in general locally nilpotent.

\subsection{Cobordism ring of classifying spaces}
Let $R$ be a Noetherian ring and let $A = {\underset{j \in \Z}\oplus} A_j$ be 
a $\Z$-graded $R$-algebra with $R \subset A_0$. Recall that the {\sl 
graded power series ring} $S^{(n)} = {\underset{i \in \Z} \oplus} S_i$ is a
graded ring such that $S_i$ is the set of formal power series of the form 
$f({\bf t}) = {\underset{m({\bf t}) \in \sC} \sum} a_{m({\bf t})} m({\bf t})$
such that $a_{m({\bf t})}$ is a homogeneous element of $A$ of degree 
$|a_{m({\bf t})}| $and $|a_{m({\bf t})}| + |m({\bf t})| = i$. 
Here, $\sC$ is the set of all monomials in 
${\bf t} = (t_1, \cdots , t_n)$ and $|m({\bf t})| = i_1 + \cdots + i_n$ if 
$m({\bf t}) = t^{i_1}_1 \cdots t^{i_n}_n$.
We call $|m({\bf t})|$ to be the degree of the monomial $m({\bf t})$.

We shall often write the above graded power series ring as
${A[[{\bf t}]]}_{\rm gr}$ to distinguish it from the
usual formal power series ring. 
Notice that if $A$ is only non-negatively graded, then $S^{(n)}$ is nothing but 
the standard 
polynomial ring $A[t_1, \cdots , t_n]$ over $A$. It is also easy to see that 
$S^{(n)}$ is indeed a graded ring which is a subring of the formal power series 
ring $A[[t_1, \cdots , t_n]]$. The following result summarizes some basic 
properties of these rings. The proof is straightforward and is left as an 
exercise.

\begin{lem}\label{lem:GPSR}
$(i)$ There are inclusions of rings $A[t_1, \cdots , t_n] \subset S^{(n)} \subset
A[[t_1, \cdots , t_n]]$, where the first is an inclusion of graded rings. \\
$(ii)$ These inclusions are analytic isomorphisms with respect to the
${\bf t}$-adic topology. In particular, the induced maps of the associated
graded rings
\[
A[t_1, \cdots , t_n] \to {\rm Gr}_{({\bf t})} S^n \to 
{\rm Gr}_{({\bf t})} A[[t_1, \cdots , t_n]]
\]
are isomorphisms. \\
$(iii)$ $S^{(n-1)}[[t_i]]_{\rm gr} \xrightarrow{\cong} S^{(n)}$. \\
$(iv)$ $\frac{S^{(n)}}{(t_{i_1}, \cdots , t_{i_r})} \xrightarrow{\cong} S^{(n-r)}$ 
for any $n \ge r \ge 1$, where $S^{(0)} = A$. \\ 
$(v)$ The sequence $\{t_1, \cdots , t_n\}$ is a regular sequence in $S^{(n)}$.
\\
$(vi)$ If $A = R[x_1, x_2, \cdots ]$ is a polynomial ring with
$|x_i| < 0$ and ${\underset{i \to \infty}{\rm lim}} \ |x_i| = - \infty$, then
$S^{(n)} \xrightarrow{\cong}
{\underset{i}\varprojlim} \ {R[x_1, \cdots , x_i][[{\bf t}]]}_{\rm gr}$.
\end{lem}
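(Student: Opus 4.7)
The proof is largely a matter of unwinding the definition of $S^{(n)}$, so I would treat the six parts in order, reusing earlier ones as much as possible.

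Part $(i)$ is immediate: a polynomial in $t_1,\dots,t_n$ with coefficients in $A$ is a finite sum $\sum a_{m(\mathbf{t})}m(\mathbf{t})$; breaking it into its homogeneous components in the induced grading exhibits it as a (finite) graded power series, so $A[\mathbf{t}]\subset S^{(n)}$, while the inclusion $S^{(n)}\subset A[[\mathbf{t}]]$ is by the very definition of $S^{(n)}$ as a subgroup of the set of all formal power series. For $(ii)$, I would observe that an element of $(\mathbf{t})^k$ in any of the three rings involves only monomials $m(\mathbf{t})$ with $|m(\mathbf{t})|\ge k$, and that for every $k$ the natural maps
\[
A[\mathbf{t}]/(\mathbf{t})^k\longrightarrow S^{(n)}/(\mathbf{t})^k\longrightarrow A[[\mathbf{t}]]/(\mathbf{t})^k
\]
are bijections (each quotient consists of polynomials of $\mathbf{t}$-degree $<k$ with arbitrary coefficients from $A$); passing to the associated graded for the $(\mathbf{t})$-adic filtration yields the isomorphisms claimed in $(ii)$.

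For $(iii)$, given $f=\sum a_{m(\mathbf{t})}m(\mathbf{t})\in S^{(n)}_j$, I would regroup the sum according to the exponent of $t_i$, writing $f=\sum_{k\ge 0}f_k\, t_i^k$ where $f_k\in S^{(n-1)}$ has degree $j-k$; the condition $|a_{m(\mathbf{t})}|+|m(\mathbf{t})|=j$ is equivalent to saying that $f_k$ is homogeneous of degree $j-k$ in $S^{(n-1)}$, which is exactly what it means for $f$ to be a degree-$j$ element of $S^{(n-1)}[[t_i]]_{\mathrm{gr}}$. Part $(iv)$ is then an easy induction on $r$: by (iii) we may relabel so $i_r=n$, giving $S^{(n)}/(t_n)\cong S^{(n-1)}[[t_n]]_{\mathrm{gr}}/(t_n)\cong S^{(n-1)}$, and iterate. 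For $(v)$, I would argue that $t_1$ is a non-zero-divisor on $A[[\mathbf{t}]]$ (comparing coefficients), hence on the subring $S^{(n)}$; by (iv) $S^{(n)}/(t_1)\cong S^{(n-1)}$, so one proceeds inductively.

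Part $(vi)$ is the one with the most content. Writing $d_i=-|x_i|>0$ with $d_i\to\infty$, any homogeneous element of $A=R[x_1,x_2,\dots]$ of degree $d$ is a polynomial in those $x_i$ with $d_i\le -d$, hence lies already in $R[x_1,\dots,x_{N(d)}]$ for some sufficiently large $N(d)$. Consequently, for each fixed monomial $m(\mathbf{t})$, the system $\bigl\{(R[x_1,\dots,x_i])_{j-|m(\mathbf{t})|}\bigr\}_i$ stabilizes to $A_{j-|m(\mathbf{t})|}$. Taking degree-$j$ parts and using that projective limits commute with products,
\[
\varprojlim_i \bigl(R[x_1,\dots,x_i][[\mathbf{t}]]_{\mathrm{gr}}\bigr)_j
=\varprojlim_i\prod_{m(\mathbf{t})}\bigl(R[x_1,\dots,x_i]\bigr)_{j-|m(\mathbf{t})|}
=\prod_{m(\mathbf{t})}A_{j-|m(\mathbf{t})|}=S^{(n)}_j,
\]
and assembling over $j$ gives the stated isomorphism of graded rings. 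The main conceptual point I expect to need care with is (vi), specifically verifying that for each fixed $d$ only finitely many of the $x_i$'s contribute to $A_d$ so that the tower is eventually constant in each $m(\mathbf{t})$-slot; everything else is bookkeeping from the definition.
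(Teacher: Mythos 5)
Your proposal is correct and complete; the paper itself declines to give a proof, saying only that it ``is straightforward and is left as an exercise,'' so there is no authorial argument to compare against. Your treatment does exactly the expected unwinding: parts (i)--(v) follow by comparing coefficient-wise descriptions of $A[\mathbf{t}]_j$, $S^{(n)}_j$ and $A[[\mathbf{t}]]_j$ (direct sum versus product over monomials of the degree-$(j-|m(\mathbf{t})|)$ pieces of $A$) together with the regrouping-by-powers-of-$t_i$ isomorphism for (iii)--(v); and for (vi) you correctly isolate the one point that genuinely needs checking, namely that the hypothesis $\lim_i |x_i|=-\infty$ forces each graded piece $A_d$ to stabilize as a finite-stage polynomial ring, so that the degree-$j$ slot $\prod_{m(\mathbf{t})} A_{j-|m(\mathbf{t})|}$ is the inverse limit of the corresponding finite-stage products.
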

 
Since we shall mostly be dealing with the graded power series ring in this 
text, we make the convention of writing 
${A[[{\bf t}]]}_{\rm gr}$ as 
$A[[{\bf t}]]$, while the standard formal
power series ring will be written as $\widehat{A[[{\bf t}]]}$. 

\begin{exms}\label{exms:BT}
In the following examples, we compute $\Omega^*(BG) = \Omega^*_G(k)$ for some 
classical groups $G$ over $k$. These computations follow directly from the 
definition of equivariant cobordism and suitable choices of good pairs.
We first consider the case when $G = \G_m$ is the
multiplicative group. For any $j \ge 1$, we choose the good pair $(V_j, U_j)$,
where $V_j$ is the $j$-dimensional representation of $\G_m$ with all weights
$-1$ and $U_j$ is the complement of the origin. We see then that ${U_j}/{\G_m}
\cong \P^{j-1}_k$. Let $\zeta$ be the class of ${c_1}(\sO(-1))(1) \in
\Omega^1(\P^{j-1}_k)$. The projective bundle formula for the ordinary algebraic
cobordism implies that ${(\Omega^i_G)}_j = 
{\underset{0 \le p \le j-1} \bigoplus} \bL^{i-p}\zeta^p$. Taking the inverse
limit over $j \ge 1$, we find from this that for $i \in \Z$, 
\[
\Omega^i_{\G_m}(k) = {\underset{p \ge 0} \prod} \bL^{i-p}\zeta^p.
\]
It particular, it is easy to see that there is a natural map
\begin{equation}\label{eqn:BT1}
\Omega^*(B\G_m) \to \bL[[t]]
\end{equation}
\[
\left(x_{i_1} = \prod a^{i_1}_p \zeta^p, \cdots ,
x_{i_n} = \prod a^{i_n}_p \zeta^p\right) \mapsto {\underset{p \ge 0}\sum}
\left({\underset{1 \le j \le n}\sum} a^{i_j}_p \right) t^p,
\]
which is an isomorphism of graded $\bL$-algebras.

For a general split torus $T$ of rank $n$, we choose a basis $\{\chi_1,
\cdots , \chi_n\}$ of the character group $\widehat{T}$. This is equivalent
to a decomposition $T = T_1 \times  \cdots  \times T_n$ with each $T_i$
isomorphic to $\G_m$ and $\chi_i$ is a generator of $\widehat{T_i}$.
Let $L_{\chi}$ be the one-dimensional representation of $T$, where $T$ acts via 
$\chi$. For any $j \ge 1$, we take the good pair $(V_j, U_j)$ such that
$V_j = \stackrel{n}{\underset{i = 1} \prod} 
L^{\oplus j}_{\chi_i}$, $U_j = \stackrel{n}{\underset{i = 1} \prod} 
\left(L^{\oplus j}_{\chi_i} \setminus \{0\}\right)$ and 
$T$ acts on $V_j$ by $(t_1, \cdots , t_n)(x_1, \cdots , x_n)
= \left(\chi_1(t_1)(x_1), \cdots , \chi_n(t_n)(x_n)\right)$. 
It is then easy to see that
${U_j}/{T} \cong X_1 \times \cdots \times X_n$ with each $X_i$ isomorphic to 
$\P^{j-1}_k$.
Moreover, the $T$-line bundle $L_{\chi_i}$ gives the line bundle 
$L_{\chi_i} \stackrel{T_i}{\times} \left(L^{\oplus j}_{\chi_i} \setminus \{0\}\right)
\to X_i$ which is $\sO(\pm 1)$. Letting $\zeta_i$ be the first
Chern class of this line bundle, the projective bundle formula for the
non-equivariant cobordism shows that
\[
\Omega^i_{T}(k) = {\underset{p_1, \cdots , p_n  \ge 0} \prod} 
\bL^{i-(\stackrel{n}{\underset{i=1} \sum} p_i)}\zeta^{p_1}_1 \cdots \zeta^{p_n}_n,
\]
which is isomorphic to the set of formal power series in 
$\{\zeta_1, \cdots , \zeta_n\}$ of degree $i$ with coefficients in 
$\bL$. It particular, one concludes as in the rank one case above that

\begin{prop}\label{prop:TorusC}
Let $\{\chi_1, \cdots , \chi_n\}$ be a chosen basis of the character group of a 
split torus $T$ of rank $n$. The assignment $t_i \mapsto c^T_1(L_{\chi_i})$
yields a graded $\bL$-algebra isomorphism
\[
\bL[[t_1, \cdots , t_n]] \to \Omega^*(BT).
\]
\end{prop}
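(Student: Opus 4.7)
The paragraph preceding the proposition already carries out the main computation at the level of graded $\bL$-modules: it identifies $\Omega^i_T(k)$ with the set of formal power series of degree $i$ in classes $\zeta_1, \ldots, \zeta_n$ with coefficients in $\bL$, which is precisely the degree-$i$ piece of the graded power series ring $\bL[[t_1, \ldots, t_n]]$ under $t_i \leftrightarrow \zeta_i$. What remains is to promote this $\bL$-module isomorphism to a graded $\bL$-algebra isomorphism, and to identify the class $\zeta_i$ with the equivariant Chern class $c^T_1(L_{\chi_i})$.

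The plan is first to verify that the sequence $\{(V_j, U_j)\}$ satisfies the hypotheses of Theorem~\ref{thm:NO-Niveu}: the decomposition $V_{j+1} = V_j \oplus \prod_i L_{\chi_i}$ is immediate, and $U_j \oplus \prod_i L_{\chi_i} \subset U_{j+1}$ because nonvanishing in the first $j$ coordinates of the $i$-th factor already forces nonvanishing of all $j+1$ coordinates there. Hence
\[
\Omega^*(BT) \;\cong\; \varprojlim_j \Omega^*\bigl((\P^{j-1}_k)^n\bigr),
\]
with no niveau-quotient required. Next, iterating the projective bundle formula (Theorem~\ref{thm:Levine-M}) identifies $\Omega^*((\P^{j-1})^n)$ with the graded $\bL$-algebra $\bL[\zeta_1, \ldots, \zeta_n]/(\zeta_1^j, \ldots, \zeta_n^j)$, where $\zeta_i$ is the first Chern class of the pull-back of $\sO(\pm 1)$ from the $i$-th factor. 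I then check that the transition map supplied by Theorem~\ref{thm:NO-Niveu} (open restriction composed with a vector-bundle pull-back) corresponds to the obvious graded-ring surjection $\zeta_i \mapsto \zeta_i$ from level $j+1$ to level $j$; this reduces to observing that the mixed line bundle $L_{\chi_i} \times^T U_j \to U_j/T$ is pulled back from the $i$-th projective factor, and remains so when $j$ is enlarged.

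Taking the inverse limit of these graded-ring surjections yields, via Lemma~\ref{lem:GPSR}(iv) and Lemma~\ref{lem:GPSR}(vi), the graded power series ring $\bL[[t_1, \ldots, t_n]]$ with $t_i \mapsto \zeta_i$ at every finite level. By the construction of the equivariant Chern class in Theorem~\ref{thm:Basic}(iii), the class $\zeta_i \in \Omega^1_T(k)$ obtained as the levelwise first Chern class of $L_{\chi_i} \times^T U_j$ is by definition $c^T_1(L_{\chi_i})$, so the ring isomorphism assembled above is precisely the asserted one.

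The only point requiring genuine care is the compatibility of the graded-ring structure on $\Omega^*(BT)$ coming from the exterior product and diagonal pull-back of Theorem~\ref{thm:Basic}(v) with the levelwise product on $\Omega^*((\P^{j-1})^n)$ given by the projective bundle formula. Since the diagonal $BT \to BT \times BT$ is modelled at level $j$ by the diagonal of $U_j/T$ (after passing to a common good pair of the form $(V_j \oplus V_j,\, U_j \times U_j)$ as in the proof of Theorem~\ref{thm:Basic}(v)), and since the equivariant exterior product is by construction the inverse limit of the non-equivariant exterior products, this compatibility holds at every level and therefore passes to the limit. I expect this to be the principal (but essentially formal) obstacle; all other steps are a matter of unwinding the definitions.
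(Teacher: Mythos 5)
Your proposal is correct and follows essentially the same route the paper takes: the paper's own argument is precisely the computational paragraph preceding the proposition (explicit choice of good pairs $(V_j,U_j)$, identification of $U_j/T$ with $(\P^{j-1})^n$, projective bundle formula, and passage to the inverse limit), and you have simply made explicit the invocation of Theorem~\ref{thm:NO-Niveu} to dispense with the niveau quotient, the verification that the transition maps are the obvious truncations $\zeta_i \mapsto \zeta_i$, and the ring-structure compatibility that the paper leaves tacit. This is a faithful, slightly more detailed, version of the intended argument.
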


For $G = GL_n$, we can take a good pair for $j$ to be $(V_j, U_j)$, where
$V_j$ is the vector space of $n \times p$ matrices with $p > n$ with $GL_n$
acting by left multiplication, and $U_j$ is the open subset of matrices of
maximal rank. Then the mixed quotient is the Grassmannian $Gr(n,p)$. 
We can now calculate the cobordism ring of $Gr(n,p)$ using the projective
bundle formula (by standard stratification technique) and then we can use
the similar calculations as above to get a natural isomorphism
\begin{equation}\label{eqn:BT2}
\Omega^*(BGL_n) \to \bL[[\gamma_1, \cdots , \gamma_n]]
\end{equation}
of graded $\bL$-algebras, where $\gamma_i$'s  are the elementary symmetric
polynomials in $t_1, \cdots , t_n$ that occur in ~\eqref{eqn:BT1}.
Furthermore, using the fact that ${U_j}/{SL_n}$ fibers over $Gr(n,p)$ as
the complement of the zero-section of the determinant bundle of the 
tautological rank $n$ bundle on $Gr(n,p)$ and using the localization 
sequence of Theorem~\ref{thm:Levine-M}, we see that 
$\Omega^*(BSL_n) \xrightarrow{\cong} \bL[[\gamma_2, \cdots , \gamma_n]]$.
\end{exms}

\begin{remk}\label{remk:Non-Obv}
The cobordism rings of $BGL_n$ and $BSL_n$ have also been calculated in
\cite[Section~4]{DD} by an indirect method which involves the comparison
of these groups with the known complex cobordism rings of the corresponding
topological classifying spaces. 
However, the comparison result in {\sl loc. cit.} assumes
the existence of a natural ring homomorphism from the algebraic to the complex 
cobordism of classifying spaces, which is not immediately obvious.
\end{remk}

\section{Comparison with other cohomology theories}
\label{section:COMP}
In this paper, we fix the following notation for the tensor product
while dealing with inverse systems of modules over a commutative ring.
Let $A$ be a commutative ring with unit and let $\{L_n\}$ and $\{M_n\}$
be two inverse systems of $A$-modules with inverse limits $L$ and $M$
respectively. Following \cite{Totaro2}, one defines the 
{\sl topological tensor product} of $L$ and $M$ by
\begin{equation}\label{eqn:TTP}
L \widehat{\otimes}_A M : = 
{\underset{n}\varprojlim} \ (L_n {\otimes}_A M_n).
\end{equation} 
In particular, if $D$ is an integral domain with quotient field $F$ and if
$\{A_n\}$ is an inverse system of $D$-modules with inverse limit $A$, one 
has $A \widehat{\otimes}_{D} F = 
{\underset{n}\varprojlim} \ (A_n {\otimes}_{D} F)$.
The examples $\widehat{\Z_{(p)}} = {\underset{n}\varprojlim} \ {\Z}/{p^n}$ and 
$\Z[[x]] \otimes_{\Z} \Q \to {\underset{n}\varprojlim} \ 
\frac{\Z[x]}{(x^n)} {\otimes}_{\Z} \Q = \Q[[x]]$ show that the map  
$A \otimes_D F \to A \widehat{\otimes}_{D} F$ is in
general neither injective nor surjective. 

If $R$ is a $\Z$-graded ring and if $M$ and $N$ are two $R$-graded modules, 
then recall that $M\otimes_R N$ is also a graded $R$-module given by the 
quotient of $M \otimes_{R_0} N$ by the graded submodule generated by the 
homogeneous elements of the type $ax \otimes y - x \otimes ay$ where $a, x$ 
and $y$ are the homogeneous elements of $R$, $M$ and $N$ respectively.
If all the graded pieces $M_i$ and $N_i$ are the limits of inverse systems
$\{M^{\lambda}_i\}$ and $\{N^{\lambda}_i\}$ of $R_0$-modules, we define the
{\sl graded topological tensor product} as 
$M\widehat{\otimes}_R N = {\underset{i \in \Z}\bigoplus} 
\left(M\widehat{\otimes}_R N\right)_i$, where
\begin{equation}\label{eqn:TTP}
\left(M\widehat{\otimes}_R N\right)_i =
{\underset{\lambda}\varprojlim}
\left({\underset{j + j' = i}\bigoplus} \ 
\frac{M^{\lambda}_j \otimes_{R_0} N^{\lambda}_{j'}}
{\left(ax \otimes y - x \otimes ay\right)}\right).
\end{equation}
Notice that this reduces to the ordinary tensor product of graded $R$-modules
if the underlying inverse systems are trivial. 

\subsection{Comparison with equivariant Chow groups}
\label{subsection:Com-Chow}
Let $X$ be a $k$-scheme of dimension $d$ with a $G$-action.
It was shown by Levine and Morel \cite{LM} that there is a natural map
$\Omega_*(X) \to CH_*(X)$ of graded abelian groups which is a ring homomorphism
if $X$ is smooth. Moreover, this map induces a graded isomorphism
\begin{equation}\label{eqn:Cob*-Chow}
\Omega_*(X) \otimes_{\bL} \Z \xrightarrow{\cong} CH_*(X).
\end{equation}

Recall from \cite{Totaro1} and \cite{EG} that the equivariant Chow groups of
$X$ are defined as $CH^G_i(X) = CH_{i+l-g}\left(X \stackrel{G}{\times} U\right)$,
where $(V, U)$ is an $l$-dimensional good pair corresponding to $d-i$. It 
is known that $CH^G_i(X)$ is well-defined and can be non-zero for any
$- \infty < i \le d$. We set $CH^G_*(X) = {\underset{i}\bigoplus} CH^G_i(X)$.
If $X$ is equi-dimensional, we let $CH^i_G(X) = CH^G_{d-i}(X)$ and set
$CH^*_G(X) = {\underset{i \ge 0}\bigoplus} CH_G^i(X)$. Notice that in this case,
$CH^i_G(X)$ is same as $CH^i\left(X \stackrel{G}{\times} U\right)$,
where $(V, U)$ is an $l$-dimensional good pair corresponding to $i$.

If we fix $i \in \Z$ and choose an $l$-dimensional good pair $(V_j, U_j)$
corresponding to $d-j \ge {\rm max} (0, d-i+1)$, the universality of the 
algebraic cobordism gives a unique map 
$\Omega_{i+l-g}\left(X \stackrel{G}{\times} U_j\right) \to
CH_{i+l-g}\left(X \stackrel{G}{\times} U_j\right)$. By Lemma~\ref{lem:Niv-Chow},
this map factors through
\begin{equation}\label{eqn:Cob*-Chow1}
\frac{\Omega_{i+l-g}\left(X \stackrel{G}{\times} U_j\right)}
{F_{i+l-g-1}\Omega_{i+l-g}\left(X \stackrel{G}{\times} U_j\right)} \to
CH_{i+l-g}\left(X \stackrel{G}{\times} U_j\right).
\end{equation}
Since $j \le i-1$ by the choice, we have $d+l-g -(d-j) \le i+l-g-1$ and hence
we get the map
\begin{equation}\label{eqn:Cob*-Chow2}
{\Omega^G_{i+l-g}(X)}_{d-j} = 
\frac{\Omega_{i+l-g}\left(X \stackrel{G}{\times} U_j\right)}
{F_{d+l-g-(d-j)}\Omega_{i+l-g}\left(X \stackrel{G}{\times} U_j\right)} \to
CH_{i+l-g}\left(X \stackrel{G}{\times} U_j\right).
\end{equation}
It is easily shown using the proof of Lemma~\ref{lem:ECob1} that this map
is independent of the choice of the good pair $(V_j, U_j)$. Taking the 
inverse limit over $d-j \ge 0$, we get a natural map $\Omega^G_i(X) \to 
CH^G_i(X)$ and hence a map of graded abelian groups
\begin{equation}\label{eqn:Cob*-Chow3}
\Phi_X : \Omega^G_*(X) \to CH^G_*(X)
\end{equation}
which is in fact a map of graded $\bL$-modules.
Notice that the right side of ~\eqref{eqn:Cob*-Chow2} does not depend on $j$
as long as $d-j \gg 0$. If $X$ is equi-dimensional, we write the above map
cohomologically as $\Omega^*_G(X) \to CH^*_G(X)$.

\begin{prop}\label{prop:CBCH} 
The map $\Phi_X$ induces an isomorphism of graded $\bL$-modules
\[
\Phi_X : \Omega^G_*(X) \widehat{\otimes}_{\bL} \Z \xrightarrow{\cong} CH^G_*(X).
\]
\end{prop}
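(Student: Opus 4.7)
My plan is to establish the proposition one graded piece at a time, by showing that for each $i \in \Z$, the inverse system whose limit defines $\bigl(\Omega^G_*(X) \widehat{\otimes}_{\bL} \Z\bigr)_i$ is eventually constant with value $CH^G_i(X)$. This reduces matters to the stage-wise non-equivariant comparison~\eqref{eqn:Cob*-Chow} together with Lemma~\ref{lem:Niv-Chow}, which tells us precisely that the niveau filtration is annihilated by the passage to Chow groups.

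Fix $i \in \Z$, write $X_G = X \stackrel{G}{\times} U_j$ for a chosen $l$-dimensional good pair $(V_j, U_j)$, and first construct, for each $j \ge d-i+1$, a natural isomorphism
\[
\Phi_{X,j} \colon \Omega^G_i(X)_j \otimes_{\bL} \Z \xrightarrow{\cong} CH^G_i(X).
\]
To do this, I would apply $- \otimes_{\bL} \Z$ to the defining short exact sequence
\[
F_{d+l-g-j}\Omega_{i+l-g}(X_G) \to \Omega_{i+l-g}(X_G) \surj \Omega^G_i(X)_j,
\]
use right exactness, identify the middle term with $CH_{i+l-g}(X_G) = CH^G_i(X)$ via~\eqref{eqn:Cob*-Chow}, and observe that the leftmost term already maps to zero in $CH^G_i(X)$: indeed Lemma~\ref{lem:Niv-Chow} kills $F_{i+l-g-1}\Omega_{i+l-g}(X_G)$ in Chow, and the inequality $j \ge d-i+1$ gives precisely $d+l-g-j \le i+l-g-1$. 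Independence of $\Phi_{X,j}$ from the chosen good pair follows by the averaging argument of Lemma~\ref{lem:ECob1}.

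Next I would verify compatibility of the $\Phi_{X,j}$ with the surjective transition maps $\Omega^G_i(X)_{j'} \surj \Omega^G_i(X)_j$ for $j' \ge j \ge d-i+1$; this reduces to observing that, after choosing a common good pair, both $\Omega^G_i(X)_{j'} \otimes_{\bL} \Z$ and $\Omega^G_i(X)_j \otimes_{\bL} \Z$ are quotients of the single group $\Omega_{i+l-g}(X_G) \otimes_{\bL} \Z \cong CH^G_i(X)$, and the two quotient maps coincide under $\Phi_{X,j'}$ and $\Phi_{X,j}$. Thus the inverse system is eventually constant, and taking the limit gives
\[
\bigl(\Omega^G_*(X) \widehat{\otimes}_{\bL} \Z\bigr)_i = \varprojlim_j \bigl(\Omega^G_i(X)_j \otimes_{\bL} \Z\bigr) \xrightarrow{\cong} CH^G_i(X).
\]
Summing over $i$ and checking naturality with respect to the graded $\bL$-action (which factors through $\bL_0 = \Z$) produces the claimed isomorphism of graded $\bL$-modules.

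The main obstacle, as far as I can see, is bookkeeping rather than something structural: one must align a single good pair $(V_j, U_j)$ with $j$ large enough both to be admissible for $CH^G_i(X)$ and to annihilate the niveau-filtered piece under $\otimes_{\bL}\Z$. The conceptual reason this works cleanly is that the niveau filtration built into the definition of $\Omega^G_i(X)_j$ kills, for $j \gg 0$, exactly the classes that would be killed by $\otimes_{\bL}\Z$ at the stage where $CH^G_i(X)$ stabilizes, so the inverse limit and the tensor product commute harmlessly in this situation.
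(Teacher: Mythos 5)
Your proposal is correct and follows essentially the same route as the paper's proof: both identify the stage-$j$ quotient $\Omega^G_i(X)_j/\bigl(\bL^{<0}\cdot\Omega^G_*(X)_j\bigr)_i$ with $CH^G_i(X)$ by combining the Levine-Morel isomorphism~\eqref{eqn:Cob*-Chow} with Lemma~\ref{lem:Niv-Chow}, and then pass to the inverse limit over $j$. The paper packages the passage to the limit via the Mittag-Leffler condition on the corresponding short exact sequences, whereas you observe directly that the inverse system of quotients is eventually constant; this is a cosmetic rather than a substantive difference.
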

\begin{proof} 
We fix $i \in \Z$ and choose an 
$l$-dimensional good pair $(V_j, U_j)$ corresponding to 
$d-j \ge {\rm max} (0, d-i+1)$ and set $X_j = X \stackrel{G}{\times} U_j$.
This gives the natural map as in ~\eqref{eqn:Cob*-Chow2} which in
turn yields an exact sequence 
\begin{equation}\label{eqn:Cob*-Chow2*}
0 \to \left(\bL^{<0} \frac{\Omega_*(X_j)}{F_{l+j-g}\Omega_*(X_j)} \cap 
\frac{\Omega_{i+l-g}(X_j)}{F_{l+j-g}\Omega_*(X_j)} \right) \to
\frac{\Omega_{i+l-g}(X_j)}{F_{l+j-g}\Omega_{i+l-g}(X_j)} \to CH_{i+l-g}(X_j) \to 0
\end{equation}
by ~\eqref{eqn:Cob*-Chow}.
Since $\frac{\Omega_*(X_{j-1})}{F_{l+j-1-g}\Omega_*(X_{j-1})} \to
\frac{\Omega_*(X_j)}{F_{l+j-g}\Omega_*(X_j)}$ is a surjective map of
graded $\bL$-modules (as can be
easily seen by choosing a good pair corresponding to $d-j+1$), we see that 
the left and the middle terms of the above exact sequence form inverse systems
of surjective maps. Taking the limit, we get an exact sequence
\begin{equation}\label{eqn:Cob*-Chow2*1}
0 \to {\underset{d-j \ge 0}\varprojlim}
\left(\bL^{<0} \frac{\Omega_*(X_j)}{F_{l+j-g}\Omega_*(X_j)} \cap 
\frac{\Omega_{i+l-g}(X_j)}{F_{l+j-g}\Omega_{i+l-g}(X_j)} \right) \to \Omega^G_i(X) \to
CH^G_i(X) \to 0.
\end{equation}
Taking the direct sum over $i \in \Z$ and using ~\eqref{eqn:TTP}, we get
\[
\Omega^G_*(X) \widehat{\otimes}_{\bL} \Z \xrightarrow{\cong} CH^G_*(X).
\]
\end{proof}
Let $C(G) = CH^*_G(k)$ denote the equivariant Chow ring of the field $k$.
\begin{cor}\label{cor:CCH*}
For a $k$-scheme $X$ with a $G$-action, the natural map
\[
\Omega^G_*(X) {\otimes}_{S(G)} C(G) \to CH^G_*(X)
\]
is an isomorphism of $C(G)$-modules. This is a ring isomorphism if $X$
is smooth.
\end{cor}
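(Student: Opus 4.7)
The plan is to deduce the corollary from Proposition~\ref{prop:CBCH} by identifying both sides of the claimed isomorphism as quotients of $\Omega^G_*(X)$ by the same $S(G)$-submodule (writing $S(G) := \Omega^*(BG)$). Applying the proposition to $X = \Spec k$ yields a surjection $\Phi_{\Spec k}\colon S(G) \twoheadrightarrow C(G)$; let $J_0$ be its kernel, so $C(G) \cong S(G)/J_0$ and hence $\Omega^G_*(X) \otimes_{S(G)} C(G) \cong \Omega^G_*(X)/J_0\Omega^G_*(X)$ by right-exactness of the tensor product. Applied to $X$ itself, the proposition identifies $CH^G_*(X) \cong \Omega^G_*(X)/K_X$, where $K_X := \ker(\Phi_X)$. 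The candidate map is thus the quotient $\Omega^G_*(X)/J_0\Omega^G_*(X) \twoheadrightarrow \Omega^G_*(X)/K_X$, which is manifestly $C(G)$-linear and surjective; the inclusion $J_0\Omega^G_*(X) \subseteq K_X$ follows from the $S(G)$-linearity of $\Phi_X$ combined with $\Phi_X(J_0) = 0$.

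The heart of the argument is the reverse inclusion $K_X \subseteq J_0\Omega^G_*(X)$, which I would establish level-by-level in the defining inverse system $\{\Omega^G_*(X)_j\}_{j\ge 0}$ of \eqref{eqn:E-cob*}. Writing $A_j$ for $\Omega^G_*(X)_j$ and $B_j$ for its counterpart with $X=\Spec k$, the $S(G)$-action on $\Omega^G_*(X)$ is realised at level $j$ by a $B_j$-action on $A_j$ coming from the external product of Theorem~\ref{thm:Basic}(v). At each level, any element of $\bL^{<0}\cdot A_j$ can be written as $\sum (a_k\cdot 1_{B_j})\cdot x_k$ with $a_k\in\bL^{<0}\subseteq J_0|_j$ and $x_k\in A_j$; conversely $(J_0|_j)\cdot A_j \subseteq \bL^{<0}\cdot A_j$ since $J_0|_j$ is generated over $B_j$ by $\bL^{<0}$. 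By the level-$j$ version of the exact sequence~\eqref{eqn:Cob*-Chow2*} appearing in the proof of Proposition~\ref{prop:CBCH}, both agree with the level-$j$ kernel of $A_j \to CH_{i+l-g}(X_G)$. Since the inverse system $\{(J_0|_j)\cdot A_j\}_j$ consists of surjections, Mittag--Leffler arguments as in the proofs of Theorem~\ref{thm:ELS} and Proposition~\ref{prop:CBCH} identify its inverse limit with $K_X$; on the other hand $J_0\Omega^G_*(X)$ surjects onto each $(J_0|_j)\cdot A_j$, giving the required equality in the limit. This passage from levels to the limit is the main obstacle, as it requires carefully matching the image of the $S(G)$-action on the inverse limit with the inverse limit of level-wise actions.

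Finally, when $X$ is smooth, Theorem~\ref{thm:Basic} gives $\Omega^G_*(X)$ the structure of an $S(G)$-algebra via its intersection product and pullback along $X\to\Spec k$, and $CH^G_*(X)$ is similarly a $C(G)$-algebra. The map $\Phi_X$ is a ring homomorphism, verifiable at each finite level via the Levine--Morel ring map $\Omega^*\to CH^*$ of \eqref{eqn:Cob*-Chow}. Base change along the ring homomorphism $S(G)\to C(G)$ preserves the commutative-algebra structure, so the isomorphism $\Omega^G_*(X)\otimes_{S(G)} C(G) \xrightarrow{\cong} CH^G_*(X)$ is automatically a $C(G)$-algebra isomorphism, completing the proof.
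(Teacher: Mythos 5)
Your overall strategy is the same as the paper's: both proofs reduce the corollary to applying Proposition~\ref{prop:CBCH} twice, once to $\Spec k$ to identify $C(G)$ with $S(G)\widehat{\otimes}_{\bL}\Z$, and once to $X$ to identify $CH^G_*(X)$ with $\Omega^G_*(X)\widehat{\otimes}_{\bL}\Z$, and then to the identification of $\Omega^G_*(X)\otimes_{S(G)}C(G)$ with $\Omega^G_*(X)\widehat{\otimes}_{\bL}\Z$. You have unrolled the tensor products into quotients, which correctly isolates what must be checked: namely the reverse inclusion $K_X\subseteq J_0\Omega^G_*(X)$, or equivalently that the ideal $J_0 = \ker\bigl(S(G)\to C(G)\bigr)$ generates the entire kernel of $\Phi_X$. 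The paper compresses this into the one-line assertion $\Omega^G_*(X)\otimes_{S(G)}\bigl(S(G)\widehat{\otimes}_{\bL}\Z\bigr)\cong\Omega^G_*(X)\widehat{\otimes}_{\bL}\Z$ with no further justification, so you are attacking exactly the right point.

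The flaw is at the step you yourself flag, and it is genuine: Mittag--Leffler gives you $\varprojlim_j (J_0|_j)\cdot A_j = K_X$, and the surjectivity of $\Omega^G_*(X)\to A_j$ gives that $J_0\Omega^G_*(X)$ maps onto $(J_0|_j)\cdot A_j$ for each $j$, but these two facts do \emph{not} combine to give $J_0\Omega^G_*(X) = K_X$. A subgroup of an inverse limit that surjects onto every finite stage need not be the whole inverse limit (compare $\Z\subset\varprojlim\Z/p^n$). Concretely, an element $(y_j)\in K_X$ admits for each $j$ a decomposition $y_j = \sum_k a^{(j)}_k x^{(j)}_k$ with $a^{(j)}_k\in\bL^{<0}$, but these decompositions are neither compatible across $j$ nor bounded in the number of summands, whereas membership in $J_0\Omega^G_*(X)$ requires a \emph{single finite} expression $\sum_k a_k x_k$ with $a_k\in J_0$ and $x_k\in\Omega^G_*(X)$ valid simultaneously at all levels. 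Closing this requires a new ingredient — some uniform boundedness of the $\bL$-action on $\Omega^G_*(X)$ in each degree, or a finite generation or completeness argument — and is not a formal consequence of the Mittag--Leffler machinery you invoke from Theorem~\ref{thm:ELS} and Proposition~\ref{prop:CBCH}, which only control the exactness of $\varprojlim$, not the comparison between the algebraic tensor product $\otimes_{S(G)}$ and the topological one $\widehat{\otimes}_{\bL}$.
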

\begin{proof} 
It is clear that the above map is a ring homomorphism if $X$ is smooth.
So we only need to prove the first assertion. But this follows directly 
from the isomorphisms $\Omega^G_*(X) {\otimes}_{S(G)} C(G) 
\cong \Omega^G_*(X) {\otimes}_{S(G)} \left(S(G) \widehat{\otimes}_{\bL} \Z
\right) \cong \Omega^G_*(X)  \widehat{\otimes}_{\bL} \Z$ using
Proposition~\ref{prop:CBCH}.
\end{proof}

\subsection{Comparison with equivariant $K$-theory}
It was shown by Levine and Morel in \cite[Corollary~11.11]{LM1} that the
universal property of the algebraic cobordism implies that there is a
canonical isomorphism of oriented cohomology theories
\begin{equation}\label{eqn:COb-K} 
\Omega^*(X) \otimes_{\bL} \Z[\beta, \beta^{-1}] \xrightarrow{\cong} 
K_0(X)[\beta, \beta^{-1}]
\end{equation}
in the category of smooth $k$-schemes.
This was later generalized to a complete algebraic analogue of the
Conner-Floyd isomorphism
\[
MGL^* \otimes_{\bL} \Z[\beta, \beta^{-1}] \xrightarrow{\cong} 
K_*(X)[\beta, \beta^{-1}]
\]
between the motivic cobordism and algebraic $K$-theory by Panin, Pimenov and 
R\"ondigs \cite{PPR}.
Since the equivariant cobordism is a Borel style cohomology theory, one
can not expect an equivariant version of the isomorphism ~\eqref{eqn:COb-K}
even with the rational coefficients. However, we show here that the
equivariant Conner-Floyd isomorphism holds after we base change the above
by the completion of the representation ring of $G$ with respect to the
ideal of virtual representations of rank zero. In fact, it can be shown 
easily that such a base change is the minimal requirement. In 
Theorem~\ref{thm:CKtheory}, all cohomology groups are considered with
rational coefficients ({\sl cf.} Section~\ref{section:tori}).

For a linear algebraic group $G$, let $R(G)$ denote the representation ring
of $G$. Let $I$ denote the ideal of of virtual representations of rank zero
in $R(G)$ and let $\widehat{R(G)}$ denote the associated completion of
$R(G)$. Let $\widehat{C(G)}$ denote the completion of $C(G)$ 
with respect to the augmentation ideal of algebraic cycles of
positive codimensions.
For a scheme $X$ with $G$-action, let $K^G_0(X)$ denote the Grothendieck
group of $G$-equivariant vector bundles on $X$. By \cite[Theorem~4.1]{EDR},
there is a natural ring isomorphism $\widehat{R(G)} \xrightarrow{\cong}
\widehat{C(G)}$ given by the equivariant Chern character. We identify these 
two rings via this isomorphism.
In particular, the maps $S(G) \surj C(G) \to \widehat{C(G)}$ yield
a ring homomorphism $S(G) \to \widehat{R(G)}$.

\begin{thm}\label{thm:CKtheory}
For a smooth scheme $X$ with a $G$-action, there is a natural isomorphism
of rings
\[
\Psi_X: \Omega^*_G(X) {\otimes}_{S(G)} \widehat{R(G)} \xrightarrow{\cong}
K^G_0(X) \otimes_{R(G)} \widehat{R(G)}.
\]
\end{thm}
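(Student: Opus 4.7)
The plan is to identify $\Psi_X$ as the composition of the Chow-theoretic comparison from Corollary~\ref{cor:CCH*} with an equivariant Chern character isomorphism in $K$-theory which, for smooth $X$, is the main result imported from \cite{Krishna}.

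First, since $X$ is smooth, Corollary~\ref{cor:CCH*} gives a ring isomorphism
\[
\Omega^*_G(X) \otimes_{S(G)} C(G) \xrightarrow{\cong} CH^*_G(X).
\]
Next, by \cite[Theorem~4.1]{EDR} the equivariant Chern character induces a ring isomorphism $\widehat{C(G)} \xrightarrow{\cong} \widehat{R(G)}$, and the composite $S(G) \to C(G) \to \widehat{C(G)} \xrightarrow{\cong} \widehat{R(G)}$ coincides with the map appearing in the statement of the theorem. Tensoring the displayed isomorphism over $C(G)$ with $\widehat{R(G)}$, and using associativity of tensor products, yields
\[
\Omega^*_G(X) \otimes_{S(G)} \widehat{R(G)} \;\xrightarrow{\cong}\; CH^*_G(X) \otimes_{C(G)} \widehat{R(G)}.
\]

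Second, the main results of \cite{Krishna}, applied to the smooth scheme $X$ and combined with the Chern character identification above, furnish (with rational coefficients) a ring isomorphism
\[
CH^*_G(X) \otimes_{C(G)} \widehat{R(G)} \;\xrightarrow{\cong}\; K^G_0(X) \otimes_{R(G)} \widehat{R(G)},
\]
realized by the equivariant Chern character. Composing the two isomorphisms defines $\Psi_X$ and simultaneously proves it to be an isomorphism of rings.

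The main obstacle is to verify that the composite so defined coincides with the natural map $\Psi_X$ that one would build directly from the universal property of algebraic cobordism, namely by specializing $\bL \to \Z[\beta,\beta^{-1}]$ at the multiplicative formal group law, applying the non-equivariant Conner--Floyd isomorphism \eqref{eqn:COb-K} at each level to the mixed quotient $X_j = X \stackrel{G}{\times} U_j$, passing to the quotients by the niveau filtration and then taking the inverse limit over the good pairs. This compatibility reduces, at the level of each $X_j$, to the familiar commutativity of the triangle relating the maps $\Omega^*(X_j) \to K_0(X_j)[\beta,\beta^{-1}]$, $\Omega^*(X_j) \to CH^*(X_j)$, and the Chern character $K_0(X_j)\otimes \Q \to CH^*(X_j)\otimes\Q$. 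The passage from the level-wise compatibility to the inverse-limit statement is controlled by Proposition~\ref{prop:CBCH} together with Corollary~\ref{cor:Niv-open} and Lemma~\ref{lem:Niv-Hi}, which guarantee that the niveau filtration is compatible with the constructions on both sides; rational coefficients make the required completion arguments well-behaved.
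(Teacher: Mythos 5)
Your proof is correct and follows essentially the same route as the paper: tensor the isomorphism of Corollary~\ref{cor:CCH*} up to $\widehat{C(G)}\cong\widehat{R(G)}$ (via \cite[Theorem~4.1]{EDR}) and compose with the equivariant Chern character isomorphism from \cite[Theorem~1.2]{Krishna}. Your closing paragraph about matching the composite against a map built level-wise from the Conner--Floyd isomorphism is not needed: the paper simply \emph{defines} $\Psi_X$ to be this composite, with ``natural'' referring to functoriality in $X$, so there is no independently constructed map to compare against.
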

\begin{proof}
By \cite[Theorem~1.2]{Krishna}, there is a Chern character isomorphism
$K^G_0(X) \otimes_{R(G)} \widehat{R(G)} \xrightarrow{\cong}
CH^*(X) \otimes_{C(G)} \widehat{C(G)}$
of cohomology rings. Thus, we only need to show that the map
$\Omega^*_G(X) {\otimes}_{S(G)} \widehat{C(G)} \to CH^*(X) \otimes_{C(G)} 
\widehat{C(G)}$
is an isomorphism. However, we have
\[
\begin{array}{lll}
\Omega^*_G(X) {\otimes}_{S(G)} \widehat{C(G)} & \cong &
\left(\Omega^*_G(X) {\otimes}_{S(G)} C(G)\right) \otimes_{C(G)} 
\widehat{C(G)} \\
& \cong & CH^*_G(X) \otimes_{C(G)} 
\widehat{C(G)},
\end{array}
\]
where the last isomorphism follows from Corollary~\ref{cor:CCH*}.
This finishes the proof.
\end{proof}

\subsection{Comparison with complex cobordism}
Let $G$ be a complex Lie group acting on a finite $CW$-complex $X$. We define 
the {\sl equivariant complex cobordism} ring of $X$ as 
\begin{equation}\label{eqn:ECompC}
MU^*_G(X) := MU^*\left(X \stackrel{G}{\times} EG\right)
\end{equation}
where $EG \to BG$ is universal principal $G$-bundle over the classifying
space $BG$ of $G$. If $E'G \to B'G$ is another such bundle, then the
projection $(X \times EG \times E'G)/G \to X \stackrel{G}{\times} EG$ is a
fibration with contractible fiber. In particular, $MU^*_G(X)$ is 
well-defined. Moreover, if $G$ acts freely on $X$ with quotient $X/G$, then
the map $X \stackrel{G}{\times} EG \to X/G$ is a fibration with contractible
fiber $EG$ and hence we get $MU^*_G(X) \cong MU^*(X/G)$. 

For a linear algebraic group $G$ over $\C$ acting on a $\C$-scheme $X$, let
$H^*_G(X, A)$ denote the (equivariant) cohomology of the complex analytic 
space $X(\C)$ with coefficients in the ring $A$.

\begin{prop}\label{prop:Alg-Comp}
Assume that $X \in \sV^S_G$ is such that $H^*_G(X, \Z)$ is torsion-free. Then 
there is a natural homomorphism of graded rings
\[
\rho^G_X : \ \Omega^*_G(X) \to MU^{2*}_G(X).
\]
\end{prop}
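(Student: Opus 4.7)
The plan is to realize $\rho^G_X$ as an inverse limit of the non-equivariant topological realization maps $\Phi^{top}$ of \eqref{eqn:A-C-C}, evaluated on a tower of smooth mixed quotients. First I would invoke Theorem~\ref{thm:NO-Niveu} to fix a sequence of $l_j$-dimensional good pairs $\{(V_j,U_j)\}_{j\ge 0}$ with $V_{j+1}=V_j\oplus W_j$ and $U_j\oplus W_j\subset U_{j+1}$. Writing $X_G^j := X \stackrel{G}{\times} U_j$, each $X_G^j$ is smooth of dimension $d+l_j-g$ (with $d=\dim X$, $g=\dim G$), and Theorem~\ref{thm:NO-Niveu} yields $\Omega^i_G(X)\cong \varprojlim_j \Omega^i(X_G^j)$, with transition map obtained by composing the open-immersion restriction to $X\stackrel{G}{\times}(U_j\oplus W_j)\subset X_G^{j+1}$ with the inverse of the vector-bundle pullback $\Omega^i(X_G^j)\xrightarrow{\cong}\Omega^i(X\stackrel{G}{\times}(U_j\oplus W_j))$.

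Second, I would apply $\Phi^{top}_{X_G^j} : \Omega^i(X_G^j) \to MU^{2i}(X_G^j(\C))$ at every level. Since $\Phi^{top}$ is natural for smooth pullbacks (in particular open immersions and vector-bundle projections), these assemble into a morphism of inverse systems, and on passing to limits produce a graded group homomorphism
\[
\Omega^i_G(X) \longrightarrow \varprojlim_j MU^{2i}(X_G^j(\C)).
\]
Multiplicativity is inherited from the fact that $\Phi^{top}$ is a ring map on smooth schemes, together with the observation that products on both $\Omega^*_G$ and $MU^{2*}_G$ are constructed levelwise via exterior product and diagonal pullback and so survive the inverse limit.

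Third, and crucially, I need to identify this inverse limit with $MU^{2i}_G(X) = MU^{2i}\bigl(X(\C) \stackrel{G(\C)}{\times} EG(\C)\bigr)$. The complement $V_j(\C) \setminus U_j(\C)$ has real codimension at least $2j$ in $V_j(\C)$, so $U_j(\C)$ is $(2j-2)$-connected; by obstruction theory one may choose a compatible tower of $G(\C)$-equivariant maps $U_j(\C)\to EG(\C)$, exhibiting $X(\C) \stackrel{G(\C)}{\times} EG(\C) \simeq \hocolim_j X_G^j(\C)$. Milnor's lim-$1$ sequence then reads
\[
0 \to {\varprojlim_j}^{1} MU^{2i-1}(X_G^j(\C)) \to MU^{2i}_G(X) \to \varprojlim_j MU^{2i}(X_G^j(\C)) \to 0.
\]
The torsion-freeness of $H^*_G(X,\Z)$ enters here: for a fixed degree, $H^*(X_G^j(\C),\Z)$ stabilizes to $H^*_G(X,\Z)$ as $j\to\infty$ and is therefore torsion-free; this forces the Atiyah--Hirzebruch spectral sequence for $MU^*$ of each $X_G^j(\C)$ to degenerate, presenting each $MU^*(X_G^j(\C))$ as a free $MU^*$-module whose transition maps are surjections in any fixed degree, whence $\varprojlim^1=0$.

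The main obstacle is this final step: tracking the connectivities, identifying the topological homotopy colimit of the mixed quotients with the Borel construction, and invoking the torsion-free hypothesis to force Atiyah--Hirzebruch degeneration and the vanishing of $\varprojlim^1$. Everything else reduces to Theorem~\ref{thm:NO-Niveu} and the naturality and multiplicativity of the already-established non-equivariant realization $\Phi^{top}$.
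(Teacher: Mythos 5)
Your proposal follows the paper's proof essentially step for step: build $\Omega^*_G(X)$ as the inverse limit over a sequence of good pairs via Theorem~\ref{thm:NO-Niveu}, apply the non-equivariant realization $\Phi^{top}$ levelwise (the paper invokes the universality of $\Omega^*$ to produce the same map of inverse systems), pass to limits, and then identify $\varprojlim_j MU^{2i}(X_G^j(\C))$ with $MU^{2i}_G(X)$ by the Milnor $\varprojlim^1$ sequence together with the torsion-freeness hypothesis (the paper cites \cite[Corollary~1]{Landweber} for this last vanishing). So the route is the same; you only flesh out the final step that the paper compresses into a citation.

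One small imprecision in that fleshing-out: you assert that the AHSS for each individual $X_G^j(\C)$ degenerates because $H^*(X_G^j(\C),\Z)$ is torsion-free. That is not what the hypothesis gives; for fixed $j$ the mixed space can perfectly well have torsion in high degrees. What is true is that in a \emph{fixed} total degree $H^*(X_G^j(\C),\Z)\to H^*(X_G^{j-1}(\C),\Z)$ is eventually an isomorphism as $j\to\infty$, stabilizing to the torsion-free $H^*_G(X,\Z)$; comparing Atiyah--Hirzebruch spectral sequences in that stable range shows the transition maps on $MU^*$ are eventually isomorphisms in each degree, which is the Mittag--Leffler property one actually uses to kill $\varprojlim^1$. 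Phrased this way the step matches Landweber's result and the paper's citation, whereas ``each $MU^*(X_G^j(\C))$ is a free $MU^*$-module'' is stronger than needed and not literally implied by the hypothesis.
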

\begin{proof} If $\{(V_j, U_j)\}$ is a sequence of good pairs as in 
Theorem~\ref{thm:NO-Niveu}, then the universality of the Levine-Morel
cobordism gives a natural $\bL$-algebra map of inverse systems
\[
\Omega^i\left(X \stackrel{G}{\times} U_j\right) \to
MU^{2i}\left(X \stackrel{G}{\times} U_j\right)
\]
which after taking limits yields the map
\begin{equation}\label{eqn:Alg-Comp1}
\Omega^i_G(X) = {\underset{j \ge 0}\varprojlim} \
\Omega^i\left(X \stackrel{G}{\times} U_j\right) \to
{\underset{j \ge 0}\varprojlim} \
MU^{2i}\left(X \stackrel{G}{\times} U_j\right)
\cong {\underset{j \ge 0}\varprojlim} \
MU^{2i}\left(X \stackrel{G}{\times} (BG)_j\right).
\end{equation}
Thus, we only need to show that the map 
\[
MU^i\left(X \stackrel{G}{\times} EG\right) \to
{\underset{j \ge 0}\varprojlim} \
MU^{i}\left(X \stackrel{G}{\times} (BG)_j\right)
\]
is an isomorphism. But this follows from our assumption, 
\cite[Corollary~1]{Landweber} and the Milnor exact sequence
\begin{equation}\label{eqn:Alg-Comp2}
\xymatrix@C.6pc{
0 \ar[r] & {\underset{j \ge 0}{\varprojlim}^1} \
MU^{i}\left(X \stackrel{G}{\times} (BG)_j\right)  \ar[r] & 
MU^i\left(X \stackrel{G}{\times} EG\right)  \ar[r] & 
{\underset{j \ge 0}\varprojlim} \
MU^{i}\left(X \stackrel{G}{\times} (BG)_j\right)  \ar[r] &  0.}
\end{equation}
\end{proof}
The following is an immediate consequence of Propositions~\ref{prop:CBCH} 
and ~\ref{prop:Alg-Comp}.
\begin{cor}\label{cor:Alg-Comp*1}
For any $X \in \sV^S_G$, there is a natural map of graded 
$\bL_{\Q}$-algebras
\[
\rho^G_X : \ \Omega^*_G(X)_{\Q} \to MU^{2*}_G(X)_{\Q}.
\]
In particular, there is a natural ring homomorphism
\[
\ov{\rho}^G_X : \ CH^*_G(X)_{\Q} \to MU^{2*}_G(X)_{\Q} {\widehat{\otimes}}_{\bL_{\Q}} 
\Q
\]
which factors the cycle class map $CH^*_G(X) \to H^{2*}_G(X, \Q)$.
\end{cor}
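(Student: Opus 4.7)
The strategy is to adapt the construction in Proposition~\ref{prop:Alg-Comp} to arbitrary $X \in \sV^S_G$ by working rationally, and then to derive the second map by tensoring with $\Q$ over $\bL_\Q$ and applying Proposition~\ref{prop:CBCH}.

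Fix a sequence of good pairs $\{(V_j, U_j)\}_{j \ge 0}$ as in Theorem~\ref{thm:NO-Niveu}. The universality of the Levine--Morel cobordism produces compatible ring homomorphisms $\Omega^i(X \stackrel{G}{\times} U_j) \to MU^{2i}(X \stackrel{G}{\times} U_j)$ for each $j$, and taking inverse limits (using Theorem~\ref{thm:NO-Niveu} on the left) gives a natural map
\[
\Omega^*_G(X) \to \varprojlim_j MU^{2*}\bigl(X \stackrel{G}{\times} (BG)_j\bigr).
\]
The task is then to identify the right hand side, after rationalization, with $MU^{2*}_G(X)_\Q$. By the Milnor exact sequence~\eqref{eqn:Alg-Comp2}, it suffices to show that the rationalized $\varprojlim^1$ term vanishes. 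This is the main technical point: the hypothesis of Proposition~\ref{prop:Alg-Comp} requires integral torsion-freeness of $H^*_G(X, \Z)$ in order to invoke \cite[Corollary~1]{Landweber}, but after tensoring with $\Q$ this condition is automatic, since $H^*_G(X, \Q)$ is always torsion-free as a $\Q$-vector space. A rational version of Landweber's argument therefore yields the $\varprojlim^1$ vanishing and produces $\rho^G_X : \Omega^*_G(X)_\Q \to MU^{2*}_G(X)_\Q$.

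For the second assertion, apply $-\widehat{\otimes}_{\bL_\Q} \Q$ to $\rho^G_X$; by the rationalization of Proposition~\ref{prop:CBCH}, the source becomes $CH^*_G(X)_\Q$, yielding $\ov{\rho}^G_X$. To verify the factorization, observe that at each finite level the classical topological cycle class map factors through complex cobordism via the augmentation $\bL \to \Z$, and passing to the inverse limit over $j$ (and using the identifications of equivariant Chow, cobordism, and complex cobordism at the level of mixed quotients) gives the asserted factorization of the equivariant cycle class map $CH^*_G(X)_\Q \to H^{2*}_G(X, \Q)$. The main obstacle throughout is the rational $\varprojlim^1$ vanishing; everything else is a formal consequence of universality, Proposition~\ref{prop:CBCH}, and compatibility of the constructions at each finite-level approximation.
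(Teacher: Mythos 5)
Your proposal follows essentially the same route as the paper: the paper's own proof is the single sentence "The following is an immediate consequence of Propositions~\ref{prop:CBCH} and ~\ref{prop:Alg-Comp}," and you are, in effect, unpacking that claim by re-running the construction of Proposition~\ref{prop:Alg-Comp} rationally and then applying Proposition~\ref{prop:CBCH} for the Chow-theoretic consequence. Your treatment of the second assertion (apply $\widehat{\otimes}_{\bL_\Q}\Q$ and invoke Proposition~\ref{prop:CBCH}) and of the factorization through the cycle class map are correct and match the intended reading.

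However, the justification you give for the one genuinely nontrivial step — dropping the torsion-freeness hypothesis of Proposition~\ref{prop:Alg-Comp} — does not hold up as written. You argue that the hypothesis becomes "automatic" because "$H^*_G(X,\Q)$ is always torsion-free as a $\Q$-vector space," but that observation is vacuous: every $\Q$-vector space is torsion-free, so it cannot be the content of a rational Landweber-type theorem. What is actually required is the vanishing of $\varprojlim^1_j MU^{2i-1}\bigl(X\stackrel{G}{\times}(BG)_j\bigr)\otimes_\Z\Q$, i.e.\ that the integral $\varprojlim^1$ term in the Milnor sequence is a torsion group — and $\varprojlim^1$ of towers of finitely generated abelian groups need not be torsion (e.g.\ $\Z\xleftarrow{2}\Z\xleftarrow{2}\cdots$ has $\varprojlim^1\cong\Z_2/\Z$, a nonzero $\Q$-vector space). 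One either needs a genuine argument that this specific $\varprojlim^1$ is torsion, or one must interpret the target as $\varprojlim_j\bigl(MU^{2*}(X\stackrel{G}{\times}(BG)_j)\otimes\Q\bigr)$, in which case the Mittag--Leffler condition — coming from the rational collapse of the Atiyah--Hirzebruch spectral sequence plus finite-dimensionality of $H^*\bigl(X\stackrel{G}{\times}(BG)_j;\Q\bigr)$ in each degree — kills the relevant $\varprojlim^1$ directly. Note that rationalization does not commute with $\varprojlim$ or $\varprojlim^1$, so these two readings are not interchangeable; the proposal should make explicit which one is meant and supply the missing argument accordingly. (To be fair, the paper's one-line proof elides exactly the same point, so your instinct to flag $\varprojlim^1$ as the crux is correct; the gap is in the proposed resolution, not the diagnosis.)
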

\begin{cor}\label{cor:Alg-Comp*}
There is a natural morphism $\Omega^*(BG) \to MU^{2*}(BG)$ of graded 
$\bL$-algebras. In particular, there is a natural ring homomorphism
$CH^*(BG) \to MU^{2*}(BG) {\widehat{\otimes}}_{\bL} \Z$ which factors the
cycle class map $CH^*(BG) \to H^{2*}(BG, \Z)$.
\end{cor}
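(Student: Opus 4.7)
The plan is to specialize the constructions in Propositions~\ref{prop:CBCH} and~\ref{prop:Alg-Comp} to $X = \Spec(k)$ equipped with the trivial $G$-action. Since $\Spec(k) \in \sV^S_G$, both propositions apply, and the mixed spaces $\Spec(k) \stackrel{G}{\times} U_j$ reduce to the finite-dimensional approximations $(BG)_j = U_j/G$ of the classifying space.

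First I would invoke Proposition~\ref{prop:Alg-Comp} at $X = \Spec(k)$. For a compatible sequence $\{(V_j, U_j)\}$ of good pairs as in Theorem~\ref{thm:NO-Niveu}, the universality of the Levine--Morel cobordism ({\sl cf.} \eqref{eqn:A-C-C}) yields compatible $\bL$-algebra maps $\Omega^i((BG)_j) \to MU^{2i}((BG)_j(\C))$. Taking the inverse limit produces
\[
\rho^G_k : \Omega^*(BG) = \varprojlim_j \Omega^*((BG)_j) \to \varprojlim_j MU^{2*}((BG)_j(\C)),
\]
and one identifies the right-hand side with $MU^{2*}(BG)$ using the Milnor short exact sequence~\eqref{eqn:Alg-Comp2} (which reduces to vanishing of the relevant $\varprojlim^1$ term, the point to which the proof of Proposition~\ref{prop:Alg-Comp} is devoted). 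This gives the first assertion of the corollary.

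Next I would deduce the ring homomorphism on Chow groups. Proposition~\ref{prop:CBCH} applied to $X = \Spec(k)$ yields a canonical isomorphism $\Omega^*(BG) \widehat{\otimes}_{\bL} \Z \xrightarrow{\cong} CH^*(BG)$ of graded $\bL$-modules, which is a ring isomorphism by Corollary~\ref{cor:CCH*}. Applying the functor $-\widehat{\otimes}_{\bL} \Z$ to $\rho^G_k$ and precomposing with the inverse of this isomorphism produces the desired map
\[
CH^*(BG) \xrightarrow{\cong} \Omega^*(BG) \widehat{\otimes}_{\bL} \Z \to MU^{2*}(BG) \widehat{\otimes}_{\bL} \Z.
\]

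Finally, to verify that this composite factors the topological cycle class map $CH^*(BG) \to H^{2*}(BG, \Z)$, I would argue level by level. At each $(BG)_j$ the classical non-equivariant factorization $CH^*((BG)_j) \to MU^{2*}((BG)_j(\C)) \otimes_{\bL} \Z \to H^{2*}((BG)_j(\C), \Z)$ of the topological cycle class map, coming from \eqref{eqn:Cob*-Chow} and the natural transformation to complex cobordism, is compatible with the restriction maps in $j$. Passing to the inverse limit and invoking Proposition~\ref{prop:CBCH} to identify $\varprojlim_j CH^*((BG)_j)$ with $CH^*(BG)$ delivers the required factorization. The hardest step will be controlling the interaction of the topological tensor product $\widehat{\otimes}_{\bL} \Z$ with the inverse systems, specifically verifying that the limit of the level-wise cycle class maps recovers the intended map $CH^*(BG) \to H^{2*}(BG, \Z)$; this is essentially the same kind of good-pair and $\varprojlim$ bookkeeping as in Proposition~\ref{prop:CBCH}.
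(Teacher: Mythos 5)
Your plan to specialize $X = \Spec(k)$ and pass to the inverse limit is the right skeleton, but there is a genuine gap at the very first step. You write ``I would invoke Proposition~\ref{prop:Alg-Comp} at $X = \Spec(k)$,'' but Proposition~\ref{prop:Alg-Comp} carries the hypothesis that $H^*_G(X,\Z)$ is torsion-free, and for $X = \Spec(k)$ this becomes the demand that $H^*(BG,\Z)$ be torsion-free. That fails for many linear algebraic groups $G$ (e.g.\ any finite group with torsion cohomology, $PGL_n$, $SO_{2n}$, the exceptional groups, \ldots), so the proposition cannot be cited as is. The torsion-freeness in the proposition is used precisely to kill the $\varprojlim^1$ term in the Milnor sequence~\eqref{eqn:Alg-Comp2} via \cite[Corollary~1]{Landweber}; once it is dropped, you need a different justification for that vanishing.

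The paper sidesteps this by not invoking Proposition~\ref{prop:Alg-Comp} for the corollary. Instead it repeats the construction of~\eqref{eqn:Alg-Comp1} and~\eqref{eqn:Alg-Comp2} directly and then appeals to \cite[Theorem~1]{Landweber} (rather than Corollary~1), together with the fact that $BG$ is homotopy equivalent to the classifying space of a maximal compact subgroup of $G(\C)$, to get $\varprojlim^1 MU^*((BG)_j) = 0$ unconditionally for $BG$. That is the missing ingredient in your write-up; with it in place, the first assertion of the corollary follows. Your treatment of the second assertion — identifying $CH^*(BG)$ with $\Omega^*(BG)\,\widehat{\otimes}_{\bL}\Z$ via Proposition~\ref{prop:CBCH}, applying $-\widehat{\otimes}_{\bL}\Z$ to $\rho^G_k$, and checking the factorization through $H^{2*}(BG,\Z)$ level-by-level — is consistent with, and somewhat more explicit than, what the paper records.
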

\begin{proof}
The first assertion follows immediately from ~\eqref{eqn:Alg-Comp1}, 
~\eqref{eqn:Alg-Comp2} and \cite[Theorem~1]{Landweber} using the fact that 
$BG$ is homotopy equivalent to the classifying space of its maximal compact 
subgroup. The second assertion follows from the first and 
Proposition~\ref{prop:CBCH} using the identification $\bL \xrightarrow{\cong}
MU^*$. 
\end{proof} 
\begin{remk}\label{remk:TOT}
The map $CH^*(BG) \to MU^{2*}(BG) {\widehat{\otimes}}_{\bL} \Z$ has also been
constructed by Totaro \cite{Totaro1} by a more direct method.
\end{remk}
We shall study the above realization maps in more detail in the next section.

\section{Reduction of arbitrary groups to tori}\label{section:tori}
The main result of this section is to show that with the rational coefficients,
the equivariant cobordism of schemes with an action of a connected linear
algebraic group can be written in terms of the Weyl group invariants of the
equivariant cobordism for the action of the maximal torus. This reduces
the problems about the equivariant cobordism to the case where the underlying
group is a torus. We draw some consequences of this for the cycle class
map from the rational Chow groups to the complex cobordism groups of 
classifying spaces. We first prove some reduction results about the equivariant
cobordism which reflect the relations between the $G$-equivariant cobordism
and the equivariant cobordism for actions of subgroups of $G$. The results 
of this section are used in \cite{Krishna4} and \cite{Krishna3} to compute 
the non-equivariant cobordism ring of flag varieties and flag bundles.
\\
\\
\noindent
{\bf Notation:} All results in this section will be proven with
the rational coefficients. In order to simplify our notations, an abelian
group $A$ from now on will actually mean the $\Q$-vector space 
$A \otimes_{\Z} \Q$, and an inverse limit of abelian groups will mean the
limit of the associated $\Q$-vector spaces. In particular, all cohomology groups
will be considered with the rational coefficients and $\Omega^G_i(X)$ will
mean 
\[
\Omega^G_i(X) : = {\underset{j}\varprojlim} \ \left({\Omega^G_i(X)}_j 
\otimes_{\Z} \Q\right). 
\]
Notice that this is same as $\Omega^G_i(X) {\widehat{\otimes}}_{\Z} \Q$ in our
earlier notation.

\begin{prop}\label{prop:red-torus}
Let $G$ be a connected and reductive group over $k$. Let $B$ be a Borel 
subgroup of $G$ containing a maximal torus $T$ over $k$. Then for any
$X \in \sV_G$, the restriction map
\begin{equation}\label{eqn:Borel2}
\Omega^B_*\left(X\right) \xrightarrow{r^B_{T,X}} \Omega^T_*\left(X\right),
\end{equation}
is an isomorphism.
\end{prop}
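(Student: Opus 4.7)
Since $G$ is connected reductive, $B = T \ltimes U$, where $U$ is the unipotent radical, a connected split unipotent $k$-group of dimension $n = \dim B - \dim T$. Fix any $l$-dimensional good pair $(V_j, U_j)$ for the $B$-action. Since $T \subset B$ acts freely on $U_j$ and the fibration $U_j/T \to U_j/B$ has affine fibers $B/T \cong \A^n$, the quotient $U_j/T$ is quasi-projective and $(V_j,U_j)$ is simultaneously a good pair for $T$. Setting $X^B_j = X \stackrel{B}{\times} U_j$ and $X^T_j = X \stackrel{T}{\times} U_j$, the natural morphism $\pi_j : X^T_j \to X^B_j$ is the quotient by the free left $U$-action on $X^T_j$ induced by $U \hookrightarrow B$, hence is a $U$-torsor.

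In characteristic zero $U$ admits a composition series $U = U_0 \supset U_1 \supset \cdots \supset U_n = 1$ with each quotient $U_i/U_{i+1} \cong \G_a$, which exhibits $U$ as a special group in the sense of Serre-Grothendieck; consequently $\pi_j$ is Zariski-locally trivial with fiber $\A^n$. The key lemma I would establish is an extension of Lemma~\ref{lem:Niv-Hi} to this setting: for every Zariski-locally trivial $\A^n$-bundle $\pi : E \to Y$ between quasi-projective $k$-schemes, pullback gives an isomorphism
\[
\pi^* : F_p \Omega_*(Y) \xrightarrow{\cong} F_{p+n} \Omega_{*+n}(E), \qquad p \in \Z.
\]
Via the composition series this reduces to the case $n=1$ of a $\G_a$-torsor $\pi:E \to Y$. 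Such a torsor compactifies to a $\P^1$-bundle $\bar E = \P(V) \to Y$ with $E = \bar E \setminus s(Y)$ for the section at infinity $s$, and the claim then follows from the projective bundle formula, the refined localization sequence of Theorem~\ref{thm:RLS} applied to the pair $(\bar E, s(Y))$, and the vector bundle case of Lemma~\ref{lem:Niv-Hi} applied to the tautological bundle on $\bar E$.

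Granting this lemma, $\pi_j^*$ identifies
\[
{\Omega^B_i(X)}_j = \frac{\Omega_{i+l-\dim B}(X^B_j)}{F_{\dim(X)+l-\dim B-j}\Omega_{i+l-\dim B}(X^B_j)}
\]
with the analogous quotient defining ${\Omega^T_i(X)}_j$, because the shift in degree by the relative dimension $n$ exactly absorbs the difference $\dim B - \dim T$ appearing in Definition~\ref{defn:ECob}. These isomorphisms are compatible with the transition maps of Lemma~\ref{lem:ECob2}, so passing to the inverse limit in $j$ produces an isomorphism $\Omega^B_*(X) \xrightarrow{\cong} \Omega^T_*(X)$, and a direct inspection of~\eqref{eqn:res} shows this limiting map is precisely the restriction $r^B_{T,X}$.

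The main obstacle is the homotopy invariance lemma for Zariski-locally trivial affine-space bundles, since Levine-Morel only axiomatizes homotopy invariance for vector bundles. The crucial point is that Theorem~\ref{thm:RLS} lets one preserve the niveau filtration when passing through the localization sequence associated to the compactification $\bar E$ of a $\G_a$-torsor; this is exactly the kind of application for which the refined localization sequence of Section~\ref{section:NIV} was developed. Once this filtered homotopy invariance is in hand, everything else is formal manipulation of the Morita-type quotients $X^T_j \to X^B_j$.
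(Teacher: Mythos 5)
Your overall strategy differs from the paper's in an interesting way: you try to establish the isomorphism directly at the level of the mixed quotients $X^T_j \to X^B_j$, whereas the paper first invokes the Morita isomorphism $\Omega^B_*(B \stackrel{T}{\times} X) \cong \Omega^T_*(X)$ (Proposition~\ref{prop:Morita}) to reduce the claim to showing $\Omega^B_*(B \stackrel{T}{\times} X) \cong \Omega^B_*(X)$, then applies the characteristic filtration $B^u = U_0 \supset U_1 \supset \cdots \supset U_n = 1$ of the unipotent radical to write $B/T \times X$ as an iterated torsor under vector bundles over $X$, and finally invokes the $B$-equivariant homotopy invariance of Theorem~\ref{thm:Basic}. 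By working $B$-equivariantly throughout, the paper never has to make the niveau filtration explicit at this stage; the filtered statement is already built into Theorem~\ref{thm:Basic}(ii) via Lemma~\ref{lem:Niv-Hi}. Your version would, if completed, give an honestly non-equivariant proof, at the cost of proving a new filtered homotopy invariance statement for affine bundles.

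There are, however, two concrete gaps. First, the assertion that $\pi_j\colon X^T_j \to X^B_j$ is the quotient by a free $U$-action on $X^T_j$ is wrong: $T$ is not normal in $B$, so the $U$-action on $X \times U_j$ does \emph{not} descend to $(X \times U_j)/T$, and $\pi_j$ is not a $U$-torsor. What is true is that $\pi_j$ is the $B/T$-fibration associated to the (Zariski-locally trivial, since $B$ is special) $B$-torsor $X \times U_j \to X^B_j$, and hence factors through the tower of mixed quotients $X^{TU_i}_j$ coming from the SGA3 filtration, with each step a torsor under (the vector bundle associated to) the vector group $U_{i-1}/U_i$. That repairs the geometric input, but only after importing precisely the filtration that the paper uses. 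Second, the filtered homotopy invariance lemma you posit (that $\pi^*$ identifies $F_p\Omega_*(Y)$ with $F_{p+n}\Omega_{*+n}(E)$ for a Zariski-locally trivial $\A^n$-bundle) is not in the paper and your sketch via $\P^1$-compactification is incomplete — it would need a compatible projective-bundle formula for the niveau filtration, which is itself a nontrivial auxiliary result. A cleaner route to this lemma would be to observe that the proof of Lemma~\ref{lem:Niv-Hi} uses only that $f$ is smooth of relative dimension $r$, that $f^*$ is an isomorphism (which for vector-bundle torsors is Levine--Morel's extended homotopy invariance), and the generalized degree formula; it therefore carries over verbatim to torsors under vector bundles. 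Until one of these repairs is carried out, the proposal has a genuine hole exactly at the point you yourself flag as ``the main obstacle.''
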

\begin{proof}
By Proposition~\ref{prop:Morita}, we only need to show that
\begin{equation}\label{eqn:Borel1}
\Omega_*^B\left(B \stackrel{T}{\times} X\right) \cong
\Omega_*^B\left(X\right).
\end{equation}
By \cite[XXII, 5.9.5]{SGA3}, there exists a characteristic
filtration $B^u = U_0 \supseteq U_1 \supseteq \cdots \supseteq U_n =
\{1\}$ of the unipotent radical $B^u$ of $B$ such that ${U_{i-1}}/{U_i}$
is a vector group, each $U_i$ is normal in $B$ and $TU_i = T \ltimes U_i$. 
Moreover, this filtration also implies that for each $i$, the natural map
$B/{BU_i} \to B/{TU_{i-1}}$ is a torsor under the vector bundle
${U_{i-1}}/{U_i} \times B/{TU_{i-1}}$ on $B/{TU_{i-1}}$. Hence, the 
homotopy invariance ({\sl cf.} Theorem~\ref{thm:Basic}) gives an isomorphism
\[
\Omega_*^B\left(B/{TU_{i-1}} \times X\right) \xrightarrow{\cong}
\Omega_*^B\left(B/{TU_i} \times X\right).
\]
Composing these isomorphisms successively for $i = 1, \cdots ,n$, we get
\[
\Omega_*^B\left(X\right) \xrightarrow{\cong}
\Omega_*^B\left(B/T \times X\right).
\]
The canonical isomorphism of $B$-varieties $B \stackrel{T}{\times} X \cong
B/T \times X$ ({\sl cf.} Proposition~\ref{prop:Morita}) 
now proves ~\eqref{eqn:Borel1} and hence ~\eqref{eqn:Borel2}.
\end{proof}
\begin{prop}\label{prop:NRL}
Let $H$ be a possibly non-reductive group over $k$.
Let $H = L \ltimes H^u$ be the Levi decomposition of $H$ 
(which exists since $k$ is of characteristic zero).
Then the restriction map 
\begin{equation}\label{eqn:NRL0}
{\Omega^H_*\left(X\right)} \xrightarrow{r^H_{L,X}} 
{\Omega^L_*\left(X\right)}
\end{equation}
is an isomorphism.
\end{prop}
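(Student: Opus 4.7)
The plan is to adapt the argument of \propref{prop:red-torus} verbatim, with the non-reductive group $H$ and its Levi subgroup $L$ playing the roles of the Borel $B$ and the maximal torus $T$ respectively. By the Morita isomorphism (\propref{prop:Morita}), it suffices to produce a canonical isomorphism
\[
\Omega^H_*(X) \xrightarrow{\cong} \Omega^H_*\left(H \stackrel{L}{\times} X\right),
\]
since the right-hand side is identified with $\Omega^L_*(X)$, and since $H \stackrel{L}{\times} X \cong H/L \times X$ as $H$-varieties (with $H$ acting diagonally, left translation on the first factor).

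First I would invoke \cite[XXII, 5.9.5]{SGA3} to produce a characteristic filtration $H^u = U_0 \supseteq U_1 \supseteq \cdots \supseteq U_n = \{1\}$ of the unipotent radical by closed subgroups, each normal in $H$, with successive quotients $U_{i-1}/U_i$ vector groups and with each $LU_i$ a semidirect product $L \ltimes U_i$ (hence a closed subgroup of $H$). Then $LU_0 = H$ and $LU_n = L$, so the chain of quotients $H/(LU_i)$ interpolates between a single point and $H/L$.

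Next, for each $i$ the natural $H$-equivariant projection
\[
H/(LU_i) \times X \longrightarrow H/(LU_{i-1}) \times X
\]
is a torsor under the pulled-back vector bundle $(U_{i-1}/U_i) \times H/(LU_{i-1}) \times X$ on the target; hence the homotopy invariance clause of \thmref{thm:Basic} makes the induced pull-back into an isomorphism of equivariant cobordism groups. Composing these isomorphisms for $i = 1, \ldots, n$ yields
\[
\Omega^H_*(X) = \Omega^H_*(H/(LU_0) \times X) \xrightarrow{\cong} \Omega^H_*(H/(LU_n) \times X) = \Omega^H_*(H/L \times X),
\]
and combining with the canonical identification $H \stackrel{L}{\times} X \cong H/L \times X$ and the Morita isomorphism yields the desired isomorphism.

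The main obstacle is verifying that the composite isomorphism just constructed actually coincides with the restriction map $r^H_{L,X}$ of \eqref{eqn:res}, rather than merely producing some abstract isomorphism between the same abutments. As in the proof of \propref{prop:red-torus}, $r^H_{L,X}$ is the smooth pull-back along the $H/L$-fibration $X \stackrel{L}{\times} U_j \to X \stackrel{H}{\times} U_j$; passing to $H$-equivariant mixed quotients with $U_j$, the filtration of $H^u$ induces an iterated affine-bundle factorization of this same smooth morphism (since $H^u$ is a successive extension of vector groups and is itself an affine space as a variety), and naturality of smooth pull-back with respect to composition identifies this factorization with the chain of homotopy-invariance isomorphisms above.
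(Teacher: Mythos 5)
Your proposal matches the paper's proof, which simply states that the argument of Proposition~\ref{prop:red-torus} carries over verbatim with $B$ and $T$ replaced by $H$ and $L$ (using that $H^u$ is split in characteristic zero). Your added remark identifying the composite of homotopy-invariance isomorphisms with the restriction map $r^H_{L,X}$ is a reasonable clarification that the paper leaves implicit, but the approach is the same.
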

\begin{proof}
Since the ground field is of characteristic zero, the
unipotent radical $H^u$ of $H$ is split over $k$. Now the proof is exactly
same as the proof of Proposition~\ref{prop:red-torus}, where we just have to 
replace $B$ and $T$ by $H$ and $L$ respectively.
\end{proof}

\subsection{The motivic cobordism theory}
Before we prove our main results of this section, we recall the theory
of motivic algebraic cobordism $MGL_{*,*}$ introduced by Voevodsky in
\cite{Voevodsky}. This is a bi-graded ring cohomology theory in the category
of smooth schemes over $k$. Levine has recently shown in \cite{Levine1}
that $MGL_{*,*}$ extends uniquely to a bi-graded oriented Borel-Moore homology
theory $MGL'_{*,*}$ on the category of all schemes over $k$. This homology
theory has exterior products, homotopy invariance, localization exact sequence 
and Mayer-Vietoris among other properties ({\sl cf.} [{\sl loc. cit.}, 
Section~3]). Moreover, the universality of Levine-Morel cobordism theory 
implies that there is a unique map 
\[
\vartheta : \Omega_* \to MGL'_{2*, *}
\]
of oriented Borel-Moore homology theories. Our motivation for studying the
motivic cobordism theory in this text comes from the following result of
Levine.
\begin{thm}[\cite{Levine2}]\label{thm:LCOMP}
For any $X \in \sV_k$, the map $\vartheta_X$ is an isomorphism.
\end{thm}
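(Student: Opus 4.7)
The plan is to reduce to the smooth case via resolution of singularities (available since $k$ has characteristic zero), then invoke Levine's earlier comparison between $MGL$ and $\Omega_*$ on smooth schemes, and transfer that agreement to arbitrary $X \in \sV_k$ by a descent/localization argument.

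First I would handle the smooth case. For $X$ smooth, both $\Omega_*(X)$ and $MGL'_{2*,*}(X)$ fit into the framework of oriented Borel-Moore homology theories of Levine-Morel, and $\vartheta_X$ realizes the canonical map given by the universality in Theorem~\ref{thm:Levine-M}. I would invoke Levine's earlier comparison between the geometric theory $\Omega_*$ and the motivic spectrum $MGL$ on smooth schemes to close this base case, combined with the Levine-Pandharipande double-point presentation (Theorem~\ref{thm:Levine-P}) to verify the map is compatible on generators and relations.

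For singular $X$, my approach would be induction on $d = \dim X$. Choose a closed subscheme $Z \subset X$ containing the singular locus with $\dim Z < d$ and smooth open complement $U = X \setminus Z$. On the Levine-Morel side we have the right-exact sequence of Theorem~\ref{thm:Levine-M}
\[
\Omega_*(Z) \to \Omega_*(X) \to \Omega_*(U) \to 0,
\]
while $MGL'$ enjoys a fully long-exact localization sequence. Using $\vartheta_Z$ (by the inductive hypothesis) and $\vartheta_U$ (by the smooth case), a four-lemma chase yields surjectivity of $\vartheta_X$. For injectivity, I would choose a resolution $\pi : \wt{X} \to X$ with $\wt{X}$ smooth and $\pi$ an isomorphism over $U$, let $E = \pi^{-1}(Z)_{\red}$, and exploit the resulting abstract blow-up square. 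The double-point relation defining $\Omega_*$ and the blow-up excision built into $MGL'$ together show that any kernel class in $\Omega_*(X)$ lifts through $\Omega_*(Z) \oplus \Omega_*(\wt{X})$ and is killed by induction applied to $Z$ and $E$.

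The main obstacle is precisely the asymmetry between the right-exact localization sequence available for $\Omega_*$ and the full long-exact sequence for $MGL'$. The conceptually cleanest way to dispose of this gap is to promote the entire argument to a descent statement: one shows that both $\Omega_*$ and $MGL'_{2*,*}$ satisfy cdh descent on $\sV_k$ (leaning heavily on characteristic-zero resolution of singularities and on the blow-up formula for $\Omega_*$), after which agreement on the smooth subsite $\sV^S_k$ forces agreement on all of $\sV_k$. Executing this descent argument, and in particular verifying the abstract blow-up compatibility for $\Omega_*$ compatibly with the bigrading on $MGL'$, is the delicate technical core of the proof.
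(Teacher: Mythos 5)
The paper does not prove this statement at all; it is imported verbatim from \cite{Levine2}, and none of the machinery in the present article (niveau filtrations, equivariant constructions, etc.) bears on it. Your proposal therefore has to be measured against Levine's actual argument, and there it runs into two substantive problems.

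First, the base case is circular. You write that you would ``invoke Levine's earlier comparison between $\Omega_*$ and the motivic spectrum $MGL$ on smooth schemes,'' but no such prior theorem exists to be invoked: the agreement of $\Omega^*(X)$ and $MGL^{2*,*}(X)$ for smooth $X$ \emph{is} the content of Levine's theorem restricted to the smooth subcategory, and it is already nontrivial for $X=\Spec k$. Establishing $\Omega^*(k)\cong \bL \cong MGL^{2*,*}(k)$ is the genuine analytic input, and it requires the Hopkins--Morel(--Hoyois) computation of the slices of $MGL$ (equivalently, the convergence and degeneration behaviour of the slice/Hopkins--Morel spectral sequence, exactly the sequence written as ~\eqref{eqn:FG-action1} in the paper). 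The Levine--Pandharipande presentation tells you nothing about $MGL'$ and cannot substitute for this. Second, the proposed reduction from singular to smooth schemes leans on cdh descent for $\Omega_*$ (``one shows that both $\Omega_*$ and $MGL'_{2*,*}$ satisfy cdh descent''), but cdh descent for Levine--Morel cobordism is not an available black box --- even the abstract-blow-up sequence for $\Omega_*$ is a delicate theorem, and full cdh descent for $\Omega_*$ was not known at the time of \cite{Levine2} and is not how Levine argues. His route is instead through the localization sequence, resolution of singularities, and the generalized degree formula (surjectivity of $\vartheta_X$), combined with a comparison of the associated graded of the niveau filtration on $\Omega_*$ with the slice filtration on $MGL'$ via their common identification with (higher) Chow groups (injectivity). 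Your four-lemma sketch for surjectivity is in the right spirit, but the injectivity step as you describe it has no mechanism to control $\ker(\Omega_*(Z)\to\Omega_*(X))$, since the $\Omega_*$-localization sequence has no left end; the slice-filtration comparison is precisely what supplies this control in Levine's proof.
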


We draw some simple consequences of working with the rational 
coefficients.
We recall from \cite{Levine1} that for a smooth $k$-variety $X$, there
is a Hopkins-Morel spectral sequence 
\begin{equation}\label{eqn:FG-action1}
E^{p,q}_2(n) = CH^{n-q}(X, 2n-p-q) \otimes \bL^q \Rightarrow MGL^{p+q,n}(X)
\end{equation}
which is the algebraic analogue of the Atiyah-Hirzebruch spectral sequence
in complex cobordism.

If $X$ is possibly singular, we embed it as a closed subscheme of a smooth
scheme $M$. Then, the functoriality of the above spectral sequence with
respect to an open immersion yields a spectral sequence
\[
E^{p,q}_2(n) = CH^{n-q}_X(M, 2n-p-q) \otimes \bL^q \Rightarrow MGL^{p+q,n}_X(M)
\]
of cohomology with support. Since the higher Chow groups and the motivic
cobordism groups of $M$ with support in $X$ are canonically isomorphic to the
higher Chow groups and the Borel-Moore motivic cobordism groups of $X$
({\sl cf.} \cite{Bloch}, \cite[Section~3]{Levine2}), the above spectral
sequence is identified with 
\begin{equation}\label{eqn:FG-action1S}
E_{p,q}^2(n) = CH_{n}(X, p) \otimes \bL^q \Rightarrow MGL'_{2n+p+q,n+q}(X).
\end{equation}

Now, suppose that a finite group $G$ acts on $X$. By embedding $X$
equivariantly in a smooth $G$-scheme $M$, the formula
\[
MGL'_{p,q}(X) : = \Hom_{{\sS}{\sH}(k)}\left({\Sigma}^{\infty}_TM/{(M-X)}, 
{\Sigma}^{p', q'} MGL\right)
\]
(where $p' = 2{\rm dim}(M)-p, q' = {\rm dim}(M)-q$)
shows that $G$ acts naturally
on $MGL'_{p,q}(X)$. It also acts on the higher Chow groups $CH_p(X,q)$ likewise.

We also observe that $G$ in fact acts on the cycle
complex $\sZ_j(X, \cdot)$ by acting trivially on $\Delta^{\bullet}$. This action
is given by 
\[
(g, \sigma) \mapsto \mu^*_g(\sigma)
\]
where $\mu_g$ is the automorphism of $X \times \Delta^i$ 
associated with $g \in G$ and $\sigma$ is an
irreducible admissible cycle on $X \times \Delta^i$. This action extends
linearly to all $\sigma \in \sZ_j(X, i)$. Since $G$ acts trivially on
$\Delta^{\bullet}$, it preserves the boundary map of the complex
$\sZ_j(X, \cdot)$. In particular, $G$ acts on the cycle complex of $X$. If 
we let ${\sZ_j(X, \cdot)}^G$ denote the subcomplex of the invariant cycles, 
then the exactness of the functor ``$G$-invariants'' on the category
of $\Q[G]$-modules implies that 
\begin{equation}\label{eqn:EXACTM}
H_i\left({\sZ_j(X, \cdot)}^G\right) \xrightarrow{\cong}
{CH_j(X, i)}^G.
\end{equation} 

Moreover, as the $E^1$-terms of the spectral sequence ~\eqref{eqn:FG-action1S}
are given by $E^1_{p,q} = \sZ_{n}(X, p) \otimes \bL^q$ with differential
$d^1 : E^1_{p,q} \to E^1_{p-1,q}$, we see that the $E^1$ is just the cycle complex
of $X$ and hence it is a complex with $G$-action as described above.
In other words, the $E^1$ terms are complexes of $\Q[G]$-modules.
We conclude that the spectral sequence ~\eqref{eqn:FG-action1S} is equipped
with a natural $G$-action. Since we are dealing with cohomology with
rational coefficients, the exactness of the functor of taking $G$-invariants
on the category of $\Q[G]$-modules implies that the above spectral
sequence descends to the spectral sequence of `$G$-invariants'
\begin{equation}\label{eqn:FG-action1S1}
{E'}_{p,q}^2(n) = \left(CH_{n}(X, p)\right)^G \otimes \bL^q \Rightarrow 
\left(MGL'_{2n+p+q,n+q}(X)\right)^G.
\end{equation}       
As a consequence, we get the following.
\begin{lem}\label{lem:G-INV*C}
Let $G$ be a finite group acting freely on a $k$-scheme $X$ with quotient
$Y$. There is an isomorphism
\[
MGL'_{p,q}(Y) \xrightarrow{\cong} \left(MGL'_{p,q}(X)\right)^G.
\]
\end{lem}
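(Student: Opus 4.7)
The plan is to compare the Hopkins-Morel spectral sequence \eqref{eqn:FG-action1S} applied to $Y$ with the $G$-invariant spectral sequence \eqref{eqn:FG-action1S1} applied to $X$, both of which have already been constructed in the excerpt. Since $G$ acts freely on $X$ with quotient $\pi : X \to Y$, the projection $\pi$ is finite \'etale of degree $|G|$, so it carries a pullback $\pi^*$ and a pushforward $\pi_*$ on both the cycle complexes and $MGL'_{*,*}$, satisfying $\pi_* \pi^* = |G|\cdot \id$ and $\pi^*\pi_* = \sum_{g \in G} g^*$.

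The first step is to identify $\sZ_j(Y,\cdot)$ with $\sZ_j(X,\cdot)^G$ at the level of cycle complexes (with $\Q$-coefficients). The map $\pi^*$ visibly lands in the $G$-invariant subcomplex, and since every $G$-invariant irreducible admissible cycle on $X \times \Delta^i$ is the scheme-theoretic preimage of a unique irreducible admissible cycle on $Y \times \Delta^i$ (because $\pi \times \id$ is finite \'etale, hence flat), the operator $\tfrac{1}{|G|}\pi_*$ provides a two-sided inverse to $\pi^*$. Passing to homology and invoking \eqref{eqn:EXACTM}, this yields an isomorphism $CH_j(Y,i) \xrightarrow{\cong} \bigl(CH_j(X,i)\bigr)^G$ for all $i,j$.

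Tensoring with $\bL^q$, one obtains an isomorphism of $E^2$-pages
\[
E^2_{p,q}(Y, n) = CH_n(Y,p)\otimes \bL^q \xrightarrow{\pi^*,\;\cong} \bigl(CH_n(X,p)\bigr)^G \otimes \bL^q = {E'}^2_{p,q}(X,n),
\]
and by functoriality this is induced by a morphism of spectral sequences. Since both spectral sequences converge to $MGL'_{2n+p+q,n+q}(Y)$ and $\bigl(MGL'_{2n+p+q,n+q}(X)\bigr)^G$ respectively, and the comparison map is compatible with the induced filtrations, the $E^2$-isomorphism propagates to an isomorphism on the abutments, which is the statement of the lemma.

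The one point that requires some care -- and which I expect to be the main obstacle -- is the passage from the $E^2$-isomorphism to the abutment isomorphism: one must ensure that the Hopkins-Morel spectral sequence converges strongly in the relevant bidegrees and that the $G$-action preserves the filtration on $MGL'_{*,*}(X)$ in such a way that taking $G$-invariants of the filtered abutment (which is exact on $\Q[G]$-modules) reproduces the filtration of the invariant spectral sequence. This is essentially built into the construction of \eqref{eqn:FG-action1S1} given in the excerpt, so it should amount to a formal verification using Levine's results in \cite{Levine1}.
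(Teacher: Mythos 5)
Your proposal follows the paper's own strategy exactly: the paper proves the lemma by comparing the Hopkins--Morel spectral sequence \eqref{eqn:FG-action1S} for $Y$ with the invariant spectral sequence \eqref{eqn:FG-action1S1} for $X$, reducing to the higher Chow group isomorphism $CH_n(Y,p)\cong\left(CH_n(X,p)\right)^G$. The only difference is that the paper cites \cite[Corollary~8.3]{Krishna} for that descent of higher Chow groups along the finite free quotient, whereas you reprove it directly via the $\pi^*$, $\tfrac{1}{|G|}\pi_*$ averaging on cycle complexes --- which is indeed the standard argument, and your stated identities $\pi_*\pi^*=|G|\cdot\id$ and $\pi^*\pi_*=\sum_g g^*$ alone already give the two-sided inverse on invariants without the intermediate discussion of $G$-invariant irreducible cycles.
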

\begin{proof}
This follows immediately from the spectral sequences ~\eqref{eqn:FG-action1S}
and ~\eqref{eqn:FG-action1S1}, combined with \cite[Corollary~8.3]{Krishna}.
\end{proof}

Recall that a connected and reductive group $G$ over $k$ is said to be
{\sl split}, if it contains a split maximal torus $T$ over $k$ such that
$G$ is given by a root datum relative to $T$. One knows that every
connected and reductive group containing a split maximal torus is
split ({\sl cf.} \cite[Chapter~XXII, Proposition~2.1]{SGA3}).  
In such a case, the normalizer $N$ of $T$ in $G$ and all its connected
components are defined over $k$ and the quotient $N/T$ is the Weyl group
$W$ of the corresponding root datum.

\begin{lem}\label{lem:FG-action-S}
Let $G$ be a connected reductive group and let $T$ be a split maximal torus
in $G$. Put $H= G/N$, where $N$ is the normalizer of $T$ in $G$. Then, any
\'etale locally trivial $H$-fibration $f: X \to Y$ over a smooth variety $Y$
induces an isomorphism
$f^*: MGL^{*,*}(Y) \xrightarrow{\cong} MGL^{*,*}(X)$.
\end{lem}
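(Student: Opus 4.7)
The plan is to reduce the statement via the Hopkins--Morel spectral sequence~\eqref{eqn:FG-action1} to the analogous statement for higher Chow groups, and then to treat that case using the $W$-Galois cover $G/T\to G/N$ and the cellular structure of $G/T$. Since $f$ is \'etale locally trivial with $Y$ smooth, $X$ is smooth as well, so~\eqref{eqn:FG-action1} applies to both. The map $f^{*}$ is functorial for this spectral sequence and induces a morphism of convergent spectral sequences whose $E_{2}$-pages are built from $\CH^{n-q}(-,2n-p-q)\otimes \bL^{q}$. It therefore suffices to show that $f^{*}\colon \CH^{*}(Y,q)_{\Q}\to \CH^{*}(X,q)_{\Q}$ is an isomorphism for every $q\ge 0$.

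Next, I would form the $W$-Galois cover $\pi_{X}\colon \widetilde X\to X$ obtained by gluing the base changes of $G/T\to G/N$ through the local trivializations of $f$; this produces an \'etale locally trivial $G/T$-fibration $\widetilde f=f\circ\pi_{X}\colon \widetilde X\to Y$. By~\eqref{eqn:EXACTM} applied to higher Chow groups, together with the exactness of the $W$-invariants functor on $\Q[W]$-modules, one has $\CH^{*}(X,q)_{\Q}=\bigl(\CH^{*}(\widetilde X,q)_{\Q}\bigr)^{W}$, so it suffices to exhibit $\widetilde f^{*}$ as an isomorphism of $\CH^{*}(Y,q)_{\Q}$ onto this space of invariants.

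Because $G/T\to G/B$ is a torsor under the unipotent group $B^{u}$, hence an affine bundle, homotopy invariance identifies $\CH^{*}(G/T)$ with $\CH^{*}(G/B)$, and the Bialynicki-Birula decomposition of $G/B$ exhibits $\widetilde f$ as a relative cellular fibration. Standard long-exact sequences for the cell filtration then give a $W$-equivariant K\"unneth-type isomorphism $\CH^{*}(\widetilde X,q)_{\Q}\cong \CH^{*}(Y,q)_{\Q}\otimes_{\Q} \CH^{*}(G/T)_{\Q}$ in which the first factor carries the trivial $W$-action. Taking $W$-invariants and invoking Borel's theorem---which identifies $\CH^{*}(G/T)_{\Q}$ with the coinvariant algebra $S/S^{W}_{+}$, where $S$ is the symmetric algebra on the character lattice of $T$, and yields $\bigl(\CH^{*}(G/T)_{\Q}\bigr)^{W}=\Q$ concentrated in degree $0$---finishes the argument.

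The principal obstacle will be the $W$-equivariance of the cellular K\"unneth isomorphism: the $W$-action on $G/T$ does not permute the Schubert cells of $G/B$ on the nose, so one has to pass through the Borel presentation and check, at the level of the coinvariant algebra, that the only $W$-invariant class is the constant one. A secondary (but straightforward) point is the globalization of $\pi_{X}$ from local trivializations of $f$; once both are in place, everything else is formal spectral-sequence bookkeeping plus a standard application of exact functors.
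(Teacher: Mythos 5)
Your first move---running the Hopkins--Morel spectral sequence~\eqref{eqn:FG-action1} functorially to reduce to the same statement for rational higher Chow groups---is exactly the paper's argument. After that, the paper stops: it simply cites \cite[Lemma~3.7]{Krishna} for the higher Chow version of the lemma. What you do from that point on is attempt to reconstruct that cited lemma, so your proposal is really the paper's proof plus an extra layer.

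There are two issues in the reconstruction. The first, which you flag as "secondary" and "straightforward," is actually a real hypothesis: to glue the fibre-wise pullbacks of $G/T\to G/N$ into a $W$-Galois cover $\widetilde X\to X$, the transition functions of $f$ must lift from $\mathrm{Aut}(G/N)$ to $G/T$, which amounts to asking that $f$ have structure group $G$, i.e.\ that $X\cong P\times^{G}(G/N)$ for some $G$-torsor $P\to Y$. The lemma as stated does not say this (though the one application, $M_N\to M_G$ in Proposition~\ref{prop:FG-action}, certainly satisfies it, and \cite[Lemma~3.7]{Krishna} presumably carries the same implicit hypothesis). The second issue is the "relative cellular fibration" step. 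The Bruhat cells of $G/B$ are $B$-invariant but not $G$-invariant, so they do not glue across $G$-valued transition functions; there is no intrinsic cell filtration of $\widetilde f$. You identify only a milder symptom of this (that $W$ does not permute the Schubert cells), but the decomposition itself does not globalize. The correct substitute is what you gesture at with the Borel presentation: a Leray--Hirsch argument, lifting $\CH^{*}(G/T)_{\Q}\cong\CH^{*}(G/B)_{\Q}$ to $\widetilde X\cong P/T\to P/B$ by means of the Chern classes of the globally-defined line bundles $P\times^{B}\C_{\chi}$ for characters $\chi$ of $T$. Since these line bundles are permuted by $W$ compatibly with the fibre action, this gives the $W$-equivariant isomorphism $\CH^{*}(\widetilde X,q)_{\Q}\cong\CH^{*}(Y,q)_{\Q}\otimes_{\Q}\CH^{*}(G/T)_{\Q}$ you want, and then Chevalley's theorem (the coinvariant algebra of $W$ is the regular representation, hence its invariants are $\Q$ in degree $0$) closes the argument. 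With that replacement your outline is sound; as written, the cellular-fibration claim is the genuine gap.
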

\begin{proof}
This follows immediately from the similar result for the higher Chow groups
in \cite[Lemma~3.7]{Krishna} and the spectral sequence
~\eqref{eqn:FG-action1}.
\end{proof}

\begin{prop}\label{prop:FG-action}
Let the algebraic group $G$ be as in Lemma~\ref{lem:FG-action-S} and let 
$(V, U)$ be a good pair for the $G$-action corresponding to $j \ge 0$. Then 
for any $X \in \sV_G$, the pull-back map 
\begin{equation}\label{eqn:FGaction**}
\Omega_*\left(X \stackrel{G}{\times} U\right) \xrightarrow{f^*_X}
\Omega_*\left(X \stackrel{N}{\times} U\right)
\end{equation}
is an isomorphism.
\end{prop}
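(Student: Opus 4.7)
The plan is to deduce the proposition from Lemma~\ref{lem:FG-action-S} via Theorem~\ref{thm:LCOMP}, using Noetherian induction on $\dim X$ to handle the singular case. The map $f_X$ is the smooth pull-back along the étale locally trivial $G/N$-fibration $X \stackrel{N}{\times} U \to X \stackrel{G}{\times} U$, which has relative dimension $r = \dim G - \dim T$. Setting $Y_G := X \stackrel{G}{\times} U$ and $Y_N := X \stackrel{N}{\times} U$, I will prove the stronger statement that $f_X^* : MGL'_{p,q}(Y_G) \to MGL'_{p+2r,\,q+r}(Y_N)$ is an isomorphism for all $(p,q) \in \Z^2$; the proposition then follows by restricting to the slice $(p,q) = (2n,n)$ and invoking Theorem~\ref{thm:LCOMP}.

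When $X$ is smooth, both $Y_G$ and $Y_N$ are smooth, and Lemma~\ref{lem:FG-action-S} (applied to the fibration $f_X$ with $H = G/N$) immediately yields an isomorphism $f_X^* : MGL^{*,*}(Y_G) \xrightarrow{\cong} MGL^{*,*}(Y_N)$. Combining this with Poincar\'e duality for smooth schemes, $MGL'_{p,q}(Y) \cong MGL^{2\dim Y - p,\, \dim Y - q}(Y)$, and accounting for the dimension shift $\dim Y_N - \dim Y_G = r$, translates this into the desired $MGL'$-isomorphism.

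For singular $X$, I would do Noetherian induction on $\dim X$. Take $U_0 := X_{\mathrm{sm}}$, the smooth locus of $X$, a dense $G$-invariant open subscheme, and let $Z := X \setminus U_0$ with the reduced structure; this is a $G$-invariant closed subscheme with $\dim Z < \dim X$. Since $MGL'_{*,*}$ admits a long exact localization sequence ({\sl cf.} \cite{Levine1}), applying it to both the $G$- and $N$-mixed quotients of the pair $(X, Z, U_0)$ produces a commutative ladder of long exact sequences whose vertical arrows are given by the relevant smooth pull-backs. The vertical arrow for $U_0$ is an isomorphism by the smooth case already treated, while the vertical arrow for $Z$ is an isomorphism in every bidegree by the inductive hypothesis; the five-lemma then forces the vertical arrow for $X$ to be an isomorphism in every bidegree as well.

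The principal technical obstacle is the bookkeeping of bidegrees: matching the shift $(p,q) \mapsto (p+2r, q+r)$ governing smooth pull-back in the Borel-Moore theory $MGL'_{*,*}$ against the bidegree-preserving pull-back of the cohomological theory $MGL^{*,*}$, and verifying that $f_X^*$ commutes with the connecting homomorphisms of the localization sequence (this is standard functoriality of smooth pull-back in an oriented Borel-Moore homology theory). Passing through the full bigraded $MGL'_{*,*}$ is essential here, since the Levine--Morel localization sequence on $\Omega_*$ (Theorem~\ref{thm:Levine-M}) is only right exact and is therefore insufficient for a direct five-lemma argument at the level of $\Omega_*$.
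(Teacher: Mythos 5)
Your proof is correct, but it follows a genuinely different route from the paper's. The paper proves this without induction: it uses the standing assumption that all $G$-actions are linear to choose a $G$-equivariant closed embedding $X \hookrightarrow M$ into a \emph{smooth} $G$-scheme $M$, with $W = M \setminus X$; after passing to mixed quotients, the long exact sequence of $MGL^{*,*}$-cohomology with support in $X_G$ (resp.\ $X_N$) has on either side of the middle term the cohomology of the smooth schemes $M_G$ and $W_G$ (resp.\ $M_N$, $W_N$), so Lemma~\ref{lem:FG-action-S} applies directly to those flanking terms, and one application of the five-lemma isolates the statement for $MGL^{*,*}_{X_G}(M_G) \cong MGL'_{2*,*}(X_G) \cong \Omega_*(X_G)$ via Theorem~\ref{thm:LCOMP}. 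Your argument instead performs Noetherian induction on $\dim X$ via the stratification into $X_{\sm}$ and its closed complement $Z$, applying the $MGL'_{*,*}$-localization sequence; the open stratum is handled by Lemma~\ref{lem:FG-action-S} plus Poincar\'e duality, the closed stratum by the inductive hypothesis. What the paper's approach buys is the avoidance of induction, since a smooth ambient $G$-embedding is available by linearity; what your approach buys is that it works intrinsically in Borel--Moore $MGL'_{*,*}$-theory without ever having to choose an embedding (the paper has to observe separately that $f^*_X$ is independent of the embedding). Two small points worth tightening: the induction should be run on $X_{\red}$, since $X_{\sm}$ may be empty when $X$ is non-reduced (but $\Omega_*$ and $MGL'_{*,*}$ only see $X_{\red}$); and the compatibility of smooth pull-back with the connecting maps of the $MGL'$-localization sequence should be attributed to the fact that these sequences arise from cofiber sequences in $\SH(k)$ and smooth pull-back is induced by a map of spectra, rather than to the axioms of an oriented Borel--Moore homology theory in the Levine--Morel sense, where the localization sequence is only right exact and has no connecting map.
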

\begin{proof}
Since $G$ acts on $X$ linearly, we can find a $G$-equivariant closed 
embedding $X \inj M$, where $M \in \sV^S_G$. Let $W$ be the complement of
$X$ in $M$. Set $X_G = X \stackrel{G}{\times} U$. We have similar meaning
for $X_N$. Then $X_G$ is a closed subscheme of $M_G$ with complement
$W_G$. Moreover, $M_N \xrightarrow{f_M} M_G$ is an \'etale locally trivial 
$G/N$-fibration.
This yields a commutative diagram of long exact sequences
\begin{equation}\label{eqn:FG-ac*}
\xymatrix@C.6pc{
MGL^{*,*}(M_G) \ar[r] \ar[d]_{f^*_M} &
MGL^{*,*}(W_G) \ar[r] \ar[d]^{f^*_W} & MGL^{*,*}_{X_G}(M_G) \ar[r] \ar[d]^{f^*_X} &
MGL^{*,*}(M_G) \ar[r] \ar[d]^{f^*_M} & MGL^{*,*}(W_G) \ar[d]^{f^*_W} \\
MGL^{*,*}(M_N) \ar[r] & MGL^{*,*}(W_N) \ar[r] & MGL^{*,*}_{X_N}(M_N) \ar[r] &
MGL^{*,*}(M_N) \ar[r] & MGL^{*,*}(W_N).}
\end{equation}
Since $M_G$ and $W_G$ are smooth, the pull-back maps $f^*_M$ and $f^*_W$ are
isomorphisms by Lemma~\ref{lem:FG-action-S}. This implies in particular
that $f^*_X$ is also an isomorphism. Since 
\[
\Omega_*(X_G) \cong MGL'_{2*, *}(X_G) : = 
MGL^{2{\rm dim}(M_G)- 2*, {\rm dim}(M_G)-*}_{X_G}(M_G)
\]
by Theorem~\ref{thm:LCOMP} and since the pull-back $f^*_X : \Omega_*(X_G)
\to \Omega_*(X_N)$ is independent of the choice of the embedding,
we conclude that the map in ~\eqref{eqn:FGaction**} is an isomorphism.
\end{proof}

\begin{thm}\label{thm:W-inv}
Let $G$ be a connected linear algebraic group and let $L$ be a Levi subgroup
of $G$ with a split maximal torus $T$. Let $W$ denote the Weyl group of $L$
with respect to $T$.
Then for any $X \in \sV_G$, the natural map 
\begin{equation}\label{eqn:W-inv1}
\Omega^G_*(X) \to {\left(\Omega^T_*(X)\right)}^W
\end{equation}
is an isomorphism.
\end{thm}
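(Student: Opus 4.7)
The plan is to reduce in two stages: first from $G$ to the Levi subgroup $L$, then from $L$ to its split maximal torus $T$. Proposition~\ref{prop:NRL} applied to $G = L \ltimes G^u$ immediately gives $r^G_{L,X}: \Omega^G_*(X) \xrightarrow{\cong} \Omega^L_*(X)$, so we may assume $G = L$ is connected reductive and split, with $N \subset L$ the normalizer of $T$ and $W = N/T$ the Weyl group. What remains is to establish $\Omega^L_*(X) \xrightarrow{\cong} \left(\Omega^T_*(X)\right)^W$.

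Fix a sequence $\{(V_j, U_j)\}_{j \geq 0}$ of good pairs for the $L$-action as in Theorem~\ref{thm:NO-Niveu}; these are automatically good pairs for the $T$- and $N$-actions. Put $X_L = X \stackrel{L}{\times} U_j$, $X_N = X \stackrel{N}{\times} U_j$, and $X_T = X \stackrel{T}{\times} U_j$. The natural maps factor as $X_T \to X_N \to X_L$, where the first is a finite \'etale $W$-torsor and the second is a smooth $L/N$-fibration of relative dimension $\dim L - \dim T$. Proposition~\ref{prop:FG-action} (applied to $L$) yields an isomorphism $\Omega_*(X_L) \xrightarrow{\cong} \Omega_*(X_N)$ under pullback, with the expected shift in degree. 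On the other hand, $W$ acts freely on $X_T$ with quotient $X_N$, so Lemma~\ref{lem:G-INV*C} combined with Theorem~\ref{thm:LCOMP} gives $\Omega_*(X_N) \xrightarrow{\cong} \left(\Omega_*(X_T)\right)^W$. Composing, we obtain $\Omega_*(X_L) \xrightarrow{\cong} \left(\Omega_*(X_T)\right)^W$ for each $j$.

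To descend to equivariant cobordism, one must pass to the quotients by the appropriate niveau filtrations and then take the inverse limit over $j$. The $W$-action on $X_T$ is by $L$-equivariant (in particular projective) automorphisms, hence by Lemma~\ref{lem:Niv*} it preserves the niveau filtration. The smooth pullback along $X_N \to X_L$ shifts the filtration index by $\dim(L/N)$, while the \'etale pullback along $X_T \to X_N$ preserves it; these shifts exactly match the difference between the filtration indices $d + l - g - j$ and $d + l - t - j$ appearing in the definitions of $\Omega^L_i(X)_j$ and $\Omega^T_i(X)_j$. Since we work with rational coefficients and $|W|$ is invertible, taking $W$-invariants is exact, so it commutes with the relevant quotients and with inverse limits of Mittag-Leffler systems. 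The identification $\Omega^L_i(X)_j \cong \left(\Omega^T_i(X)_j\right)^W$ at each level therefore passes to the desired isomorphism $\Omega^L_*(X) \cong \left(\Omega^T_*(X)\right)^W$ after taking ${\underset{j}\varprojlim}$, and this composes with $r^G_{L,X}$ to give the restriction $r^G_{T,X}$ of the statement.

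The main obstacle is the bookkeeping in the third paragraph: one must verify at each level $j$ that the isomorphism $\Omega_*(X_L) \cong \left(\Omega_*(X_T)\right)^W$ produced by Proposition~\ref{prop:FG-action} and Lemma~\ref{lem:G-INV*C} respects the niveau filtration exactly in the way demanded by the defining quotients of equivariant cobordism, so that the correspondence descends cleanly to the $\Omega^L_i(X)_j$ and $\Omega^T_i(X)_j$. Once this compatibility is established, the passage to the inverse limit is automatic thanks to the exactness of $W$-invariants over $\Q[W]$.
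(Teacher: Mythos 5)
Your first two paragraphs track the paper's proof exactly: reduce to $G=L$ reductive via Proposition~\ref{prop:NRL}, then factor through $X_T \to X_N \to X_L$ and apply Proposition~\ref{prop:FG-action} and Lemma~\ref{lem:G-INV*C} (via Theorem~\ref{thm:LCOMP}) at each level $j$. The divergence, and the gap, is in your third paragraph.

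You fix a sequence of good pairs ``as in Theorem~\ref{thm:NO-Niveu}'' but then ignore the conclusion of that theorem and revert to working with the niveau-filtration quotients $\Omega^L_i(X)_j$ and $\Omega^T_i(X)_j$. That forces you to argue that the level-$j$ isomorphism $\Omega_*(X_L)\cong(\Omega_*(X_T))^W$ descends to the quotients, and here your argument does not close. Lemma~\ref{lem:Niv*} only gives one-sided containment $g^*(F_j\Omega_*(X_L))\subseteq F_{j+r}\Omega_*(X_N)$; to conclude an isomorphism on quotients you would need the pullbacks to map the filtration piece \emph{onto} the corresponding piece of the target, i.e.\ the analogue of Lemma~\ref{lem:Niv-Hi} for the $L/N$-fibration $X_N\to X_L$ and the \'etale $W$-torsor $X_T\to X_N$. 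Neither is supplied by the lemmas you cite, and you explicitly flag this as ``the main obstacle'' without resolving it. That is a genuine gap.

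The paper sidesteps all of this by actually using Theorem~\ref{thm:NO-Niveu}: for a sequence of good pairs chosen as in that theorem, $\Omega^G_i(X)\cong\varprojlim_j\Omega_{i+l_j-g}(X\stackrel{G}{\times}U_j)$ with \emph{no} niveau quotients, and similarly for $N$ and $T$. Then the compatible system of isomorphisms $\Omega_{i+l_j-g}(X^j_L)\cong\Omega_{i+l_j-n}(X^j_N)\cong(\Omega_{i+l_j-n}(X^j_T))^W$ passes directly to the inverse limit (using exactness of $W$-invariants over $\Q[W]$ to commute invariants with $\varprojlim$), and there is nothing about the niveau filtration to check. Replace your third paragraph with this observation and the proof is complete and coincides with the paper's.
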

\begin{proof}
By Proposition~\ref{prop:NRL}, we can assume that $G = L$ and hence $G$ is a 
connected reductive group with split maximal torus $T$. 

Let $N$ denote the normalizer of $T$ in $G$ so that $W = N/T$ and
$H = G/N$. Fix $i \in \Z$. We choose a sequence of
$l_j$-dimensional good pairs $\{(V_j,U_j)\}$ as in 
Theorem~\ref{thm:NO-Niveu} for the $G$-action.
Then, this is also a sequence of good pairs for the action of $N$ and $T$. 
Setting $X^j_G = X \stackrel{G}{\times} U_j$, we see that there
is an \'etale locally trivial $H$-fibration $X^j_N \to X^j_G$. Hence by 
Proposition~\ref{prop:FG-action},
the smooth pull-back $\Omega_{i+{l_j}-g}(X^j_G) \to \Omega_{i+{l_j}-n}(X^j_N)$ is an
isomorphism, where $n$ is the dimension of $N$. Taking the inverse limit and
using Theorem~\ref{thm:NO-Niveu}, we get an isomorphism
\begin{equation}\label{eqn:W-inv2*}
\Omega^G_i(X) \xrightarrow{\cong} \Omega^N_i(X).
\end{equation}

On the other hand, as $W$ acts freely on $X_T$ with quotient $X_N$, 
it follows from  Theorem~\ref{thm:LCOMP} and Lemma~\ref{lem:G-INV*C} 
that the natural map 
\begin{equation}\label{eqn:W-inv3}
\Omega_{i+{l_j}-n}(X_N) \to \left(\Omega_{i+{l_j}-n}(X_T)\right)^W
\end{equation}
is an isomorphism.
Since the action of $W$ on the inverse system 
${\left\{\Omega_{i+{l_j}-n}(X_T)\right\}}_j$ induces the similar action on the 
inverse limit and since the inverse limit commutes with taking the 
$W$-invariants, we get
\begin{equation}\label{eqn:W-inv3}
{\underset{j}\varprojlim} \ \Omega_{i+{l_j}-n}(X_N) \xrightarrow{\cong}
\left({\underset{j}\varprojlim} \ \Omega_{i+{l_j}-n}(X_N)\right)^W.
\end{equation}
Since the left and the right terms are same as $\Omega^N_i(X)$ and
$\Omega^T_i(X)$ respectively by choice of our good pairs and 
Theorem~\ref{thm:NO-Niveu}, we conclude that $\Omega^N_i(X)
\xrightarrow{\cong} \left(\Omega^T_i(X)\right)^W$. We complete the proof
of the theorem by combining this with ~\eqref{eqn:W-inv2*}.
\end{proof}

\begin{cor}\label{cor:W-split}
Let  $X \in \sV_G$ be as in Theorem~\ref{thm:W-inv}. Then the natural
map 
\begin{equation}\label{eqn:Borel3*}
{\Omega_*^G\left(X\right)} \xrightarrow{r^G_{T,X}} 
{\Omega_*^T\left(X\right)} 
\end{equation}
is a split monomorphism which is natural for the morphisms in ${\sV}_G$. 
In particular, if $H$ is
any closed subgroup of $G$, then there is a split injective map 
\begin{equation}\label{eqn:Borel3}
{\Omega_*^H\left(X\right)} \xrightarrow{r^G_{T,X}} 
{\Omega_*^T\left(G \stackrel{H}{\times} X\right)}. 
\end{equation}
\end{cor}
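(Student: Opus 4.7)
The plan is to deduce both claims directly from Theorem~\ref{thm:W-inv} combined with the fact that we work over $\Q$ and $W$ is finite. By that theorem, the restriction $r^G_{T,X}$ factors as
\[
\Omega^G_*(X) \xrightarrow{\cong} \bigl(\Omega^T_*(X)\bigr)^W \hookrightarrow \Omega^T_*(X),
\]
so splitting $r^G_{T,X}$ reduces to splitting the inclusion of the $W$-invariants. Since $|W|$ is invertible in $\Q$ (all groups in this section are $\Q$-vector spaces by convention), the Reynolds projector $e_W = |W|^{-1}\sum_{w\in W} w$ is an idempotent on $\Omega^T_*(X)$ with image $(\Omega^T_*(X))^W$. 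Composing $e_W$ with the inverse of the displayed isomorphism yields a retraction $s_X : \Omega^T_*(X) \to \Omega^G_*(X)$ with $s_X \circ r^G_{T,X} = \id$.

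For naturality of $s_X$ in $\sV_G$, I would unwind the construction of the $W$-action using a sequence of good pairs $\{(V_j,U_j)\}$ for $G$ as in Theorem~\ref{thm:NO-Niveu}, which are automatically good pairs for $T$ and for $N = N_G(T)$. The $W$-action on $\Omega^T_*(X)$ arises, after descent along the $W$-torsor $X \stackrel{T}{\times} U_j \to X \stackrel{N}{\times} U_j$, from the action of $N$ on the mixed spaces; a $G$-equivariant morphism $f: X \to Y$ induces $W$-equivariant morphisms of these mixed quotients, so $f_*$ and $f^*$ commute with $e_W$ at each finite level. Passing to niveau-filtered quotients and then to the inverse limit preserves this commutativity, giving naturality of the retraction. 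For the second statement, I would apply the first assertion to $G \stackrel{H}{\times} X$, which lies in $\sV_G$ by Lemma~\ref{lem:sch}, producing a split injection $\Omega^G_*(G \stackrel{H}{\times} X) \hookrightarrow \Omega^T_*(G \stackrel{H}{\times} X)$, and then combine it with the Morita isomorphism $\Omega^G_*(G \stackrel{H}{\times} X) \cong \Omega^H_*(X)$ of Proposition~\ref{prop:Morita} to obtain the desired split monomorphism.

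The only step requiring genuine verification rather than formal manipulation is the compatibility of the $W$-action on $\Omega^T_*(X)$ with the functoriality of $\Omega^T_*$ on $\sV_G$. This is the crux: one must trace the free $W$-action on $X \stackrel{T}{\times} U_j \to X \stackrel{N}{\times} U_j$ underlying Theorem~\ref{thm:W-inv} (via Lemma~\ref{lem:G-INV*C} and Proposition~\ref{prop:FG-action}) and check that $G$-equivariant morphisms intertwine it, after which taking the inverse limit commutes with taking $W$-invariants over $\Q$. This bookkeeping is the main, though relatively mild, obstacle.
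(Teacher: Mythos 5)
Your argument matches the paper's own proof: the paper splits $r^G_{T,X}$ via the trace map (which, over $\Q$ with $W$ finite, is the same averaging idempotent as your Reynolds projector $e_W$) and obtains the second assertion by combining the first with the Morita isomorphism of Proposition~\ref{prop:Morita}, exactly as you do. Your additional discussion of naturality via the $W$-action on the mixed quotients $X\stackrel{T}{\times}U_j \to X\stackrel{N}{\times}U_j$ is a correct elaboration of a point the paper leaves implicit.
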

\begin{proof}
The first statement follows directly from Theorem~\ref{thm:W-inv}, where
the splitting is given by the trace map. The second statement follows from 
the first and Proposition~\ref{prop:Morita}.
\end{proof} 

Before we draw some more consequences, we have the following topological 
analogue of Theorem~\ref{thm:W-inv}, which is much simpler to prove.
Recall from ~\eqref{eqn:ECompC} that if $G$ is a complex Lie group and
$X$ is a finite $CW$-complex with a $G$-action, then its {\sl equivariant
complex cobordism} is defined as
\begin{equation}\label{eqn:ECompC-R} 
MU^*_G(X) := MU^*\left(X \stackrel{G}{\times} EG\right).
\end{equation}

\begin{thm}\label{thm:W-inv-Top}
Let $G$ be a complex Lie group with a maximal torus $T$ and Weyl group $W$.
Then for any $X$ as above, the natural map 
\begin{equation}\label{eqn:W-inv1-T}
MU^*_G(X) \to {\left(MU^*_T(X)\right)}^W
\end{equation}
is an isomorphism.
\end{thm}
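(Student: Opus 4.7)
The plan is to deduce the theorem from the Leray--Hirsch theorem applied to the universal $G/T$-bundle. Since $EG$ is also a contractible free $T$-space (as $T \subset G$ is closed), we may take $BT = EG/T$, and the natural projection $X \times_T EG \to X \times_G EG$ is a fiber bundle with fiber $G/T$ on which $W$ acts along the fibers; equivalently, it is the pullback of $BT \to BG$ along the classifying map $X_G \to BG$.

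The classical Bruhat/Schubert decomposition endows $G/T$ with a CW structure whose cells are all of even real dimension (indexed by $W$), so $H^*(G/T;\Z)$ is a finitely generated free abelian group concentrated in even degrees. The Atiyah--Hirzebruch spectral sequence for $MU^*(G/T)$ therefore collapses integrally and produces a $W$-equivariant isomorphism $MU^*(G/T) \cong MU^* \otimes_{\Z} H^*(G/T;\Z)$ of free $MU^*$-modules with trivial $W$-action on the coefficient factor.

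Choosing an $MU^*$-basis of $MU^*(G/T)$ and lifting it to classes in $MU^*(X \times_T EG)$ (for instance via pullbacks of polynomials in the Conner--Floyd classes of the tautological line bundles on $BT$ whose fiber restrictions yield the Schubert basis in $MU^*$), the Leray--Hirsch theorem yields an isomorphism of $MU^*(X_G)$-modules
\[
MU^*(X \times_T EG) \cong MU^*(X \times_G EG) \otimes_{MU^*} MU^*(G/T),
\]
compatible with the $W$-action (trivial on the first factor, natural on the second). Since $MU^*(G/T)$ is free over $MU^*$, forming $W$-invariants commutes with the tensor product, giving
\[
\left(MU^*_T(X)\right)^W \;\cong\; MU^*_G(X) \otimes_{MU^*} MU^*(G/T)^W.
\]

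It remains to verify the integral identity $MU^*(G/T)^W = MU^*$. From the $W$-equivariant splitting $MU^*(G/T) = MU^* \otimes_{\Z} H^*(G/T;\Z)$ with trivial action on $MU^*$, this reduces to showing $H^*(G/T;\Z)^W = \Z$ concentrated in degree $0$. Because $H^*(G/T;\Z)$ is torsion-free, the invariants embed into $H^*(G/T;\Q)^W$; Borel's theorem identifies the rational cohomology of $G/T$ with the regular representation of $W$, so the rational invariants are $\Q$ concentrated in degree $0$, and intersecting with the integral lattice (which is $\Z$ in degree $0$ since $G/T$ is connected) gives the result. The principal obstacle is producing the $W$-equivariant Leray--Hirsch decomposition integrally; this is handled by exploiting the even-cell structure, which forces the AHSS for the total space $X \times_T EG$ to degenerate by comparison with the fiber AHSS (known to be $W$-equivariant), so the freeness and basis lifting go through over $\Z$ without inverting $|W|$.
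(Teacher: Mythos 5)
Your argument is pitched as an integral statement (``without inverting $|W|$''), but note first that by the standing convention at the beginning of Section~\ref{section:tori} all groups there are taken with rational coefficients, so Theorem~\ref{thm:W-inv-Top} only asserts the rational isomorphism; the paper proves it by reducing to reductive $G$ via the Levi decomposition and then comparing the rationally degenerate Atiyah--Hirzebruch spectral sequences $H^*_G(X,\Q)\otimes MU^*\Rightarrow MU^*_G(X)$ and $H^*_T(X,\Q)\otimes MU^*\Rightarrow MU^*_T(X)$ by means of Brion's isomorphism $H^*_G(X,\Q)\cong \left(H^*_T(X,\Q)\right)^W$. Your Leray--Hirsch route along the $G/T$-bundle $X\times_T EG\to X\times_G EG$ is a genuinely different and perfectly viable argument \emph{rationally}, but two of its steps fail integrally. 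First, the Leray--Hirsch hypothesis itself: already for $X=\mathrm{pt}$ and $G=PGL_2(\C)$ the lifts do not exist. Here $X\times_T EG=BT$, the fiber is $G/T\simeq \P^1$, and since the characters of $T\subset PGL_2$ are the \emph{even} characters of the $SL_2$-torus, the tautological line bundle of $BT$ restricts on the fiber to $\sO(\pm 2)$, whose Conner--Floyd class is $[\pm 2](x)=\pm 2x$ in $MU^2(\P^1)$. Hence the image of $MU^2(BT)\to MU^2(\P^1)=MU^2\oplus\Z x$ is $MU^2\oplus 2\Z x$, which contains no element whose $x$-component is a unit, so no family of classes on $X\times_T EG$ restricts to an $MU^*$-basis of $MU^*(G/T)$; in particular your parenthetical recipe via Conner--Floyd classes of characters is exactly what breaks for non-simply-connected $G$.

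Second, even granting an integral decomposition $MU^*_T(X)\cong MU^*_G(X)\otimes_{MU^*}MU^*(G/T)$, the step ``since $MU^*(G/T)$ is free, forming $W$-invariants commutes with the tensor product'' is not valid: taking invariants is a kernel, not a cokernel, and freeness of the second factor does not help. For a rank-one free summand on which a reflection acts by $-1$ (present already for $SL_2$, where the Weyl element sends $x\mapsto -x$), one has $\left(M\otimes_{MU^*}\text{summand}\right)^W={}_2M$, the $2$-torsion of $M=MU^*_G(X)$, whereas $M\otimes(\text{summand})^W=0$; since $MU^*(X\times_G EG)$ is not torsion-free in general, this identification genuinely needs $\Q$-coefficients (or a torsion-freeness hypothesis you have not supplied). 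Your closing claim that the even cells of the fiber force the AHSS of the total space $X\times_T EG$ to degenerate is also unjustified --- $X$ is arbitrary, so $X_T$ has no evenness --- but it is not needed for Leray--Hirsch. With $\Q$-coefficients all of these issues disappear: the fiber restriction $MU^*(BT)_{\Q}\to MU^*(G/T)_{\Q}$ is surjective, invariants commute with the tensor product because $\Q[W]$ is semisimple, and your computation $MU^*(G/T)^W_{\Q}=MU^*_{\Q}$ via the regular representation is correct; one should still say a word about $\varprojlim^1$ over the skeleta of the infinite base (harmless here since $MU^*(G/T)$ is a finitely generated free $MU^*$-module), recall that the Bruhat cells live on $G/B$ with $G/T\simeq G/B$ for complex $G$, and, as in the paper, reduce non-reductive $G$ to its Levi (or observe that $G/T$ is still homotopy equivalent to the flag variety of a Levi subgroup). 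With those repairs your proof establishes the rational statement of the theorem by a route different from the paper's.
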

\begin{proof} As in the proof of Theorem~\ref{thm:W-inv}, we can reduce to the
case when $G$ is reductive.
It follows from the above definition of the equivariant complex cobordism
and the similar definition of the equivariant singular cohomology of $X$,
plus the Atiyah-Hirzebruch spectral sequence in topology that there is
a spectral sequence 
\begin{equation}\label{eqn:W-inv2-T}
E^{p,q}_2 = H^p_G(X, \Q) \otimes_{\Q} MU^q \Rightarrow MU^{p+q}_G(X).
\end{equation}
Since the Atiyah-Hirzebruch spectral sequence degenerates rationally, we see
that the above spectral sequence degenerates too. Since one knows that
$H^*_G(X) \cong \left(H^*_T(X)\right)^W$ ({\sl cf.}
\cite[Proposition~1]{Brion1}), the corresponding result for the cobordism
follows.
\end{proof}

\begin{thm}\label{thm:ACC*}
For a connected linear algebraic group $G$ over $\C$,
the map $\rho^G: \Omega^*(BG) \to MU^*(BG)$ ({\sl cf.} 
Corollary~\ref{cor:Alg-Comp*}) is an isomorphism 
of $\bL$-algebras. In particular, the natural map
\[
CH^*(BG) \xrightarrow{{\ov{\rho}}^G} MU^*(BG) \widehat{\otimes}_{\bL} \Q
\]
is an isomorphism of $\Q$-algebras.
\end{thm}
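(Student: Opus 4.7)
The plan is to prove the first statement by reducing to a split maximal torus, where both sides are explicit graded power series rings, and then to deduce the Chow statement from Proposition~\ref{prop:CBCH} by tensoring. Throughout we are working with rational coefficients, as declared at the start of Section~\ref{section:tori}.

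First, using Proposition~\ref{prop:NRL} on the algebraic side and its topological counterpart (the retraction $BL \to BG$ is a rational equivalence since $G/L$ is contractible, where $L$ is a Levi subgroup), I would reduce to the case that $G$ is connected and reductive with split maximal torus $T$ and Weyl group $W$. By Corollary~\ref{cor:Alg-Comp*}, $\rho^G$ is natural for the inclusion $T \hookrightarrow G$, so we get a commutative square
\[
\xymatrix@C.6pc{
\Omega^*(BG) \ar[r]^{\rho^G} \ar[d]_{r^G_T} & MU^*(BG) \ar[d]^{r^G_T} \\
\Omega^*(BT) \ar[r]_{\rho^T} & \Omega^*(BT).
}
\]
Further, $\rho^T$ intertwines the Weyl group action on both sides, since that action is induced by the $N/T$-action on the mixed quotients $X\stackrel{T}{\times}U_j$ via the restriction from $N$, and $\rho^G$ is natural in $G$-equivariant maps. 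By Theorem~\ref{thm:W-inv} the left vertical arrow identifies $\Omega^*(BG)$ with $\Omega^*(BT)^W$, and by Theorem~\ref{thm:W-inv-Top} the right vertical arrow identifies $MU^*(BG)$ with $MU^*(BT)^W$. Consequently it suffices to show that $\rho^T$ is an isomorphism.

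Second, for the torus case I would use Proposition~\ref{prop:TorusC} to identify $\Omega^*(BT) \cong \bL[[t_1,\dots,t_n]]$ with $t_i=c_1^T(L_{\chi_i})$, and the classical computation (see \cite{Landweber} together with the good-pair inverse system of Theorem~\ref{thm:NO-Niveu}) to identify $MU^*(BT) \cong MU^*[[u_1,\dots,u_n]]$ with $u_i$ the complex cobordism first Chern class of the line bundle associated with $\chi_i$. Since $\rho^T$ is a morphism of oriented cohomology theories, it preserves first Chern classes of line bundles at each finite level $(BT)_j$, hence in the inverse limit it sends $t_i \mapsto u_i$; combined with the ring isomorphism $\bL \xrightarrow{\cong} MU^*$ of \eqref{eqn:A-C-C*}, this forces $\rho^T$ to be an isomorphism of graded topological power series rings. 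Taking $W$-invariants then yields the isomorphism $\rho^G$.

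Third, for the Chow statement, Proposition~\ref{prop:CBCH} applied to $X=\Spec(k)$ gives $CH^*(BG) \cong \Omega^*(BG)\widehat{\otimes}_{\bL}\Q$ (recall our rational convention). Applying $\widehat{\otimes}_{\bL}\Q$ to the isomorphism $\rho^G$ proved above yields the desired ring isomorphism $CH^*(BG) \xrightarrow{\cong} MU^*(BG)\widehat{\otimes}_{\bL}\Q$. The fact that this factors the cycle class map into $H^{2*}(BG,\Q)$ was already noted in Corollary~\ref{cor:Alg-Comp*}.

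The main obstacle is verifying the equivariance of $\rho^T$ for the Weyl group action so that the reduction to $W$-invariants on both sides is compatible; this requires unpacking the construction of $\rho^G$ in \eqref{eqn:Alg-Comp1} through the good-pair inverse systems, and observing that the $N$-action commutes with the universal map $\Omega^* \to MU^{2*}$ at each finite approximation $X\stackrel{T}{\times}U_j$ because that map is natural with respect to the $N/T$-action on these smooth mixed quotients. Everything else reduces to the matching of first Chern classes for equivariant line bundles in the two theories.
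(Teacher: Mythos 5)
Your proposal is correct and follows the paper's own route: reduce via Theorems~\ref{thm:W-inv} and \ref{thm:W-inv-Top} to the $W$-invariants of the torus case, where $\rho^T$ is an isomorphism by the explicit computation \eqref{eqn:BT1} (the paper cites this plus Totaro rather than re-deriving it, and is silent on the naturality/$W$-equivariance of $\rho$ that you spell out). One small typo: the bottom-right entry of your commutative square should read $MU^*(BT)$, not $\Omega^*(BT)$.
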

\begin{proof} To prove the first isomorphism, we can use 
Theorems~\ref{thm:W-inv} and ~\ref{thm:W-inv-Top} to reduce to the case of a
torus. But this is already known even with the integer
coefficients ({\sl cf.} \eqref{eqn:BT1} and \cite{Totaro1}).
The second isomorphism follows from the first and Proposition~\ref{prop:CBCH}.
\end{proof}

\begin{remk}\label{remk:ACC*R}
The map ${\ov{\rho}}^G: CH^*(BG) \to MU^*(BG) \widehat{\otimes}_{\bL} \Z$ was 
discovered by Totaro in \cite{Totaro1} even before Levine and Morel discovered 
their algebraic cobordism. It is 
conjectured that the map ${\ov{\rho}}^G$ is an isomorphism with the integer
coefficients for a connected complex algebraic group $G$. Totaro modified this 
conjecture to an expectation that  ${\ov{\rho}}^G$ should be an isomorphism 
after localization at a prime $p$ such that $MU^*(BG)_{(p)}$ is concentrated in
even degree. The above theorem proves the isomorphism in general with
the rational coefficients. We also remark that the map
$MU^*(BG) \widehat{\otimes}_{\bL} \Q \to H^*(BG)$ is an isomorphism
({\sl cf.} \cite{Totaro2}). The above result then shows that the cycle class
map for the classifying space is an isomorphism with the rational
coefficients. One wonders if the techniques of this paper could be applied
to the the algebraic version of the Brown-Peterson cobordism theory to
prove the Totaro's modified conjecture. 
\end{remk}

\end{document}